\newcommand{\tbf}{\textbf}
\numberwithin{equation}{section} 
\theoremstyle{plain}
\newtheorem{theorem}{Theorem}[section]
\newtheorem{lemma}[theorem]{Lemma}
\newtheorem{theoremdef}[theorem]{Theorem-Definition}
\newtheorem{corollary}[theorem]{Corollary}
\newtheorem{conjecture}[theorem]{Conjecture}
\newtheorem{definition}{Definition}[section]
\newtheorem{remark}{Remark}[section]
\title{On the canonical bundle formula and effective birationality for Fano varieties in char $p>0$}
\author{Xintong Jiang}
\address{Tsinghua University}
\email{xt-jiang21@mails.tsinghua.edu.cn}
\date{}
\begin{document}
\begin{abstract}
    In this paper, we give some results on the birational geometry of varieties of Fano type and boundedness problems in positive characteristic, including a result ensuring that boundedness is stable under normalizations, a version of canonical bundle formula which is easier to use, and the effective birationality of certain weak Fano varieties with good singularities, which is predicted by the BAB conjecture.
\end{abstract}
\maketitle
\markboth{Xintong Jiang}{On the canonical bundle formula and effective birationality for Fano varieties in char $p>0$}
\tableofcontents
\section{Introduction}
We work on an algebraically closed field $k$ of characteristic $p>0$. Many aspects of birational geometry in characteristic 0 are well understood, including a special case of MMP\cite{bchm}, the boundedness of certain varieties\cite{birkar2019antipluricanonicalsystemsfanovarieties}\cite{birkar2020singularitieslinearsystemsboundedness}, acc property for log canonical thresholds\cite{hacon2012acclogcanonicalthresholds} and canonical bundle formulas\cite{2021arXivpositivityofmoduli}. However, in positive characteristic, very few results are known, and most of them are up to dimension 3 even the resolution of singularities, which is very crucial for the study of birational geometry. The most hard part essentially comes from the failure of vanishing theorems and the lack of cognition of very special morphisms that appear only in positive characteristic like inseparable morphisms which destroy covering lemmas. In recent years, many positive characteristic results have been proved thanks to the development of F-singularities, including the MMP in dimension 3\cite{threedimmmp}\cite{birkar2014existenceflipsminimalmodels}\cite{birkar2014existencemorifibrespaces}, some weak versions of canonical bundle formulas\cite{Witaszek2017OnTC}\cite{benozzocanonicalbundleformula}, F-adjunctions\cite{schwede2009fadjunction}, and some weak versions of boundedness of certain varieties\cite{Das_2019}\cite{zhuang2020fanovarietieslargeseshadri}\cite{sato2024boundednessweakfanothreefolds}. A famous conjecture in birational geometry is the BAB conjecture, which predicts the boundedness of Fano varieties:
\begin{conjecture}[BAB Conjecture]
    Let $d$ be a natural number and $\epsilon>0$ be a real number, then the set of $\epsilon$-lc Fano varieties of dimension $d$ forms a bounded family.
\end{conjecture}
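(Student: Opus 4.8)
The plan is to follow the architecture of Birkar's proof in characteristic zero, importing whatever survives to char $p>0$ and isolating precisely the inputs that fail. Fix $d\in\mathbb{N}$ and $\epsilon>0$. The first and decisive reduction is to \emph{effective birationality}: one seeks an integer $m=m(d,\epsilon)$ so that $|-mK_X|$ defines a birational map for every $\epsilon$-lc Fano $X$ of dimension $d$. Granting this, a standard argument bounds the anticanonical volume $\mathrm{vol}(-K_X)$ from above in terms of $d,\epsilon$ (a suitable general member of $|-mK_X|$ together with the $\epsilon$-lc hypothesis controls intersection numbers), while a lower bound is automatic; thus $X$ varies in a family of weak Fanos with bounded volume and bounded (here $\epsilon$-lc) singularities, and one finishes by a boundedness statement for such families --- take a controlled multiple of $-K_X$ very ample, bound the Hilbert polynomial, and conclude by Noetherianity of the relevant Hilbert scheme. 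The present paper already provides effective birationality ``for certain weak Fano varieties with good singularities,'' so a cleaner sub-goal is to remove (or, in the ranges where they hold, simply accept) those hypotheses and check that the volume bound and the ensuing boundedness still go through.

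Concretely I would proceed in four steps. (1) Produce bounded $\mathbb{N}$-complements for $\epsilon$-lc Fano pairs of dimension $d$: an $n=n(d)$ and $\Delta^{+}\ge 0$ with $(X,\Delta^{+})$ lc and $n(K_X+\Delta^{+})\sim 0$; in char $p$ this rests on the theory of complements, hence on connectedness of non-klt loci and on vanishing-type inputs. (2) Using a bounded complement together with a non-klt centre created at a general point by perturbing a divisor in $|-mK_X|$, run the lifting-sections-from-a-non-klt-centre induction on dimension that manufactures $m(d,\epsilon)$; the characteristic-zero argument uses Nadel / Kawamata--Viehweg vanishing here. (3) From effective birationality deduce the two-sided bound on $\mathrm{vol}(-K_X)$. (4) Invoke boundedness of polarized pairs $(X,-mK_X)$ with bounded volume and $\epsilon$-lc (or F-pure / strongly F-regular) singularities; in char $p$ this again wants resolution of singularities and induction on Hilbert schemes, available unconditionally only in low dimension.

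The main obstacle is Step (2), together with the global cohomological inputs it needs: Kawamata--Viehweg and Nadel vanishing are false in char $p>0$, so the mechanism that lifts pluri-anticanonical sections off a non-klt centre --- and thereby drives the inductive construction of $m(d,\epsilon)$ --- has no general substitute. In dimension $\le 3$ one can hope to bypass it using the char $p$ minimal model program \cite{threedimmmp}, Cossart--Piltant resolution, and the recent F-singularity techniques (which already yield the partial effective-birationality and complement statements in the literature and in this paper), so the realistic unconditional target is the three-dimensional, sufficiently F-singular case; in higher dimensions even resolution of singularities is open and one should expect only a conditional result (assuming resolution and a suitable vanishing / MMP package). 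A secondary difficulty is that $\epsilon$-lc singularities do not obviously translate into the F-singularity hypotheses under which Steps (1) and (4) are presently available, nor are they known to behave well in families in char $p$; so controlling the specialization of $\epsilon$-lc singularities, or else propagating the ``good singularities'' caveat consistently through all four steps, is a genuine part of the work rather than a formality.
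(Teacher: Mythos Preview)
The statement you are attempting to prove is labelled as a \emph{conjecture} in the paper, and the paper does not claim to prove it. The author explicitly says that in positive characteristic the BAB conjecture ``is indeed widely open even for effective birationality'' in dimension $3$, and the paper's contributions are intermediate results \emph{towards} it: a canonical bundle formula in large characteristic, non-klt centre adjunction, and effective birationality only under the additional hypotheses that $X$ is strongly F-regular with Gorenstein index prime to $p$. There is no proof in the paper for you to compare against.

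Your proposal is not a proof but a research outline, and you yourself identify the fatal gap: Step~(2) relies on Nadel/Kawamata--Viehweg vanishing, which fails in characteristic $p$, and you write that this mechanism ``has no general substitute.'' That is correct, and it is precisely why the conjecture remains open. Your Step~(1) (bounded complements for $\epsilon$-lc Fanos) is likewise not known in char $p>0$ in dimension $\ge 3$; the paper's Corollary~1.4 gives only a non-klt centre adjunction under restrictive hypotheses, not a complement theorem. Your Step~(4) also presupposes that bounded anticanonical volume plus $\epsilon$-lc singularities gives boundedness, which in char $p$ is not established in the generality you need. Finally, as you note, the $\epsilon$-lc condition does not imply the strong F-regularity hypothesis under which the paper's effective birationality (Theorem~4.4) is proved, so even the partial result here does not feed into your scheme without further work. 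In short: the outline is a reasonable summary of what one would \emph{like} to do, but each step is currently open in positive characteristic, and the paper makes no claim otherwise.
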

The condition $\epsilon$-lc cannot be strengthened into klt, c.f.\cite[1.2]{birkar2020singularitieslinearsystemsboundedness}. This conjecture was proved in characteristic 0 by Birkar in \cite{birkar2019antipluricanonicalsystemsfanovarieties}\cite{birkar2020singularitieslinearsystemsboundedness} using Shokurov's theory of complements. In positive characteristic, the conjecture was proved in dimension 2 by Alexeev\cite{boundednessandk2} and in toric cases by the Borisov brothers\cite{Borisov1993SINGULARTF}. In dimension 3, the conjecture is indeed widely open even for effective birationality, there are few results including when the Gorenstein index is bounded and some other cohomological conditions are added\cite{sato2024boundednessweakfanothreefolds} and etc. The boundedness of the volume with bounded Gorenstein index is shown in \cite{Das_2019}. Though very few are known, and even many results of birational geometry in positive characteristic fails, this conjecture is still supposed to be true. In this paper, we study some intermediate results of this conjecture for Fano threefolds in positive characteristic.

Our first observation is that boundedness is stable under normalizations, which is not seen before in positive characteristic.
\begin{theorem}[Theorem 2.4]
    Let $k$ be an algebraically closed field, suppose that a family $\mathcal P$ of projective varieties $k$ is bounded, then the set $\mathcal P^\nu$ of normalization of the elements in $\mathcal P$ is also bounded.
\end{theorem}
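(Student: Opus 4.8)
The plan is to run a Noetherian induction on the parameter scheme, reducing at each step to a spreading-out statement about how normalization behaves generically in a family. The only genuinely positive-characteristic difficulty is that normalization commutes neither with base change nor with passage to fibers, so the naive idea of normalizing the total space and taking fibers does not work; this is circumvented by normalizing after base change to the algebraic closure of the generic function field and then descending the result.

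First fix a projective morphism $\mathcal{X}\to S$ of finite type $k$-schemes realizing the bounded family $\mathcal{P}$, so that every $X\in\mathcal{P}$ is isomorphic to $\mathcal{X}_s$ for some closed point $s\in S$. Since $X^\nu\cong(\mathcal{X}_s)^\nu$, it suffices to bound the set of $(\mathcal{X}_s)^\nu$ as $s$ runs over the closed points with $\mathcal{X}_s$ integral. A subfamily of a bounded family is bounded, a finite union of bounded families is bounded, and by a flattening stratification $S$ is a finite disjoint union of integral locally closed subschemes over which $\mathcal{X}$ becomes flat; since for a proper flat morphism the locus of geometrically integral fibers is open, a stratum whose generic fiber is not geometrically integral contains no closed point with integral fiber and may be discarded. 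Hence, by Noetherian induction on $\dim S$, it suffices to prove the following: \emph{if $S$ is integral of finite type over $k$ with function field $K$, $\mathcal{X}\to S$ is projective and flat, and $\mathcal{X}_K$ is geometrically integral, then there are a nonempty open $U\subseteq S$, a finite surjection $U'\to U$ with $U'$ of finite type over $k$, and a projective morphism $\mathcal{Z}\to U'$ such that $\mathcal{Z}_{s'}\cong(\mathcal{X}_s)^\nu$ whenever $s'\mapsto s$.} Indeed, applying this on each surviving stratum handles the closed points of a dense open subset, and one recurses on the lower-dimensional complement.

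For the generic construction let $\bar K$ be an algebraic closure of $K$ and put $Z:=(\mathcal{X}_{\bar K})^\nu$, a normal projective $\bar K$-variety; as $\mathcal{X}$ is of finite type over a field it is excellent (Nagata), so $Z\to\mathcal{X}_{\bar K}$ is finite, and it is birational since $\mathcal{X}_{\bar K}$ is integral. Writing $\bar K=\varinjlim K_\alpha$ as the filtered union of the finite subextensions of $K$ and using that $Z\to\mathcal{X}_{\bar K}$ is of finite presentation, the pair $(Z,\;Z\to\mathcal{X}_{\bar K})$ descends to some $K':=K_\alpha$: there is a $K'$-scheme $Z'$, finite and birational over $\mathcal{X}_{K'}$, with $Z'\times_{K'}\bar K\cong Z$. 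The crucial point is that $Z'$ is then \emph{geometrically} normal over $K'$, because the algebraic closure of $K'$ is again $\bar K$ and $Z'\times_{K'}\bar K=Z$ is normal; this is exactly the statement that would fail if one normalized $\mathcal{X}_K$ over $K$ itself, and it is the single place where inseparability matters. Now realize $K'$ geometrically by letting $S'$ be the normalization of $S$ in $K'$: then $S'$ is integral of finite type over $k$, the map $S'\to S$ is finite and surjective, and the generic fiber of $\mathcal{X}_{S'}:=\mathcal{X}\times_S S'\to S'$ is $\mathcal{X}_{K'}$.

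Finally, spread $Z'$ out to a projective morphism $\mathcal{Z}\to U'$ over a nonempty open $U'\subseteq S'$ with generic fiber $Z'$. After shrinking $U'$, the standard limit formalism — generic flatness; descent of the finiteness and of the birationality of $\mathcal{Z}\to\mathcal{X}_{U'}$; openness of the loci of geometrically normal and of geometrically integral fibers for a flat morphism of finite presentation — lets us assume that $\mathcal{Z}\to\mathcal{X}_{U'}$ is finite, that $\mathcal{Z}\to U'$ is flat with $\mathcal{X}_{s'}$ integral and $\mathcal{Z}_{s'}$ geometrically normal for all $s'\in U'$, and that $\mathcal{Z}_{s'}\to\mathcal{X}_{s'}$ is an isomorphism over a dense open subscheme of $\mathcal{X}_{s'}$. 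A finite birational morphism to an integral scheme from a normal scheme is the normalization, so $\mathcal{Z}_{s'}=(\mathcal{X}_{s'})^\nu$ for every $s'\in U'$. Letting $U\subseteq S$ be the image of $U'$, which is open and dense because $S'\to S$ is finite, hence closed, and surjective, any closed point $s\in U$ has a preimage $s'\in U'$ with $\kappa(s)=\kappa(s')=k$ (both schemes being of finite type over the algebraically closed field $k$), so $\mathcal{X}_{s'}\cong\mathcal{X}_s$ and $\mathcal{Z}_{s'}\cong(\mathcal{X}_s)^\nu$, as desired. The main obstacle, as flagged above, is precisely that the fiberwise normalization is not captured by any family obtained from $\mathcal{X}$ and $S$ alone; this forces the detour through $\bar K$ and the finite — possibly purely inseparable — extension $K'$, paid for by a finite cover of the base, while all the remaining ingredients (finiteness of normalization over a field, generic flatness, constructibility and openness of fiberwise geometric properties) hold verbatim in characteristic $p>0$.
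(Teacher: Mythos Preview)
Your proof is correct and shares the paper's overall architecture: Noetherian induction on the base, reduction to a statement about the generic fiber, and the endgame via openness of geometric normality in flat families (EGA IV 12.1.6). Where you diverge is in the mechanism for forcing the normalized generic fiber to be geometrically normal. The paper pulls back the total family along the iterated absolute Frobenius $F^e:\mathcal Z\to\mathcal Z$ and normalizes; an ascending-chain argument on conductor ideals shows the generic fiber becomes geometrically normal for $e\gg0$, and since $F^e$ is an isomorphism on $\operatorname{Spec} k$ for $k$ algebraically closed, closed fibers are literally unchanged. You instead normalize $\mathcal X_{\bar K}$ directly and descend to a finite subextension $K'/K$ by the limit formalism, then realize $K'$ geometrically as a finite cover $S'\to S$; closed fibers are again unchanged because closed points of $S'$ and $S$ both have residue field $k$. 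Your route is more conceptual and works uniformly—the only trace of characteristic $p$ is that $K'/K$ may be inseparable—whereas the paper's Frobenius argument is more explicit and makes the stabilization visible as a Noetherian phenomenon. Both pay the same price, a finite base modification invisible on closed $k$-points, and both feed into the same spreading-out conclusion.

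One small point: your sentence ``it suffices to bound $(\mathcal X_s)^\nu$ as $s$ runs over closed points with $\mathcal X_s$ integral'' silently assumes a prior reduction to integral members of $\mathcal P$; the paper makes this explicit by passing to irreducible components first. This is harmless but worth stating.
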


The next result is a more useful canonical bundle formula for Fano type fibrations up to dimension 3:
\begin{theorem}[Canonical bundle formula for threefolds, Theorem 3.3]
    Assume $(X,B)$ is a klt pair over an algebraically closed field of characteristic $p$, $f:X\to Z$ is a contraction with dim $X=3$ and dim $Z>0$, $K_X+B\sim_\mathbb Q 0/Z$, $B$ is relatively big, then we have:
    \begin{enumerate}
        \item if dim $Z=2$, let $\Phi\subset [0,1]\cap \mathbb Q$ be a DCC set. Suppose $B\in\Phi$, then there is a prime number $p_0=p_0(\Phi)$, such that for all $p>p_0$, the canonical bundle formula holds for $(X,B)/Z$.
        \item if dim $Z=1$ and general fibers of $X/Z$ are normal, let $I>0$ be a natural number. Then there is a $p_0=p_0(I)$ such that for all $p>p_0$, suppose $I(K_X+B)$ is Cartier near the generic fiber, the canonical bundle formula holds for $(X,B)/Z$.
        \item if dim $Z=1$ and general fibers of $X/Z$ are normal, let $\epsilon>0$ be a real number and  $\Phi\subset [0,1]\cap \mathbb Q$ be a finite set. Then there is a prime number $p_0=p_0(\epsilon,\Phi)$ such that for all $p>p_0$, suppose $B\in\Phi$ and $(X,B)$ is $\epsilon$-lc,  the canonical bundle formula holds for $(X,B)/Z$.
    \end{enumerate}
\end{theorem}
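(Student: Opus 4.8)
The plan is to establish, in each case, the two defining features of the canonical bundle formula for $f\colon X\to Z$: that the discriminant $\mathbb Q$-divisor $B_Z$ (computed from log canonical thresholds along divisors of $Z$) has controlled denominators, and that the moduli part $M_Z$ in $K_X+B\sim_{\mathbb Q} f^*(K_Z+B_Z+M_Z)$ is nef as a $b$-divisor. First I would pass to a standard model: replace $Z$ by a resolution and $X$ by a $\mathbb Q$-factorial log resolution over $Z$ (resolution of the $3$-fold $X$ being available, which already forces $p_0>5$, while $\dim Z\le 2$ causes no trouble), run a relative MMP, and arrange that $Z$ is smooth, the horizontal part of $B$ together with the non-snc locus of $f$ lies in an snc divisor, and $f$ is equidimensional over codimension-one points of $Z$. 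Since the statement is birational in nature it suffices to prove nefness of $M_Z$ on this model. The coefficient hypotheses --- DCC in (1), bounded Cartier index in (2), $\epsilon$-lc with finite $\Phi$ in (3) --- give, for $p\gg 0$, an ACC-type bound on the relevant log canonical thresholds, so $B_Z$ is a genuine $\mathbb Q$-divisor and, crucially, the ``type'' of the generic fibre $X_\eta$ is bounded.

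In part (1) the general fibre is a curve. For $p>p_0$ the generic fibre $X_\eta$ is a regular genus-zero curve whose pair is klt (care is needed here: geometric normality and klt-ness over the imperfect field $K(Z)$ can fail in small characteristic), and $\deg(K_{X_\eta}+B_\eta)=0$ with $B_\eta\neq 0$ by relative bigness forces $X_\eta\cong\mathbb P^1_{K(Z)}$ and $\deg B_\eta=2$ with coefficients in $\Phi$. DCC of $\Phi$ then leaves only finitely many possibilities for the multiset of weighted points on $X_\eta$. Those points are defined over a finite extension $L/K(Z)$ of bounded degree; let $Z'\to Z$ be the normalization in $L$. For $p>p_0(\Phi)$ this cover is tame, so $B_Z$ and $M_Z$ pull back by the classical ramification formula with no wild correction, and over $Z'$ the family of marked rational curves induces a moduli map to a fixed Hassett space $\overline M_{0,\mathbf b}$ of weighted stable rational curves. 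The moduli part is then the pullback of a fixed divisor class on this explicit rational variety (defined over $\mathbb Z$) that is nef, indeed semiample, in every characteristic; hence $M_{Z'}$, and therefore $M_Z$, is nef. This is in effect the curve-fibration canonical bundle formula in positive characteristic, the new point being uniform control of the fibre type and tameness of the base change.

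In parts (2) and (3) one has $\dim Z=1$, so $M_Z$ is a $\mathbb Q$-divisor on a curve and the only issue is $\deg M_Z\ge 0$. The general fibre $S$ is a normal surface and $(S,B|_S)$ is a klt Calabi--Yau pair with $B|_S$ big, i.e.\ a surface of Fano type, whose invariants --- the Cartier index $I$ in (2), the finite coefficient set together with $\epsilon$ in (3) --- are bounded. The key input is a semipositivity statement for $f_*\mathcal O_X\big(m(K_{X/Z}+B)\big)$ in the spirit of Fujita--Kawamata, valid in characteristic $p$ once $p$ is large relative to the invariants of the fibration; this is exactly where the hypotheses enter, the boundedness of those invariants in (2) and (3) producing a \emph{uniform} $p_0$. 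Granting nefness of this sheaf, $\deg M_Z\ge 0$ follows from the usual comparison of $m(K_X+B)$ with $f^*\big(m(K_Z+B_Z+M_Z)\big)$ together with a rank-one/determinant computation along the generic fibre. One then bounds $m$ --- by the Cartier index in (2), and by boundedness of Fano type surface pairs with bounded data in (3) --- so that $M_Z$ has bounded denominators and the formula holds over $\mathbb Q$.

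The main obstacle is the positive-characteristic semipositivity used in (2) and (3). Classical Fujita--Kawamata semipositivity rests on variation of Hodge structure, which has no counterpart in characteristic $p$; the known substitutes --- via Frobenius-stable sections, trace maps of the relative Frobenius, or $F$-singularity techniques --- degenerate for small $p$, which is precisely what forces the passage to $p>p_0$, and arranging that $p_0$ depends only on the discrete data (and not on $X$ itself) is what requires the DCC, bounded-index, or $\epsilon$-lc hypotheses. A secondary difficulty, already present in (1), is controlling how klt-ness and numerical triviality restrict to a general fibre over a possibly imperfect residue field --- a Bertini-type problem in positive characteristic --- and making the invocation of resolution of singularities for the $3$-fold $X$ legitimate.
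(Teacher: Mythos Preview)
Your overall architecture for part (1) is reasonable and not far from what actually underlies Witaszek's result, but the paper takes a much shorter route: it simply verifies, via a degree count, that for $p>3$ the geometric generic fibre is smooth rational and that for $p>2/\min(\Phi)$ every horizontal component of $B$ is separable over $Z$, so the general fibre of the pair is lc; it then invokes Witaszek's canonical bundle formula as a black box. Your Hassett-moduli argument is essentially a re-derivation of that black box and carries extra burdens (tameness of the base change, nefness of the tautological class in characteristic $p$) that the paper avoids.

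For parts (2) and (3) there is a genuine gap. You correctly isolate semipositivity of $f_*\mathcal O_X\big(m(K_{X/Z}+B)\big)$ as the crux, but then write ``granting nefness of this sheaf'' without a mechanism to grant it; the Frobenius-trace substitutes you allude to are precisely what is not known uniformly here. The paper's idea, which you are missing, is to sidestep semipositivity entirely by reducing to Benozzo's canonical bundle formula (base a curve, geometric generic fibre lc). The substance of the proof is therefore to show that for $p\gg0$ the generic fibre $(X_\eta,B_\eta)$ is \emph{geometrically} klt. This is achieved by (i) running an MMP to make $-K_{X_\eta}$ ample, so $X_\eta$ is an $\epsilon$-lc (with $\epsilon=1/I$) del Pezzo surface and hence lies in a bounded family over $\Spec\mathbb Z$ by Bernasconi et al.; (ii) using that boundedness to bound the residue-field degrees of the singular points and the arithmetic genera of exceptional curves on a minimal resolution, forcing separability of all the relevant extensions and hence geometric normality/snc of the resolution data once $p$ exceeds a bound depending only on $I$ (or on $\epsilon,\Phi$); and (iii) a parallel degree estimate ruling out inseparable horizontal components of $B$. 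Once the geometric generic fibre is lc, Benozzo's theorem gives the nef moduli part directly. In short, boundedness of del Pezzo surfaces replaces the Hodge-theoretic semipositivity you are reaching for, and that replacement is the heart of the argument.
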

With the canonical bundle formula, a priori, non-klt center adjunction can be developed, which suggests a tool to construct complements from the non-klt centers, which is a similar proposition of \cite[3.6, 4.9]{Das_2016}:
\begin{corollary}[Non-klt center adjunction, Theorem 3.5,3.6]
    Assume $(X,B)$ is a projective klt pair of dimension $3$ over an algebraically closed field $k$, $G\subset X$ a subvariety with normalization $F$, $X$ is $\mathbb Q$-factorial near $G$, $\Delta\geq 0$ a $\mathbb Q$-Cartier divisor and $(X,B+\Delta)$ is lc near $G$ and there is a unique non-klt place of this pair whose center is $G$. Let $\Phi$ be a finite set of rationals in $(0,1]$ such that $B\in\Phi$. Then there are a $p_0=p_0(\Phi)\in\mathbb N$ and a DCC set $\Psi=\Psi(\Phi)$ such that for all char$(k)>p_0$, there is a $\mathbb Q$-divisor $\Theta_F\in[0,1]$ such that the following hold:
    \begin{enumerate}
        \item There is $P_F$ on $F$ such that 
        \[K_F+\Theta_F+P_F\sim_\mathbb Q (K_X+B+\Delta)|_F,\]
        here $\Theta_F$ is well defined and $P_F$ is determined up to $\mathbb Q$-linear equivalence and $P_F$ is pseudo-effective.
        \item $\Theta_F\in\Psi$.
        \item $M\geq 0$ is a $\mathbb Q$-Cartier divisor on $X$ with coefficients $\geq 1$ and $G$ is not a subset of $\text{Supp}M$, then for every component $D$ of $M_F:=M|_F$, we have $\mu_D(\Theta_F+M_F)\geq 1$.
    \end{enumerate}
\end{corollary}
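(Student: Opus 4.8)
The plan is to reduce everything to the canonical bundle formula from Theorem 3.3. First I would take the unique non-klt place of $(X, B+\Delta)$ with center $G$ and pass to a log resolution; by the hypothesis of $\mathbb{Q}$-factoriality near $G$ and the uniqueness of the place, a standard tie-breaking argument lets me modify $\Delta$ (replacing it by $\Delta'$ that is $\mathbb{Q}$-linearly equivalent and still $\geq 0$ near $G$) so that $(X, B+\Delta')$ has a unique log canonical place over a neighborhood of $G$ and the induced divisor is ``plt-like'' along that place. Running an appropriate MMP (available in dimension $3$ in characteristic $p$, e.g. \cite{threedimmmp}\cite{birkar2014existenceflipsminimalmodels}) I extract a $\mathbb{Q}$-factorial dlt model $\pi\colon Y \to X$ on which the strict transform $S$ of the non-klt center's exceptional divisor is the unique component of the boundary with coefficient $1$ and $S \to G$ is a contraction, with $F$ the normalization of $G$. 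Write $K_Y + S + B_Y = \pi^*(K_X + B + \Delta')$.

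Next I would apply adjunction along $S$: $(K_Y + S + B_Y)|_S = K_S + B_S$ where $(S, B_S)$ is klt (this is the usual divisorial adjunction, valid in char $p$ for $3$-folds, cf.\ \cite{schwede2009fadjunction}), and then consider the contraction $g\colon S \to F$. Since $K_Y + S + B_Y \sim_\mathbb{Q} \pi^* (K_X+B+\Delta')$ is pulled back from $X$ and $G$ is a non-klt center, the restriction $K_S + B_S \sim_{\mathbb{Q}} 0/F$ is relatively trivial over $F$. To make $B_S$ relatively big over $F$ and to land in the hypotheses of Theorem 3.3, I would do a further small modification or use that $S$ dominates a point or a curve or a surface; in each case $\dim S \leq 2$, so I am applying Theorem 3.3 to a pair of dimension $\leq 3$ after possibly replacing $S$ by a higher-dimensional Fano-type fibration constructed over $F$ (the standard device: the coefficients of $B_Y$, hence of $B_S$, lie in a DCC/finite set depending only on $\Phi$ by divisorial adjunction, so the coefficient hypotheses of Theorem 3.3 are met). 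The canonical bundle formula then yields $K_F + \Theta_F + P_F \sim_\mathbb{Q} (K_S + B_S)|_{\text{moduli part}} \sim_\mathbb{Q} (K_X + B+\Delta)|_F$, with $\Theta_F$ the discriminant part — well-defined and with coefficients in a DCC set $\Psi = \Psi(\Phi)$ coming from the ACC/DCC bookkeeping in Theorem 3.3 — and $P_F$ the moduli part, which is pseudo-effective (nef in char $0$; in char $p$ one only knows pseudo-effectivity, which is exactly what Theorem 3.3's conclusion provides). The prime bound $p_0 = p_0(\Phi)$ is inherited from the $p_0$ in Theorem 3.3 together with the finitely many DCC/finiteness inputs.

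For part (3), the monotonicity statement, I would argue as in \cite[4.9]{Das_2016}: given $M \geq 0$ $\mathbb{Q}$-Cartier with coefficients $\geq 1$ and $G \not\subset \Supp M$, pull back to $Y$ and restrict to $S$. Because $G \not\subset \Supp M$, the divisor $M_S := \pi^*M|_S$ is a well-defined effective divisor whose coefficients are still $\geq 1$ along every component (it is the pullback of a Cartier-after-scaling divisor not containing the center). Then for a component $D$ of $M_F = M|_F$, the coefficient $\mu_D(\Theta_F)$ is computed, via the definition of the discriminant part, as $1 - (\text{log canonical threshold-type quantity for } (S, B_S + M_S) \text{ over the generic point of } D)$; since $M_S$ already has a component of coefficient $\geq 1$ mapping onto $D$, adding $M_S$ pushes the pair past log canonical over $D$, forcing $\mu_D(\Theta_F + M_F) \geq 1$. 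This is the place where one must be careful that the relevant adjunction and inversion-of-adjunction statements hold in positive characteristic in this dimension; since everything happens on surfaces over $F$, the needed results are available.

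The main obstacle, as always with canonical bundle formulas in characteristic $p$, is ensuring that the moduli part $P_F$ behaves well — here we only claim pseudo-effectivity, and that is precisely what Theorem 3.3 is engineered to give, so the real work is (a) verifying that the dlt/plt modification and the adjunction steps produce a fibration genuinely satisfying the hypotheses of Theorem 3.3 (klt total space, relative big boundary, relative $\mathbb{Q}$-triviality, coefficients in the right DCC or finite set, normal general fibers in the $\dim Z = 1$ subcase), and (b) controlling the coefficient set $\Psi$ uniformly in $\Phi$ through the chain: $B \in \Phi \leadsto B_Y$ coefficients in a DCC set $\leadsto B_S$ coefficients in a DCC set $\leadsto \Theta_F \in \Psi$. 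I expect (b) to be routine DCC bookkeeping and (a) to require the bulk of the care, particularly handling the inseparability issues that Theorem 3.3 already sidesteps by imposing $p > p_0$ and the normality of general fibers.
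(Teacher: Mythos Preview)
Your overall architecture---extract a dlt model $Y\to X$, let $S$ be the unique divisor over $G$, apply divisorial adjunction on $S$, then apply the canonical bundle formula to $S\to F$---is exactly what the paper does. But there is a genuine gap in how you control the coefficients of $\Theta_F$.

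You write $K_Y+S+B_Y=\pi^*(K_X+B+\Delta')$ and then assert that ``the coefficients of $B_Y$, hence of $B_S$, lie in a DCC/finite set depending only on $\Phi$.'' This is false: $B_Y$ is the crepant boundary of the pair $(X,B+\Delta)$, so it inherits contributions from $\Delta$, whose coefficients are completely unconstrained. Consequently the discriminant part of $(S,B_S)\to F$ has no reason to lie in a set depending only on $\Phi$. The paper resolves this by running \emph{two} boundaries in parallel. It sets $\Sigma_Y:=\widetilde{B}+\mathrm{exc}(\psi)$ (coefficients in $\Phi\cup\{1\}$, independent of $\Delta$) and lets $\Gamma_Y+N_Y$ be the full crepant boundary. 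Divisorial adjunction gives $\Sigma_S$ with coefficients in a DCC set $\Phi'=\Phi'(\Phi)$, and $\Theta_F$ is defined as the discriminant part of $(S,\Sigma_S)$, not of $(S,\Gamma_S+N_S)$. The full discriminant $D_F$ of $(S,\Gamma_S+N_S)$ satisfies $D_F\geq\Theta_F$ since $\Sigma_S\leq\Gamma_S+N_S$, and one sets $P_F:=D_F-\Theta_F+M_F$, which is pseudo-effective because $D_F-\Theta_F\geq 0$ and $M_F$ is the (pseudo-effective) moduli part. So $P_F$ is \emph{not} simply the moduli part of the canonical bundle formula; it absorbs the excess discriminant coming from $\Delta$.

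For part (3) your sketch is in the right spirit, but the paper's mechanism is again through the auxiliary boundary: one forms $\Sigma'_Y=\Sigma_Y+M_Y$ and shows that $\mu_D(\Theta_F+M_F)=\mu_D(\Theta'_F)$ where $\Theta'_F$ is the discriminant of $(S,\Sigma'_S)$; since $M$ has coefficients $\geq 1$, divisorial adjunction forces the components of $h^*D$ to appear in $\lfloor\Sigma'_S\rfloor$, whence $\mathrm{lct}_\eta(h^*D,S,\Sigma'_S)\leq 0$. Your version, computing thresholds for $(S,B_S+M_S)$, would again run into the problem that $B_S$ is not the object whose discriminant defines $\Theta_F$. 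Finally, your tie-breaking step (replacing $\Delta$ by $\Delta'\sim_{\mathbb Q}\Delta$) is unnecessary and not what the paper does; the dlt model is produced directly by running an MMP on $(W,\Gamma_W)/X$ with $\Gamma_W$ the reduced-exceptional modification of the log resolution boundary.
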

With the positivity of the moduli part, one can also see that a contraction of a Fano type variety is also Fano type under certain conditions.
\begin{corollary}[Corollary 3.7]
    Assume $X$ is a threefold of Fano type, $X\to Z$ is a contraction, and dim $Z>0$ with normal general fibers. Suppose there is an ample $\mathbb Q$-divisor $A$ on $Z$ and some given $I>0$ such that $I(K_X+B)\sim -If^*A$ are Cartier divisors for some klt pair $(X,B)$, then there is a $p_0=p_0(I)$ such that for every $p>p_0$, such $Z$ is of Fano type.
\end{corollary}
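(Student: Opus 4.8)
The plan is to reduce the statement to a single application of the canonical bundle formula of Theorem 3.3, after verifying its hypotheses and pinning down the prime $p_0$ in terms of $I$, and then to read off the Fano type property of $Z$ from the formula together with the ampleness of $A$. One case is special: if $\dim Z=3$ then $f$ is birational, and I would dispose of it directly, since it is well known that a birational contraction of a variety of Fano type is again of Fano type, with no restriction on $p$. So from now on I assume $\dim Z\in\{1,2\}$, i.e.\ $f$ is a genuine fibration.

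The first substantive point is that $B$ is automatically big over $Z$, even though this is not stated verbatim in the corollary. For a general closed fibre $X_z$ of $f$ one has $K_X|_{X_z}\sim_\mathbb{Q} K_{X_z}$ and $(f^*A)|_{X_z}\sim_\mathbb{Q} 0$, so $-(K_X+B)\sim_\mathbb{Q} f^*A$ gives $B|_{X_z}\sim_\mathbb{Q}-K_{X_z}$; and if $D\ge 0$ is a log Fano boundary of $X$ (so $(X,D)$ is klt and $-(K_X+D)$ is big and nef), then $-K_{X_z}\sim_\mathbb{Q}\bigl(-(K_X+D)\bigr)|_{X_z}+D|_{X_z}$ is big, being the restriction of a big divisor to a general fibre plus an effective divisor. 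Hence $B|_{X_z}$ is big, $B$ is big over $Z$, and all the standing hypotheses of Theorem 3.3 on $(X,B)/Z$ hold. Next I would feed in the data that controls $p_0$: since $I(K_X+B)\sim-If^*A$ is Cartier, in the case $\dim Z=1$ the divisor $I(K_X+B)$ is Cartier near the generic fibre and the general fibres are normal, so part (2) of Theorem 3.3 applies with the given $I$ and gives $p_0=p_0(I)$; in the case $\dim Z=2$, Cartierness of $I(K_X+B)$ forces every coefficient of $B$ to lie in $\tfrac1I\mathbb{Z}$ (since $I(K_X+B)$ and $IK_X$ are integral Weil divisors, so is $IB$), so $B$ lies in the finite, in particular DCC, set $\Phi_I:=\tfrac1I\mathbb{Z}\cap[0,1]$, and part (1) applies with $\Phi=\Phi_I$, giving $p_0=p_0(\Phi_I)=p_0(I)$.

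For $p>p_0(I)$ the canonical bundle formula then holds for $(X,B)/Z$: there are a boundary $B_Z$ with $(Z,B_Z)$ klt and a nef $\mathbb{Q}$-Cartier divisor $M_Z$ on $Z$ such that $K_X+B\sim_\mathbb{Q} f^*(K_Z+B_Z+M_Z)$. Comparing with $K_X+B\sim_\mathbb{Q}-f^*A$, and using that $f^*$ is injective on $\mathbb{Q}$-Cartier $\mathbb{Q}$-divisor classes (projection formula and $f_*\mathcal O_X=\mathcal O_Z$), I obtain $K_Z+B_Z+M_Z\sim_\mathbb{Q}-A$, hence $-(K_Z+B_Z)\sim_\mathbb{Q} A+M_Z$. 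Since $A$ is ample and $M_Z$ is nef, $A+M_Z$ is ample; thus $(Z,B_Z)$ is klt with $-(K_Z+B_Z)$ ample, which says precisely that $Z$ is of Fano type.

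The step I expect to be the main obstacle is making precise the exact output of ``the canonical bundle formula holds'' as defined before Theorem 3.3 --- in particular that $(Z,B_Z)$ is klt rather than merely log canonical, and that $M_Z$ is genuinely nef --- and, more minor, confirming that in the case $\dim Z=2$ the prime $p_0$ coming from the DCC set $\Phi_I$ depends only on $I$. If the formula only provides a pseudo-effective $M_Z$, I would instead take a Zariski-type decomposition $M_Z=P+N$ with $P$ nef and $N\ge 0$, absorb a small multiple of $N$ into the boundary on $Z$, and run a short MMP to restore nefness; and if $(Z,B_Z)$ is only log canonical, a standard perturbation using the ample class $A+M_Z$ replaces it by a klt log Fano boundary. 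The remaining verifications are routine.
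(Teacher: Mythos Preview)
Your proposal is correct and follows essentially the same route as the paper: dispose of the birational case, apply Theorem~3.3 to get $K_X+B\sim_\mathbb{Q} f^*(K_Z+B_Z+M_Z)$, compare with $-f^*A$, and conclude $-(K_Z+B_Z)\sim_\mathbb{Q} A+M_Z$ is ample. The paper handles the point you flag as the main obstacle --- that $(Z,B_Z)$ is klt rather than merely lc --- in one line: each coefficient of $B_Z$ is $1-\mathrm{lct}_\eta(X,B,f^*D)$, which is strictly less than $1$ because $(X,B)$ is klt (after passing to a high resolution $X'\to Z'$ so that the formula is read off on smooth models); no perturbation or MMP on $Z$ is needed, and the nefness of $M_Z$ is part of the output of the formula as stated in Conjecture~3.2 since $(X,B)$ is lc. Your verification that $B$ is big over $Z$ and your identification of $\Phi_I=\tfrac1I\mathbb{Z}\cap[0,1]$ in the $\dim Z=2$ case are more explicit than the paper, which simply invokes Theorem~3.3 without spelling these checks out.
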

Besides the results above, the next result in this paper is the effective birationality for Fano varieties with a bounded Gorenstein index and good F-singularities:
\begin{theorem}[Effective birationality, Theorem 4.4]
Let $d,I\in\mathbb N$ be natural numbers where $I$ is not a multiple of $p$, then there is $n,m\in\mathbb N$ depending only on $d,I$, such that if $X$ is a strongly F-regular weak Fano variety of dimension $d$ and $IK_X$ is a Cartier divisor, then $vol(-nK_X)>(2d)^d$ and $|-mK_X|$ defines a birational map.
\end{theorem}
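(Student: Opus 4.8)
The plan is to treat the two conclusions separately. The volume bound is essentially formal: as $X$ is weak Fano, $-K_X$ is nef and big, so $-IK_X$ is a nef and big Cartier divisor and $(-IK_X)^d$ is a positive integer; hence $\operatorname{vol}(-K_X)=(-IK_X)^d/I^d\ge 1/I^d$, so $\operatorname{vol}(-nK_X)=n^d\operatorname{vol}(-K_X)\ge (n/I)^d>(2d)^d$ once $n>2dI$, and we set $n:=2dI+1$. Two remarks will be used freely below: strong F-regularity makes $X$ klt, and since $IK_X$ is Cartier every log discrepancy of $X$ lies in $\tfrac1I\mathbb Z_{>0}$, so $X$ is $\tfrac1I$-lc; and since $-K_X$ is big and nef, strong F-regularity of $X$ promotes to \emph{global} F-regularity of $X$.

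For birationality it suffices to produce $m=m(d,I)$ such that $|-mK_X|$ separates a general pair of distinct closed points $x,y$: the induced rational map is then defined on a dense open set (as $-mK_X$ is big and has sections) and generically injective, hence birational onto its image. Following the standard create-singularities-and-lift strategy — with multiplier ideals replaced by non-test ideals, log canonical by sharply $F$-pure, and Kawamata--Viehweg/Nadel vanishing replaced by Frobenius vanishing on the globally F-regular $X$ — one proceeds as follows. Both $x,y$ are smooth points of $X$. From $\operatorname{vol}(-nK_X)>(2d)^d$ and $h^0(-\ell nK_X)=\operatorname{vol}(-nK_X)\ell^d/d!+o(\ell^d)$, for $\ell\gg0$ there is a section of $\mathcal O_X(-\ell nK_X)$ vanishing to order $>2d\ell$ at $x$ (and likewise at $y$); rescaling produces an effective $\mathbb Q$-divisor of multiplicity $>2d$ at $x$ in the class $-nK_X$. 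A tie-breaking/perturbation argument then yields, for a suitably large $m=m(d,I)$, an effective $0\le\Delta\sim_{\mathbb Q}\mu(-K_X)$ with $\mu<m+1$, such that $-mK_X-(K_X+\Delta)\sim_{\mathbb Q}-(m+1-\mu)K_X$ is big and nef, $(X,\Delta)$ is sharply $F$-pure near $x$ but not strongly F-regular there, and has a \emph{unique} minimal non-$F$-pure centre $W\ni x$; the same is arranged for a minimal centre through $y$.

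If $\dim W=0$ one lifts directly: global F-regularity of $X$ gives the Frobenius-stable vanishing that makes $H^0(X,\mathcal O_X(-mK_X))\to H^0(\{x\},\mathcal O_X(-mK_X)\otimes k(x))$ surjective, the twist $-mK_X-(K_X+\Delta)$ being big and nef; doing this at $x$ and at $y$ separates them. If $\dim W>0$ one restricts $(X,\Delta)$ to the normalization $W^\nu$ via $F$-adjunction and $F$-inversion of adjunction~\cite{schwede2009fadjunction}, obtaining $(K_X+\Delta)|_{W^\nu}\sim_{\mathbb Q}K_{W^\nu}+\Theta_W$ with $\Theta_W$ an honest effective $\mathbb Q$-boundary and no uncontrolled moduli contribution — it is exactly here that $p\nmid I$ enters, guaranteeing that the non-$F$-pure ideal and its restriction respect the prime-to-$p$ Cartier index — and, after checking that $-K_X$ restricted along $W$ retains enough positivity (a restricted-volume estimate), one iterates the multiplicity/threshold construction on $W^\nu$ to cut $\dim W$ down, reaching a point after finitely many steps. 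Tracking the constants through the whole construction produces the claimed $m=m(d,I)$.

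I expect the main obstacle to be this dimension-reduction/induction, and in particular making all the Frobenius substitutes uniform in $p$: one must push strong F-regularity of $X$ through $F$-adjunction so that the lower-dimensional centres keep controlled singularities while their anticanonical restricted volumes stay large, with neither the MMP nor vanishing theorems available in this generality. The two hypotheses are tailored to this: $p\nmid I$ keeps the prime-to-$p$ index intact under restriction (so $F$-adjunction behaves like ordinary adjunction), and strong F-regularity together with $-K_X$ big and nef furnishes the global F-regularity that stands in for Nadel vanishing when lifting sections.
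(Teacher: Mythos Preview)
Your overall strategy matches the paper's: bound $\operatorname{vol}(-nK_X)$ via integrality of $(-IK_X)^d$, build a sharply $F$-pure pair $(X,\Delta)$ with a minimal $F$-pure centre $W\ni x$ from a high-multiplicity divisor, cut $\dim W$ down using a restricted-volume estimate together with $F$-adjunction, and lift a separating section when $W=\{x\}$ via test-ideal vanishing. The paper packages the cutting step as its Lemma~4.3 and the lifting step as its Theorem-Definition~4.1 on $F$-potentially birational divisors, but the architecture is the same as yours.

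Two places where your outline would break if read literally. First, ``doing this at $x$ and at $y$ separates them'' is not enough: producing a section nonzero at each point separately does not separate the pair. One must carry $y$ through the entire construction --- arrange $(X,\Delta)$ to remain \emph{not} strongly $F$-regular at $y$ at every stage of the cutting (the paper's Lemma~4.3 explicitly preserves this) --- so that the final lift is nonzero on the isolated component $\{x\}$ of the non-$F$-regular locus and vanishes on the component through $y$. Second, ``iterate the multiplicity/threshold construction on $W^\nu$'' is dangerous phrasing: $W^\nu$ is in general neither strongly $F$-regular nor weak Fano with index dividing $I$, so the statement cannot be applied inductively to $W^\nu$. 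The paper's induction never leaves $X$: one lifts a high-multiplicity-at-$x$ section from $W$ back to $X$ (via Serre vanishing for $\mathcal O_X(mH)\otimes I_W$, with $H=-nK_X$) and perturbs $\Delta$ on $X$ to shrink the $F$-pure centre there. The only input from $W$ is that $-IK_X|_W$ is Cartier and ample, which bounds the multiple $l$ needed to make $\operatorname{vol}(-lnK_X|_W)>k^k$ --- this is your ``restricted-volume estimate'', and it is where the hypothesis on $I$ actually enters, not (as you suggest) in making the non-$F$-pure ideal restrict well.
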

As a corollary, if the condition of boundedness of log discrepancy is replaced by that of F-signature, which is a quantity that describes the wildness of the Frobenius map, i.e. F-singularities, which is thought to be a closed analogue of MMP singularities in positive characteristic, then the effective birationality holds:
\begin{corollary}[Corollary 4.5]
    Assume $X$ is $\epsilon$-strongly F-regular weak Fano variety of dimension $d$ and the Gorenstein index is not divisible by $p$, then there is a constant $m=m(d,\epsilon)$ such that $|-mK_X|$ defines a birational map.
\end{corollary}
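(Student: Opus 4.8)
The plan is to deduce Corollary 4.5 from Theorem 4.4 by bounding the Gorenstein index of $X$ in terms of $d$ and $\epsilon$ alone; since the index is assumed prime to $p$, the bound will in fact involve only $\epsilon$. So the one substantive point to establish is: if $(R,\mathfrak m)$ is a strongly F-regular local ring with F-signature $s(R)\geq\epsilon$, and $R$ is $\mathbb Q$-Gorenstein of index $I_{\mathfrak m}$ with $p\nmid I_{\mathfrak m}$, then $I_{\mathfrak m}\leq 1/s(R)\leq 1/\epsilon$.

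I would justify this as a now-standard consequence of the theory of F-signature and local fundamental groups (Carvajal-Rojas--Schwede--Tucker). Because $p\nmid I_{\mathfrak m}$, the index-one (canonical) cover $Y\to\Spec R$ -- the cyclic cover of degree $I_{\mathfrak m}$ determined by a trivialization of $I_{\mathfrak m}K_R$ -- is connected, normal, and étale over the Gorenstein locus of $R$; since that locus contains the regular locus $U\subseteq\Spec R$ and the complement of $U$ has codimension $\geq 2$, the restriction $Y|_U\to U$ is a connected étale cover with Galois group $\mathbb Z/I_{\mathfrak m}\mathbb Z$. Hence $\mathbb Z/I_{\mathfrak m}\mathbb Z$ is a quotient of $\pi_1^{\mathrm{et}}(U)$, and the Carvajal-Rojas--Schwede--Tucker bound on the local étale fundamental group of a strongly F-regular singularity gives $I_{\mathfrak m}\leq|\pi_1^{\mathrm{et}}(U)|\leq 1/s(R)\leq 1/\epsilon$. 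Applying this at every closed point of $X$, each local Gorenstein index is $\leq 1/\epsilon$, so the global Gorenstein index $I$ of the weak Fano $X$ -- the least common multiple of the local ones -- satisfies $I\leq L(\epsilon):=\mathrm{lcm}\{1,2,\dots,\lfloor 1/\epsilon\rfloor\}$.

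The conclusion is then immediate: $X$ is strongly F-regular, weak Fano of dimension $d$, $IK_X$ is Cartier, and $I$ is prime to $p$ by hypothesis, so Theorem 4.4 produces $m(d,I)\in\mathbb N$ with $|-m(d,I)K_X|$ birational; since $I$ lies in the finite set $\{1,\dots,L(\epsilon)\}$, the constant $m(d,\epsilon):=\max\{m(d,I):1\leq I\leq L(\epsilon)\}$ depends only on $d$ and $\epsilon$ and works. The only non-formal step is the index estimate, and that is where I would take care: the hypothesis $p\nmid I$ is exactly what permits the reduction to an étale-in-codimension-one cover -- a $p$-part of the index would correspond to a wild, possibly inseparable cover that the F-signature does not control -- and one should apply the fundamental-group bound after completing (or strictly henselizing) at the point, which changes neither strong F-regularity, nor the F-signature, nor the Gorenstein index. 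Everything after the index bound is a purely formal invocation of Theorem 4.4 together with finiteness of the range of possible indices.
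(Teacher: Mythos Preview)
Your proposal is correct and follows essentially the same approach as the paper: bound the Gorenstein index of $X$ purely in terms of $\epsilon$, then invoke Theorem~4.4. The paper does this in a single line by citing a result to the effect that $\lfloor\tfrac{1}{\epsilon}+1\rfloor!\,K_X$ is Cartier, whereas you spell out the underlying mechanism via the index-one cover and the Carvajal-Rojas--Schwede--Tucker bound on the local \'etale fundamental group; the mathematical content is the same.
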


Here we give the outline of the whole paper: In Chapter 2, we introduce preliminary knowledge we need in the paper, also we prove Theorem 1.2; in Chapter 3, we prove Theorem 1.3, Corollary 1.4 and Corollary 1.5 and introduce the F-adjunctions; in Chapter 4 we prove Theorem 1.6 and Corollary 1.7. Many of the ideas in this article are modified and retrofitted from \cite{birkar2019antipluricanonicalsystemsfanovarieties} and some other papers on birational geometry in positive chracteristic. If some work’s proof is absolutely characteristic free but the known reference only proves the 0-chracteristic case, we will refer it directly. We believe that there should be further and deeper work on the boundedness of algebraic varieties in positive characteristic in the future.

I would like to thank my supervisor Caucher Birkar for his patient guidance and suggestions. I also would like to thank Mao Sheng, Paolo Cascini, Jihao Liu, Fulin Xu, Marta Benozzo, Xiaowei Jiang for useful discussions and comments while writing this paper.
\section{Preliminaries}
In this section, some basic notions and results of birational geometry are mentioned for readers who are not familiar with. In this paper, all varieties are quasi-projective andreduced schemes of finite type over an algebraically closed field $k$ of characteristic $p>0$ and the ambient variety $X$ is projective unless stated otherwise. 
\paragraph{\tbf{Resolution of singularities}}
For a variety $X$, a resolution of singularity is a proper birational map $f:Y\to X$ from a smooth variety $Y$, which is an isomorphism on the regular locus of $X$, and for the singular locus $X_{sing}$, one has $f^{-1}X_{sing}$ is a divisor with simple normal crossings. 
\begin{theorem}
    For a 3-dimensional variety $X$, there is a resolution of singularity $f:Y\to X$ which is obtained by a sequence of blow-ups along smooth centers over $X_{sing}$.
\end{theorem}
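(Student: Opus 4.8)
This is one of the deepest known results of positive-characteristic birational geometry, so the plan is not to reprove it but to indicate how it is assembled from the literature. Resolution of singularities in dimension $\le 3$ over a field is due to Abhyankar in characteristic $p>5$ (with later simplifications by Cutkosky and others) and was extended to all characteristics by Cossart and Piltant; over our algebraically closed $k$ this is fully settled. One first reduces to the case where $X$ is normal: replace $X$ by its normalization $X^\nu\to X$ — a finite morphism that is an isomorphism over the normal, hence in particular over the regular, locus, with non-isomorphism locus contained in $X_{sing}$ — and then work on connected components so that $X$ may be taken irreducible.

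The structural idea behind the cited existence result is Zariski's valuative strategy. For every valuation $v$ of the function field $k(X)$ one produces, by an explicit local analysis, a projective model $X_v\to X$ on which the center of $v$ is regular (\emph{local uniformization}); since the Riemann--Zariski space of $k(X)$ is quasi-compact, finitely many such $X_v$ suffice, and a patching argument then produces a single regular projective model dominating $X$. This is precisely where the genuinely positive-characteristic obstructions live — defect extensions, wild ramification, and the failure of the characteristic-$0$ induction via hypersurfaces of maximal contact — and it is by far the main obstacle; its resolution in dimension $3$ is the content of the Abhyankar and Cossart--Piltant work we quote.

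It remains to upgrade a bare resolution $Y_0\to X$ into the stated form: $Y$ regular, $f^{-1}(X_{sing})$ a simple normal crossing divisor, and $f$ a composite of blow-ups along smooth centers mapping into $X_{sing}$. For this one invokes the \emph{embedded} (strong) version of resolution, also available in dimension $\le 3$: principalize the ideal sheaf $\mathcal{I}$ defining $X_{sing}$ with its reduced structure by a sequence of blow-ups along regular centers, at each stage blowing up inside the locus where the total transform of $\mathcal{I}$ is not yet simple normal crossing. That locus is contained in the preimage of $X_{sing}$, so every center lies over $X_{sing}$ and $f$ is automatically an isomorphism over $X\setminus X_{sing}$, and at the end $f^{-1}(X_{sing})$ is a simple normal crossing divisor on the regular variety $Y$; since $k$ is perfect, ``regular'' coincides with ``smooth over $k$'', so the centers are smooth. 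The only part that is bookkeeping rather than citation is checking that the centers remain over $X_{sing}$, which is immediate from the choice of locus above — everything difficult is contained in the cited resolution theorems.
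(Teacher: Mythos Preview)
Your proposal is correct and takes essentially the same approach as the paper: both treat this as a black box and defer to the literature (Cutkosky, Cossart--Piltant), the paper doing so in a single sentence of citations while you add an expository sketch of the Zariski valuative strategy and the embedded upgrade. There is no substantive mathematical difference to compare.
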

\begin{proof}
    See \cite{Cutkosky2004ResolutionOS}, \cite{Cossart2008ResolutionOS} and \cite{Cossart2009RESOLUTIONOS}.
\end{proof} 
\paragraph{\tbf{Divisors, pairs and linear systems}} We define $\mathbb N=\mathbb Z^{\geq0}$. Divisors refer to Weil divisors, which correspond to reflexive sheaves of rank 1 up to linear equivalence. The notion of Cartier divisors follows the usual definition, which correspond to invertible sheaves or line bundles up to linear equivalence. For a variety $X$, the dualizing sheaf $\omega_X$ denotes to be the lowest cohomology sheaf of its dualizing complex. $\omega_X$ is a reflexive sheaf, which corresponds to a divisor $K_X$ up to linear equivalence, which is called the canonical divisor of $X$. 

For $\mathbb F=\mathbb Q,\mathbb Z_{(p)}=(\mathbb Z-p\mathbb Z)^{-1}\mathbb Z$, $\mathbb F$-divisors are the $\mathbb F$-linear combination of divisors and $\mathbb F$-linear equivalences between divisors are generated by $\mathbb F$-linear combination of linear equivalences. A similar definition applies for Cartier divisors. An $\mathbb F$-divisor is $\mathbb F$-Cartier if it is $\mathbb F$-linear equivalent to an $\mathbb F$-Cartier divisor. $(X,B)$ is called a sub pair if $X$ is a normal variety and $B$ is a $\mathbb Q$-divisor such that $K_X+B$ is $\mathbb Q$-Cartier and $B\leq 1$ (in coefficients).  A sub pair $(X,B)$ is called a pair if $B\geq 0$. Here, $B$ is called a boundary if $(X,B)$ is a pair.

For an $\mathbb F$-divisor $M$, we often denote $H^0(M):=H^0(X,\mathcal O_X(\lfloor M\rfloor))$. The linear system $|M|=Proj(H^0(M))=\{N\sim M,N\geq0\}$. The $\mathbb F$-linear system is defined as $|M|_\mathbb F:=\{N\sim_\mathbb F M,N\geq0\}$, in particular $|M|_\mathbb Q=\bigcup\limits_{m\in\mathbb N} \frac{1}{m}|mM|$. The base locus $Bs(|M|)$ denotes the maximal closed subset of $X$ contained in each $N\in|M|$, the stable base locus is defined as $Bs(|M|_\mathbb Q):=\bigcap\limits_{m\in\mathbb N} Bs(|mM|)$. If $|M|\neq\emptyset$, $|M|$ will define a rational map $\phi_M:X\dashrightarrow |M|^\lor\simeq \mathbb P^n$, which is determined on $X-Bs(|M|)$ by mapping $x$ to the hyperplane in $H^0(M)$ whose elements are those $N\sim M$ which passes $x$. 

\paragraph{\tbf{B-divisors and generalized pairs}}

A b-divisor $\mathbf M$ over $X$ is an $\mathbb Q$-Cartier divisor $M_Y$ on a birational model $f:Y\to X$ projective over $X$, up to the equivalence generated by pull-backs along birational models projective over $X$. Set $M:=f_*M_Y$, one often uses $M$ to represent the b-divisor $\mathbf M$ for convenience.

A generalized pair is given as $(X',B'+M')/Z$ where $X'$ is a normal variety with a projective morphism $X'\to Z$, $B'\geq0$ an $\mathbb Q$-divisor (usually $B'\leq 1$) on $X'$ and a b-$\mathbb Q$-Cartier $b$-$\mathbb Q$-divisor $M'$ represented by some projective birational morphism $\phi:X\to X'$ and an $\mathbb Q$-Cartier $\mathbb Q$-divisor $M$ on $X$ such that $M$ is nef over $Z$ and $M'=\phi_*M$ and $K_{X'}+B'+M'$ is $\mathbb Q$-Cartier. Since $M'$ is defined birationally, one may assume that $X\to X'$ is a log resolution. $M$ is viewed as a b-divisor in generalized pairs.

\paragraph{\tbf{Singularities from MMP}}
For a prime divisor $D$ on a log resolution $W/X$ of the (resp. sub-) pair $(X,B)$, let $K_W+B_W$ be the pullback of $K_X+B$, the log discrepancy $a(D,X,B)$ defines to be $1-\mu_DB_W$. The log discrepancy is a number defined up to strict transformations along the birational maps between smooth models of $X$. One say the (resp. sub-)pair $(X,B)$ is (resp.sub-)lc (resp. klt) (resp. plt) (resp. canonical) (resp. terminal) (resp. $\epsilon$-lc) if $a(D,X,B)\geq 0$ (resp. $>0$) (resp. $>0$ for exceptional $D$) (resp. $\geq 1$ for exceptional $D$) (resp. $>1$ for exceptional $D$) (resp. $\geq\epsilon$) for every $D$. A non-klt place of a sub pair $(X,B)$ is a prime divisor $D$ on birational models of $X$ such that $a(D,X,B)\leq0$. A non-klt center is the image on $X$ of a non-klt place. A (resp.sub-)pair is (resp.sub-)dlt if it is lc and log smooth near generic points of non-klt centers.

For a generalized pair $(X',B'+M')$ and a divisor $D$ over $X$, take a sufficiently high resolution $f:X\to X'$ defining $M'=f_*M$ and contains $D$, we define $K_X+B+M:=f^*(K_{X'}+B'+M')$, and one can similarly define generalized version of lc, klt, plt, $\epsilon$-lc by considering the generalized log discrepancy $a(D,X',B'+M'):=1-\mu_D(B)$.
\paragraph{\tbf{Contractions and minimal model programs}}
An algebraic fiber space or a contraction $X/Y$ is a projective morphism $f:X\to Y$ between varieties such that $f_* \mathcal O_X=\mathcal O_Y$, which is equivalent to a projective surjective morphism such that the finite part of the Stein factorization is trivial, or a projective surjective morphism such that the function field of $Y$ is algebraically closed in that of $X$. It's well known the fibers are connected.

We use standard results of the minimal model program (MMP), MMP in char $k>5$ up to dimension 3 is already fully known:
\begin{theorem}[c.f. \cite{birkar2014existenceflipsminimalmodels}\cite{birkar2014existencemorifibrespaces}]
    Let $(X,B)/Z$ be a 3-dimensional klt pair over $k$ of char $>5$, $X\to Z$ be a projective contraction, then there is a minimal model program$/Z$ on $K_X+B$ such that:
    \begin{enumerate}
        \item If $K_X+B$ is pseudo-effective$/Z$, then the MMP ends with a log minimal model$/Z$.
        \item If $K_X+B$ is not pseudo-effective$/Z$,then the MMP ends with a Mori fiber space$/Z$.
    \end{enumerate}
\end{theorem}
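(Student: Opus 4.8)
The plan is to run a $(K_X+B)$-MMP over $Z$ in the usual inductive way and to check at each step that the required operation exists and that the process terminates; since this is a citation-type statement, the work is in assembling the known positive-characteristic inputs rather than in the formal scheme. First I would pass to a small $\mathbb{Q}$-factorialization of $(X,B)$ if necessary, and then invoke the cone and contraction theorem for $\mathbb{Q}$-factorial klt threefold pairs in characteristic $>5$ (as developed in the cited works, together with the base-point-free theorem of Birkar--Waldron/Tanaka): if $K_X+B$ is not nef over $Z$, there is a $(K_X+B)$-negative extremal ray $R$ of $\NEbar(X/Z)$ with its contraction $c_R\colon X\to Y$ over $Z$, and the value of $\rho(X/Z)-\rho(Y/Z)$ and the dimension of the general fiber record whether $c_R$ is of fiber type, divisorial, or small.

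Next I would split into the three cases. If $c_R$ is of fiber type, then $X\to Y\to Z$ exhibits $X$ as a Mori fibre space over $Z$; the existence of such a contraction forces $K_X+B$ to be non-pseudo-effective over $Z$, so this is exactly case (2). If $c_R$ is divisorial, then $(Y,c_{R*}B)$ is again a $\mathbb{Q}$-factorial klt threefold pair over $Z$ with strictly smaller relative Picard number, and I replace $(X,B)$ by it. If $c_R$ is small (a flipping contraction), I invoke the existence of threefold flips in characteristic $>5$ from \cite{birkar2014existenceflipsminimalmodels} (see also the account in \cite{threedimmmp}) to produce the flip $X^+\to Y$, which is again $\mathbb{Q}$-factorial klt over $Z$, and replace $(X,B)$ by $(X^+,B^+)$.

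It then remains to show the procedure stops. Divisorial contractions strictly drop $\rho(X/Z)$, so only finitely many occur between flips; hence it suffices to rule out an infinite sequence of flips, which is precisely the termination statement for threefold klt flips in characteristic $>5$ established in \cite{birkar2014existencemorifibrespaces} and in the development of the threefold MMP in \cite{threedimmmp} (combining special termination, which after finitely many flips confines the flipping loci to a lower-dimensional stratum where surface termination and ACC-type finiteness apply, with the impossibility of repeating klt flips forever in dimension three). Once no $(K_X+B)$-negative extremal ray over $Z$ survives, $K_X+B$ is nef over $Z$ and the output is a log minimal model over $Z$, giving case (1); in case (2), where $K_X+B$ is not pseudo-effective over $Z$, the sequence cannot end with a nef model and must therefore terminate with a fibre-type contraction.

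The genuine difficulty here is not the bookkeeping above but the two quoted inputs: the existence of flips and of divisorial/Mori-fibre contractions, and the termination of flips, for klt threefolds in characteristic $p>5$. These are the hard theorems of \cite{birkar2014existenceflipsminimalmodels}, \cite{birkar2014existencemorifibrespaces} and \cite{threedimmmp}, whose proofs rely on positive-characteristic-specific technology ($F$-singularities, the structure of threefold log resolutions, extension results that sidestep Kodaira-type vanishing) and on the hypothesis $p>5$, which is what keeps the relevant klt pairs strongly $F$-regular. Accordingly, my "proof" is really the reduction of the stated MMP format to these cited results, and I would not attempt to reprove them.
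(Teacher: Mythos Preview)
Your proposal is correct and matches the paper's approach: the paper gives no proof for this theorem at all, simply recording it as a known result with the citations \cite{birkar2014existenceflipsminimalmodels} and \cite{birkar2014existencemorifibrespaces} in the theorem header. Your outline of the standard MMP procedure (cone/contraction theorem, existence of flips, termination) with those same references is exactly the content being cited, so there is nothing to compare beyond noting that you have unpacked what the paper leaves implicit.
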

\paragraph{\tbf{$\mathbb Q$-factorialization}}
A normal variety is $\mathbb Q$-factorial if every divisor is $\mathbb Q$-Cartier. For a generalized pair $(X',B',M')$ with data $X\to X'$, a $\mathbb Q$-factorial generalized dlt model is a $\mathbb Q$-factorial generalized dlt generalized pair $(X'',B''+M'')$ with a projective birational morphism $\psi:X''\to X'$ under a log resolution $X\to X''$ (after taking a common resolution) such that $B''$ and $M''$ are pushdowns of $B$ and $M$, in particular $K_{X''}+B''+M''=\psi^*(K_{X'}+B'+M')$, and if every exceptional prime divisor of $\psi$ appears in $B''$ with coefficient 1. Such model exists for generalized lc pairs. If $(X',B'+M')$ is generalized klt, then there is a $\mathbb Q$-factorial generalized klt model and $\psi$ is a small morphism (i.e. no divisor is contracted or extracted), this is called a small $\mathbb Q$-factorialization.
\paragraph{\tbf{Volumes, Kodaira dimensions and Iitaka fibrations}}
Let $X$ be a normal projective variety of dimension $d$ and $D$ a $\mathbb Q$-divisor on $X$. We define the Kodaira dimension $\kappa(D)$ (resp. the numerical Kodaira dimension $\kappa_\sigma(D)$) to be $-\infty$ if $D$ is not effective (resp. pseudo-effective), and to be the largest integer $r$ such that $\limsup_{m\to\infty}\frac{h^0(\lfloor mD\rfloor)}{m^r}>0$ (resp. $\limsup_{m\to\infty}\frac{h^0(\lfloor mD\rfloor+A)}{m^r}>0$ for some very ample divisor $A$). We define the volume $vol(D):=\limsup_{m\to \infty}\frac{h^0(\lfloor mD\rfloor)}{m^d}$ and say $D$ is big if $vol(D)>0$ as usual. $|\lfloor mD\rfloor|$ will define a morphism $\phi_m=\phi_{\lfloor mD\rfloor}$. The dimension of the image of $\phi_m$ will stabilize to $\kappa(D)$. The stabilized rational fibration $\phi:X\dashrightarrow \phi(X)$ is called the Iitaka fibration. 
\paragraph{\tbf{Fano pairs and varieties of Fano type}}
Let $(X,B)$ be a pair with a contraction $X\to Z$, we say $(X,B)$ is log Fano (resp. weak log Fano) over $Z$ if $-(K_X+B)$ is ample (resp. nef and big) over $Z$. We assume $B=0$ when we don't mention $B$. We say $X$ is of Fano type over $Z$ if $(X,B)$ is klt weak log Fano over $Z$ for some boundary $B$, or equivalently if $(X,\Gamma)$ is klt and $\Gamma$ is big over $Z$ and $K_X+\Gamma\sim_\mathbb Q0/Z$.

Suppose $f:X\to Y$ is a birational contraction and $(X,B)$ is of Fano type, then $(Y,B_Y=f_*B)$ is of Fano type as the pushforward of a big divisor is big. Suppose $(X,B)\dashrightarrow(Y,B_Y)$ is a sub-crepant birational map (i.e. there is a common resolution $W$ such that $(K_X+B)|_W\leq (K_Y+B_Y)|_W$), then $(Y,B_Y)$ is of Fano type will imply that $(X,B)$ is of Fano type. Hence taking crepant resolutions, running MMP and taking $\mathbb Q$-factorializations will keep the property of Fano type.

\paragraph{\tbf{Bounded families}}
Now we introduce the notion of bounded families mentioned in the BAB conjecture. A couple $(X,D)$ is formed by a normal projective variety $X$ and a divisor $D$ on $X$ such that the coefficient of $D$ falls in $\{0,1\}$. Isomorphisms between couples are isomorphisms between base schemes such that the morphism is compatible and onto for boundaries. A set $\mathcal P$ of couples is birationally bounded (resp. bounded) if there exist finitely many projective morphisms $V^i\to T^i$ of varieties and reduced divisors $C^i$ on $V^i$ such that for each $(X,D)\in\mathcal P$ there is an $i$ and a closed point $t\in T$ and a birational isomorphism (resp. isomorphism) $\phi:V_t^i\dashrightarrow X$ such that the fiber $(V_t^i,C_t^i)$ over $t$ is a couple and $E\leq C_t^i$, where $E$ is the sum of the strict transform of $D$ and the reduced exceptional divisor of $\phi$. A set $\mathcal R$ of projective pairs $(X,B)$ is said to be log birationally bounded (resp. log bounded) if the set of $(X,\text{Supp}B)$ is birationally bounded (resp. bounded). And if $B=0$ for all the elements in $\mathcal R$, we usually remove the log and say the set is birationally bounded (resp. bounded). We offer a useful characteristic-free criterion for boundedness here, and from this one can assume that $(V_t^i,C_t^i)$ is isomorphic to $(X,D)$ for bounded families in the above definition (c.f. \cite[2.21]{birkar2019antipluricanonicalsystemsfanovarieties}).
\begin{lemma}[\cite{birkar2019antipluricanonicalsystemsfanovarieties} 2.20]
    If $\mathcal P$ is a set of couples of dimension $d$, $\mathcal P$ is bounded if and only if there is an $r\in \mathbb N$ such that for any $(X,D)\in \mathcal P$, there is a very ample divisor $A$ on $X$ such that $A^d\leq r$ and $A^{r-1}D\leq r$.
\end{lemma}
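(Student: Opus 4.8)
The plan is to prove the two implications separately. The ``only if'' direction is a routine spreading-out argument; the ``if'' direction carries all the content and rests on bounding the embedding dimension of $X$ via the classical minimal-degree estimate.

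\emph{The forward direction.} Suppose $\mathcal P$ is bounded, realized by finitely many projective morphisms $V^i\to T^i$ with reduced divisors $C^i\subset V^i$. By generic flatness and Noetherian induction I would stratify each $T^i$ into finitely many reduced locally closed subsets over which $V^i\to T^i$ and $C^i\to T^i$ are both flat and the fibres have pure dimension $d$; replacing the families by these strata, flatness may be assumed. Pick on each $V^i$ a line bundle $\mathcal A^i$ that is very ample over $T^i$, so its restriction to every fibre is very ample; by flatness the intersection numbers $\bigl(\mathcal A^i|_{V^i_t}\bigr)^{d}=\deg_{\mathcal A^i}V^i_t$ and $\bigl(\mathcal A^i|_{V^i_t}\bigr)^{d-1}\cdot C^i_t=\deg_{\mathcal A^i}C^i_t$ are locally constant in $t$, hence take only finitely many values over all $i$. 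Choosing $r$ larger than all of them and than $d$, and transporting $\mathcal A^i|_{V^i_t}$ to $X$ along the isomorphism $\phi\colon V^i_t\to X$ (and using that $D$, being $\le C^i_t$, has no larger $\mathcal A^i$-degree), yields a very ample $A$ on $X$ with $A^d\le r$ and $A^{d-1}\cdot D\le r$, which is the asserted bound (we read the displayed $A^{r-1}D$ as the $A$-degree of $D$).

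\emph{The converse.} Given $(X,D)\in\mathcal P$ with a very ample $A$ as in the statement, I would embed $X\hookrightarrow\mathbb P^N$ with $N=h^0(X,A)-1$ by the complete linear system $|A|$ (we may treat connected components separately, and a connected normal variety is irreducible). This embedding is linearly non-degenerate, because a basis of $H^0(X,A)$ satisfies no linear relation, so $\deg X=A^d$; by the minimal-degree estimate a general linear section by $d$ hyperplanes is a non-degenerate set of $\deg X$ points in $\mathbb P^{N-d}$, whence $\deg X\ge N-d+1$, i.e.
\[
N\ \le\ A^d+d-1\ \le\ r+d-1 .
\]
Thus every member of $\mathcal P$ is, for some $N$ in the finite range $\{d,\dots,r+d-1\}$, a reduced subscheme of $\mathbb P^N$ of dimension $d$ and degree $\le r$, inside which $D$ sits as a reduced divisor of $A$-degree $A^{d-1}\cdot D\le r$. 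For each such $N$, reduced subschemes of $\mathbb P^N$ of bounded degree have bounded Castelnuovo--Mumford regularity (Mumford, Kleiman), so only finitely many Hilbert polynomials occur, and the corresponding Hilbert schemes are projective of finite type over $k$ by Grothendieck's construction, which is characteristic-free; alternatively one invokes the finite-typeness of the Chow variety of effective $d$-cycles of degree $\le r$ in $\mathbb P^N$. Passing to the flag Hilbert scheme parametrizing pairs $D\subset X\subset\mathbb P^N$ with these bounded Hilbert polynomials, and stratifying the (finitely many) bases into reduced pieces over which the universal pair is flat with couple fibres, produces the required data $V^i\to T^i$, $C^i$; by construction $(X,D)$ is isomorphic to one of the fibres, so $\mathcal P$ is bounded.

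\emph{Main obstacle.} The only genuinely non-formal input is the bound $N\le r+d-1$ on the embedding dimension together with the finiteness of the Hilbert (or Chow) scheme of subvarieties of a fixed $\mathbb P^N$ of bounded degree; both are classical and insensitive to the characteristic, so no positive-characteristic pathology enters and the criterion is indeed characteristic-free. The remaining work is bookkeeping: reducing from arbitrary flat subschemes to couples by stratification, and handling $D$ and $X$ simultaneously through a flag Hilbert scheme so that it is the pair, not merely $X$, that is bounded.
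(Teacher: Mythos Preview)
The paper does not actually prove this lemma: it is stated as Lemma~2.3 with a bare citation to \cite[2.20]{birkar2019antipluricanonicalsystemsfanovarieties} and no argument is given. So there is no in-paper proof to compare your proposal against; the author simply imports the result as a known characteristic-free criterion.

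Your proposal is correct and follows the standard route (essentially the one in Birkar's cited paper and in Koll\'ar's treatment of boundedness via Chow/Hilbert schemes): bound the embedding dimension by the minimal-degree inequality $\deg X\ge \operatorname{codim} X+1$, then invoke finiteness of the relevant Hilbert or Chow scheme, and handle the divisor via a flag construction. Your reading of $A^{r-1}D$ as $A^{d-1}\cdot D$ is the intended one (this is a typo that also appears in some versions of the source). One minor remark: in the forward direction you should note that the definition of ``bounded'' in this paper already allows passing to isomorphic fibres (cf.\ the parenthetical after the lemma), so the transport of $\mathcal A^i|_{V^i_t}$ along $\phi$ is unproblematic; and in the converse, since couples here have $X$ normal, irreducibility is automatic and your aside about connected components is unnecessary. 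Otherwise the argument is complete.
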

If a set of varieties $\mathcal R$ is bounded, then the Gorenstein indices, the (anti-)canonical volumes, the indices of the effective Iitaka fibrations (e.g. for varieties that $\kappa(K_X)\geq 0$, the minimal $m$ such that $|mK_X|$ defines the Iitaka fibration), the Picard numbers, etc. are all bounded. Moreover, one can prove that the normalizations of a bounded family also form a bounded family.

\begin{theorem}
    Let $k$ be an algebraically closed field, suppose that a family $\mathcal P$ of projective varieties $k$ is bounded, then the set $\mathcal P^\nu$ of normalization of the elements in $\mathcal P$ is also bounded.
\end{theorem}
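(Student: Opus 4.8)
The plan is to work from the structural description of a bounded family — finitely many projective morphisms $V^i\to T^i$ whose closed fibres include every member of $\mathcal P$ — and to produce, by Noetherian induction on the dimension of the base, finitely many such families whose closed fibres include every normalization $X^\nu$. Note first that the naive approach of applying the criterion of Lemma~2.3 directly to $X^\nu$ does not work: the normalization $\nu\colon X^\nu\to X$ is finite birational of degree one, so pulling back a very ample divisor with bounded self-intersection from $X$ only yields an ample divisor on $X^\nu$, and effective very ampleness is not available (let alone uniformly) in characteristic $p$. Instead one should normalize total spaces.

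Fix one family $V\to T$. Stratifying $T$ and discarding families with non–geometrically–reduced generic fibre — for such a family the (open) locus of geometrically reduced fibres is empty, so no closed fibre, hence no member of $\mathcal P$, arises from it — we may assume $T$ is integral, $V$ is integral and dominates $T$, the morphism is flat after shrinking $T$, and all fibres over a dense open of $T$ are (geometrically) reduced. Members of $\mathcal P^\nu$ coming from the complement of this dense open are normalizations of fibres of the restricted family over a base of strictly smaller dimension, and are handled by the induction; so it remains to bound $\{(V_t)^\nu: t\in U(k)\}$ for a dense open $U\subseteq T$.

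The characteristic-$p$ subtlety is that the normalization $\mathcal W\to V_{\mathrm{red}}$ of the total space need not restrict fibrewise to the normalization of the fibres, because the normalization of the generic fibre $V_\eta$ over $K=k(T)$ can fail to be geometrically normal over $K$. To repair this, pass to a finite extension: there is a finite field extension $K'/K$ such that $\big((V_\eta\otimes_K K')_{\mathrm{red}}\big)^\nu$ is geometrically normal over $K'$ (spread out the normalization over $\overline K$, a finite-type $\overline K$-scheme, which together with its normality is already defined over a finite subextension). Spreading out produces a finite surjective morphism $T'\to T$ with $T'$ integral; since it is surjective and every closed point has residue field $k$, each $V_t$ ($t\in T(k)$) occurs as a closed fibre of $V':=V\times_T T'\to T'$, so it suffices to bound the normalizations of the closed fibres of $V'/T'$. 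Replacing $(V,T)$ by $(V',T')$ (and $V$ by an integral model dominating $T$) and shrinking $T$ again so that the total-space normalization $\mathcal W\to V_{\mathrm{red}}$ is flat over $T$ and is an isomorphism over a fibrewise-dense open, the generic fibre $\mathcal W_\eta=(V_\eta)^\nu$ is now geometrically normal; since the locus of geometrically normal fibres of the flat projective morphism $\mathcal W\to T$ is open and contains $\eta$, over a dense open $U\subseteq T$ every fibre $\mathcal W_t$ is normal, and being finite and birational over $V_t$ it equals $(V_t)^\nu$. Hence $\{(V_t)^\nu: t\in U(k)\}$ consists of fibres of the single family $\mathcal W|_U\to U$ and is bounded.

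Finally one assembles: the members of $\mathcal P^\nu$ arising from $U$ are bounded as above; those arising from $T\setminus U$ are bounded by the Noetherian induction, since the bad locus in the base drops in dimension at each step — the finite base changes do not increase base dimension — and this terminates; and a finite union of bounded families is bounded. The step I expect to be the main obstacle is the characteristic-$p$ point just made: arranging, after a harmless finite base change, that the normalization of the generic fibre is geometrically normal, so that ``normalize the total space'' genuinely computes the fibrewise normalization. Everything else is standard spreading-out, generic flatness, and openness of the geometrically normal locus.
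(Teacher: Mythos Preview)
Your proof is correct and follows the same architecture as the paper's: Noetherian induction on the base, reduction to a family whose generic fibre has geometrically normal normalization after a suitable base change, normalization of the total space, and openness of the geometrically normal locus to conclude over a dense open.

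The one genuine difference is in how the base change is produced. You invoke an abstract finite extension $K'/K$ making $((V_\eta\otimes_K K')_{\mathrm{red}})^\nu$ geometrically normal, obtained by spreading out from~$\overline K$. The paper instead uses the iterated absolute Frobenius $F^e\colon \mathcal Z\to\mathcal Z$ as the base change, and proves directly that the normalization of the $e$-th Frobenius pullback of the generic fibre becomes geometrically normal for $e\gg 0$ by exhibiting an ascending chain of conductor ideals $\mathfrak I_0\subset\mathfrak I_1\subset\cdots$ in the normalization over the perfect closure, which stabilizes by Noetherianity. The Frobenius route is more explicit and self-contained (no appeal to descent from~$\overline K$), and has the pleasant feature that $F^e$ is a universal homeomorphism, so closed fibres over $k$-points are literally unchanged rather than merely recovered via surjectivity on $k$-points; your route is shorter to state and makes clearer that only the purely inseparable part of the extension is doing work. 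Either choice is fine.
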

\begin{proof}
    In characteristic $0$, suppose that a family $\mathcal P$ of projective varieties over an algebraically closed field is bounded and encoded in a finite set of fibrations $V_i\to Z_i$. By choosing a suitable stratification on $Z_i$'s, one may assume $\mathcal P$ is parametrized by a dense set of such $Z_i$'s and $Z_i$'s are smooth by base changing to a resolution of singularity. We claim that they can be placed in general positions by taking a finer stratification in the following sense: the corresponding fibers of the normalization $V_i^\nu\to Z_i$ are exactly the normalization of the initial fibers. In fact, suppose that $F=V_{i,z}$ is a fiber over a closed point $z\in Z$, then the composition map $F^\nu\to F\to V_i/Z_i$ will factor through $V_i^\nu$ by the definition of normalization. This will give a map $F^\nu\to (V_i^\nu)_z$. However, since $V_i^\nu$ is normal and the general fibers are also normal since we work in characteristic $0$ and the parametrizing fibers are in general positions. Hence we have a canonical map $(V_i^\nu)_z\to F^\nu$ by definition and the two maps give the identity which is what we need. As a corollary for such family $\mathcal P$, the set $\mathcal P^\nu$ of normalization of the elements in $\mathcal P$ is also bounded. In characteristic $p$, the same will hold thanks to the technique of Frobenius base changes:
    
    Suppose $\kappa$ is a field of charcateristic $p>0$ and $X/\kappa$ is a normal projective variety with $H^0(X,\mathcal O_X)=\kappa$. Set $F^e:\kappa\to \kappa$ to be the $e$-th iteration of the Frobenius morphism and set $X_e$ to be the normalization of the reduced scheme of the base change of $X$ under $F_e$, i.e. we have the following pullback diagram in the category of normal schemes:
\[\begin{tikzcd}
	{X_e} & X \\
	\kappa & \kappa
	\arrow[from=1-1, to=1-2]
	\arrow[from=1-1, to=2-1]
	\arrow[from=1-2, to=2-2]
	\arrow["{F^e}", from=2-1, to=2-2]
\end{tikzcd}\]
We claim that for $e$ large enough, $X_e$ is geometrically normal and moreover when base change to the algebraic closure, $\bar{X_e}$ is isomorphic to the normalization of $\bar X$. Indeed set $\kappa_e=\kappa^{\frac{1}{p^e}}$, one can equip the canonical embedding $\iota_e:\kappa\to\kappa_e$, set $X^e$ to be the normalization of the reduced scheme of the base change of $X$ under $\iota_e$. We see $\alpha_e:\kappa_e\to\kappa$ which sends $x$ to $x^p$ is an injective and surjective ring map, hence an isomorphism. We have the following commutative pullback diagram in the category of normal schemes:
\[\begin{tikzcd}
	& {X^e} \\
	{X_e} &&& X \\
	& {\kappa_e} \\
	\kappa &&& \kappa
	\arrow[from=1-2, to=2-4]
	\arrow[from=1-2, to=3-2]
	\arrow[from=2-1, to=1-2]
	\arrow[from=2-1, to=2-4]
	\arrow[from=2-1, to=4-1]
	\arrow[from=2-4, to=4-4]
	\arrow["{\iota_e}", from=3-2, to=4-4]
	\arrow["{\alpha_e}", from=4-1, to=3-2]
	\arrow["{F^e}"', from=4-1, to=4-4]
\end{tikzcd}\]
Here $\alpha_e^X:X_e\to X^e$ is an isomorphism of schemes. When passing to the algebraic closure, one may first pass to the perfect closure $\kappa_\infty=\bigcup\limits_{e=0}^\infty\kappa_e$. $\kappa_\infty$ is perfect since the Frobenius $\kappa_\infty\xrightarrow{F}\kappa_\infty$ is a bijection. If the claim holds when passing to the perfect closure, then it holds when passing to the algebraic closure since being normal is equivalent to being both $S_2$ and $R_1$, where the first condition is invariant under fppf base change and the second is invariant under étale base change. Hence we have the following picture:
\[\begin{tikzcd}
	& {X^\infty} &&& {X^{e+1}} && {X^e} \\
	{X_e^\infty} &&& {X_e^1} && {X_e} &&& X \\
	& {\kappa_\infty} &&& {\kappa_{e+1}} && {\kappa_e} \\
	{\kappa_\infty} &&& {\kappa_1} && \kappa &&& \kappa
	\arrow["\cdots"{description}, from=1-2, to=1-5]
	\arrow[from=1-2, to=3-2]
	\arrow[from=1-5, to=1-7]
	\arrow[from=1-5, to=3-5]
	\arrow[from=1-7, to=2-9]
	\arrow[from=1-7, to=3-7]
	\arrow[from=2-1, to=1-2]
	\arrow["\cdots"{description}, from=2-1, to=2-4]
	\arrow[from=2-1, to=4-1]
	\arrow[from=2-4, to=1-5]
	\arrow[from=2-4, to=2-6]
	\arrow[from=2-4, to=4-4]
	\arrow[from=2-6, to=1-7]
	\arrow[from=2-6, to=2-9]
	\arrow[from=2-6, to=4-6]
	\arrow[from=2-9, to=4-9]
	\arrow["\cdots"{description}, from=3-2, to=3-5]
	\arrow[from=3-5, to=3-7]
	\arrow["{\iota_e}", from=3-7, to=4-9]
	\arrow["{F^e}","{\simeq}"', from=4-1, to=3-2]
	\arrow["\cdots"{description}, from=4-1, to=4-4]
	\arrow[from=4-4, to=3-5]
	\arrow[from=4-4, to=4-6]
	\arrow["{\alpha_e}", from=4-6, to=3-7]
	\arrow["{F^e}"', from=4-6, to=4-9]
\end{tikzcd}\]
One will get $X_e^\infty\to X^\infty$ is also an isomorphism of $\kappa^\infty$-varieties. Hence one only need to prove the claim for $X^e$ rather than $X_e$. Indeed we may work in affine settings and consider the conductor ideal $\mathfrak I_e$ of $X^\infty\to  X^e_{\bar\kappa}$ we get an infinitely long accending chain $\mathfrak I_0\subset\mathfrak I_1\subset\mathfrak I_2\subset\cdots$. Since $X$ is noetherian, one sees $\mathfrak I_e$ stablizes finally, say for $e\geq N$. Which means for any $e>N$, $X^{e+1}\to X^e_{\kappa_{e+1}}$ will have conductor ideal the whole ring, hence $X^e_{\kappa_{e+1}}$ itself is normal over $\kappa_{e+1}$. So one see $X_e$ is normal after base change to arbitrary finite field extension by induction and hence geometrically normal. Moreover $X^\infty\to X^e_{\kappa_\infty}$ will give an isomorphism of $\kappa^\infty$-schemes as desired by the definition of a normalization. So the claim is proved.

Now we return to our lemma. By taking irreducible components of $X\in\mathcal P$, one may assume that all $X\in\mathcal P$ are integral. Since $\mathcal P$ is bounded, there is a finite set of fibrations $\mathcal X_i\to\mathcal Z_i$ such that for each $X\in\mathcal P$, $X$ is a fiber of some $\mathcal X_i/\mathcal Z_i$ over a closed point. One may assume that there is only one morphism of varieties $\mathcal X\to \mathcal Z$ without loss of generality. If there is some open subvariety $\mathcal U\subset \mathcal Z$ such that no point in $\mathcal U$ gives a fiber $X\in\mathcal P$, we just cut it off from $\mathcal Z$ and replace $\mathcal Z$ by the irreducible (reduced) components of $\mathcal Z-\mathcal U$ with the corresponding $\mathcal X$ the base change of the initial one along $\mathcal Z-\mathcal U\to \mathcal Z$. The process stops after finitely many steps by noetherian induction. Hence, we may assume that there is a dense set $\mathcal W\subset \mathcal Z$ parametrizing $X\in\mathcal P$. Since there is a dense set of fibers $F$ such that $F$ are integral projective varieties, the geometric generic fiber is also integral, and in particular, one may assume $\mathcal X\to \mathcal Z$ is a contraction.

Now consider the $e$-th iterated absolute Frobenius morphism $F^e:\mathcal Z\to \mathcal Z$, which is identity on the topological spaces. Set $\mathcal X_e$ to be the normalization of the fiber product $\mathcal X\times_{\mathcal Z,F^e}\mathcal Z$. For any closed fiber $F$ of $\mathcal X/\mathcal Z$, consider the corresponding closed fiber $F_e$ of $\mathcal X\times_{\mathcal Z,F^e}\mathcal Z\to\mathcal Z$, that is , there is some $t=Spec (k)\to \mathcal Z$ such that $F_e=(\mathcal X\times_{\mathcal Z,F^e}\mathcal Z)_t$. Consider the following pullback diagram in the category of schemes:
\[\begin{tikzcd}
	{F_e} && F \\
	& {\mathcal X\times_{\mathcal Z,F^e}\mathcal Z} && {\mathcal X} \\
	{Spec(k)} && {Spec(k)} \\
	& {\mathcal Z} && {\mathcal Z}
	\arrow[from=1-1, to=1-3]
	\arrow[from=1-1, to=2-2]
	\arrow[from=1-1, to=3-1]
	\arrow[from=1-3, to=2-4]
	\arrow[from=1-3, to=3-3]
	\arrow[from=2-2, to=2-4]
	\arrow[from=2-2, to=4-2]
	\arrow[from=2-4, to=4-4]
	\arrow["{F^e}", from=3-1, to=3-3]
	\arrow[from=3-1, to=4-2]
	\arrow[from=3-3, to=4-4]
	\arrow["{F^e}", from=4-2, to=4-4]
\end{tikzcd}\]
Since $k$ is algebraically closed, $F^e:Spec(k)\to Spec(k)$ is an isomorphism and hence $F_e$ is isomorphic to $F$ as $k$-varieties. On the other hand, for any contraction $V/T$ where $T$ is normal and integral, the generic fiber of the normalization $V^\nu/T$ is the normalization of the generic fiber of $V/T$ as $k(T)$-varieties, since the generic fiber of a normal whole space is also normal and the rest can be checked by the functoriality of normalizations. As a result, the generic fiber of $\mathcal X_e/\mathcal Z$ is just $(\mathcal X_{\eta_\mathcal Z}\times_{k(\mathcal Z),F^e}k(\mathcal Z))^\nu$. By our claim for some $e$ sufficiently large it is geometrically normal. So, over some open subset $\mathcal U\subset\mathcal Z$, all the closed fibers of $\mathcal X_e/\mathcal U$ are normal since geometric normality (moreover geometric regularity and geometric reducedness) is an open condition (c.f. \cite[II.6.9.1 and III.12.1.1]{PMIHES_1967__32__5_0}) and $k$ is algebraically closed. Combined with all the results above and using the same arguments in chracteristic $0$, one sees that any $X\in\mathcal P$ which is parametrized by some $t\in\mathcal U$, $X^\nu$ is isomorphic to $ (\mathcal X^e)_t$. Hence, the normalizations of such $X$ are parametrized by a subset of $\mathcal X^e/\mathcal U$ up to isomorphisms. Since there is a dense set $\mathcal W\subset \mathcal Z$ parametrizing all $X\in\mathcal P$, cut $\mathcal U$ away from $\mathcal Z$, $\mathcal P$ decreases strictly. Moreover, the rest $X\in\mathcal P$ will be parametrized by a subset of $\mathcal X/(\mathcal Z-\mathcal U)$. By noetherian induction, the assertion holds.
\end{proof}
\begin{corollary}
    Suppose $k$ is an arbitrary field and $\mathcal P$ is a bounded set of projective varieties$/k$ with $H^0(X,\mathcal O_X)=k$ for all $X\in \mathcal P$. Then the set consisting of the normalizations of such $X_{\bar k}$ for all $X\in\mathcal P$ is bounded.
\end{corollary}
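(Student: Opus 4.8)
The plan is to deduce the statement from Theorem 2.4, applied over the algebraically closed field $\bar k$; the only genuinely new inputs are a base change to $\bar k$ and an observation on residue fields that is exactly what the hypothesis $H^0(X,\mathcal O_X)=k$ provides.

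Since $\mathcal P$ is bounded there are finitely many projective morphisms $V^i\to T^i$ of $k$-varieties such that every $X\in\mathcal P$ is $k$-isomorphic to a fibre $V^i_t$ over a closed point $t\in T^i$. I would first observe that the hypothesis forces $\kappa(t)=k$: indeed $V^i_t$ is a scheme over $\kappa(t)$, so the structure map factors as $k\to\kappa(t)\to H^0(V^i_t,\mathcal O_{V^i_t})$, and under the $k$-algebra isomorphism $H^0(V^i_t,\mathcal O_{V^i_t})\cong H^0(X,\mathcal O_X)=k$ this exhibits $k\to\kappa(t)$ as a ring map admitting a retraction; as $\kappa(t)$ is a field finite over $k$, this forces $\kappa(t)=k$. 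Hence $t$ is $k$-rational, and base changing to $\bar k$ gives a $\bar k$-rational closed point $t_{\bar k}\in T^i_{\bar k}$ with fibre $(V^i_{\bar k})_{t_{\bar k}}=(V^i_t)\times_k\bar k\cong X_{\bar k}$. So, after replacing each $T^i_{\bar k}$ by the reduced structure on its irreducible components, every $X_{\bar k}$ occurs as a closed fibre of one of finitely many projective morphisms of $\bar k$-varieties.

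Next I would reduce to Theorem 2.4. Since $\bar k$ is perfect, $(X_{\bar k})_{\mathrm{red}}$ is a reduced projective $\bar k$-scheme, its irreducible components are integral projective $\bar k$-varieties (whose number stays bounded, being generically constant in a family), and the normalization of $X_{\bar k}$ is the disjoint union of the normalizations of these components. Running the argument of the proof of Theorem 2.4 over $\bar k$ — generic flatness, Noetherian induction on the base, and the Frobenius base change trick used there to control geometric reducedness/normality of the generic fibre over the (imperfect) function fields $\bar k(T^i_{\bar k})$ — one organizes the integral components of the $(X_{\bar k})_{\mathrm{red}}$ into a bounded family of projective varieties over $\bar k$; then Theorem 2.4 shows their normalizations form a bounded family. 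Taking disjoint unions of boundedly many members, which again yields a bounded family, gives that the set of normalizations of the $X_{\bar k}$ for $X\in\mathcal P$ is bounded.

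The main obstacle is precisely the step passing from ``each $X_{\bar k}$ is a closed fibre of a fixed finite family over $\bar k$'' to ``the integral components of the $(X_{\bar k})_{\mathrm{red}}$ form a bounded family of varieties'': the fibres $X_{\bar k}$ may be non-reduced, reducible, or non-normal even when $X$ is smooth (reduction and normalization do not commute with base change), so Theorem 2.4 cannot be invoked as a black box, and one must re-use its internal mechanism — in particular the Frobenius base change trick, which is needed exactly because a generic fibre over an imperfect field can fail to be geometrically reduced. Everything else, namely the residue-field computation and the assembly of disjoint unions, is routine.
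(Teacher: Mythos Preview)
Your approach is essentially the paper's, and your residue-field computation showing $\kappa(t)=k$ is exactly the one line the paper gives. The paper's proof is much terser: after observing that each $X_{\bar k}$ is the fibre of the base-changed family $\mathcal X_{\bar k}\to\mathcal Z_{\bar k}$ over the $\bar k$-point $\bar t$, it simply invokes Theorem 2.4 over $\bar k$ as a black box and stops.

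Your worry that Theorem 2.4 cannot be applied directly because $X_{\bar k}$ may fail to be reduced is technically legitimate under the paper's convention that varieties are reduced, but the remedy is far lighter than re-running the Frobenius base-change machinery. Normalization factors through the reduced structure, and passing from a bounded family of projective schemes to the family of their reductions preserves boundedness: the same very ample $A$ restricts, and $A^d$ on $(X_{\bar k})_{\mathrm{red}}$ is at most $A^d$ on $X_{\bar k}$, so Lemma 2.3 applies. The further reduction to irreducible/integral members is already the first sentence of the proof of Theorem 2.4 itself, so it need not be repeated here. In short, you have the right skeleton, but you are over-engineering the last step; the paper's direct invocation of Theorem 2.4 is justified after the trivial replacement of $X_{\bar k}$ by $(X_{\bar k})_{\mathrm{red}}$.
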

\begin{proof}
    By Lemma 2.4, one only needs to see that the set of $X_{\bar k}$'s is bounded where $X\in\mathcal P$. Suppose $X\in \mathcal P$ are parametrized by a closed point $t\in\mathcal Z$ via a fibration $\mathcal X\to \mathcal Z$ by boundedness. Then $t=Spec(k)$ since $H^0(X,\mathcal O_X)=k$. Hence $X_{\bar k}$ is parametrized by $\bar t\in\mathcal Z_{\bar k}$ via $\mathcal X_{\bar k}\to \mathcal Z_{\bar k}$ and the assertion follows.
\end{proof}
\paragraph{\tbf{F-singularities}} Other than singularities from MMP (e.g klt,lc,etc.), there is another system of singularities in positive characteristic, by detecting the wildness of the Frobenius map. The Frobenius $F:X\to X$ is identity on the base space and the morphism on the structure sheaf is defined by the inclusion $F^\sharp:\mathcal O_X\to F_*\mathcal O_X=\mathcal O_X^{\frac{1}{p}}$ (as X is reduced). The inspiration is the theorem of Kunz\cite{kunz1969regularlocal}, which says that a local k-ring essentially of finite type is regular if and only if the Frobenius map is free. Later Huneke and Hotchster stated the theory of F-singularities, one can see a detailed note on this topic in\cite{hotchstertightclosure}. Here we give the brief definitions:
\begin{definition}[Map pairs]
    Let $(X,\Delta)$ be a log pair which is $\mathbb Z_{(p)}$-log Gorenstein for simplicity. Assume $(p^e-1)(K_X+\Delta)$ is Cartier, set a certain line bundle:
    \[\mathcal L_{e,\Delta}:=\mathcal O_X((1-p^e)(K_X+\Delta))\]  
    Take any open affine chart $U\subset X$, consider 
    \[\begin{aligned}
    \mathcal Hom_{\mathcal O_U}(F^e_*\mathcal O_U((p^e-1)\Delta),\mathcal O_U)&=\mathcal Hom_{\mathcal O_U}(F^e_*\mathcal O_U((p^e-1)\Delta)\otimes\mathcal O_U(K_U),\mathcal O_U(K_U))\\&=\mathcal Hom_{\mathcal O_U}(F^e_*\mathcal O_U((p^e-1)\Delta+p^eK_U),\mathcal O_U(K_U))\\&=(\mathcal Hom_{\mathcal O_U}(\mathcal O_U((p^e-1)\Delta+p^eK_U),\mathcal O_U(K_U)))^{\frac{1}{p^e}}\\&=F^e_*\mathcal O_U((1-p^e)(K_U+\Delta))=F^e_*\mathcal L_{e,\Delta}
    \end{aligned}
    \]
    Hence any section of $\mathcal L_{e,\Delta}$ will correspond to a map $\psi:F^e_*\mathcal O_U((p^e-1)\Delta)\to \mathcal O_U$, i.e. every $\mathbb Q$-divisor $\Delta'$ such that $D_\psi:=(1-p^e)(K_X+\Delta')\sim (1-p^e)(K_X+\Delta)$ and $D_\psi$ is effective will correspond to a non-zero map $\psi:F^e_*\mathcal O_U((p^e-1)\Delta)\to \mathcal O_U$. Dually speaking, there is a unique map $\phi_\Delta:F^e_*\mathcal L_{e,\Delta}\to\mathcal O_U$ that  corresponds to the natural map $\mathcal O_U\to F^e_*\mathcal O_U\to F^e_*(\mathcal O_U((p^e-1)\Delta))$ induced by the $\mathbb Q$-divisor $\Delta$. We call $(X,\phi_\Delta)$ the map pair that corresponds to the pair $(X,\Delta)$.
\end{definition}
\begin{definition}[Test ideal and non-F-pure ideal, F-singularities]
    Let $(X,\Delta)$ be a log pair which is $\mathbb Z_{(p)}$-log Gorenstein for simplicity. The test ideal $\tau(X,\Delta)$ (resp. non-F-pure ideal $\sigma(X,\Delta)$) of $(X,\Delta)$ is the unique smallest non-zero ideal (resp. largest ideal) $J\subset\mathcal O_X$ such that $\phi_\Delta\circ F^e_*(J\mathcal L_{e,\Delta})=J$. $(X,\Delta)$ is called strongly F-regular (resp. sharply F-pure) if $\tau(X,\Delta)=\mathcal O_X$ (resp. $\sigma(X,\Delta)=\mathcal O_X$). The closed subscheme corresponding to the ideal sheaf $\tau(X,\Delta)$ is called the non-F-regular center of $(X,\Delta)$.
\end{definition}
\begin{remark}
    There is a different but much more concise definition of F-singularities without using test ideals (or non-F-pure ideals). Again, let $(X,\Delta)$ be a log pair which is $\mathbb Z_{(p)}$-log Gorenstein for simplicity. Suppose $(p^e-1)(K_X+B)$ is Cartier near $x\in X$, we say $(X,\Delta)$ is sharply F-pure (resp. strongly F-regular) at $x$ if the natural morphism $\mathcal O_X\to F^{ne}_* \mathcal O_X\to F^{ne}_*\mathcal O_X((p^{ne}-1)\Delta)$ splits when localizing to $x$ as $\mathcal O_{X,x}$-modules for some $n>0$ (resp. for every effective Cartier divisor $D$, $\mathcal O_X\to F^{ne}_* \mathcal O_X\to F^{ne}_*\mathcal O_X((p^{ne}-1)\Delta+D)$ splits when localizing to $x$ as $\mathcal O_{X,x}$-modules for some $n>0$).  Moreover, a warning is that F-singularities doesn't behave well under MMP or resolution of singularities.
\end{remark} 

\begin{definition}[F-signatures]
    Assume $R$ a local ring over $k$, $\Delta$ is a $\mathbb Q$-divisor on Spec$(R)$, an inclusion $R^{\oplus m}\subset F^e_*R$ presents an $(R,\Delta)$ direct summand if the map is an inclusion of $R$-modules and each component's projection $F^e_*R\to R$ factors through $F^e_*R(\lceil (p^e-1)\Delta\rceil)$, denote $ a^\Delta_e$ the $e$-th F-splitting number to be the maximal $m$ such that $R^m$ is an $(R,\Delta)$ direct summand of $F^e_*R$. Suppose $R$ is of dimension $d$, we denote
    \[s(R,\Delta)=\lim\limits_{e\to\infty}\frac{a^\Delta_e}{p^{ed}}\] to be the F-signature of $(R,\Delta)$. We say a local ring pair $(R,\Delta)$ is $\epsilon$-F-regular if $s(R,\Delta)>\epsilon$, a variety is $\epsilon$-F-regular if it is $\epsilon$-F-regular at all closed points.
\end{definition}
\begin{remark}
    The F-signature can be defined defined for rings over $F$-finite fields. A ring $R$ is regular (resp. a pair $(R,\Delta)$ is F-regular) if and only if $s(R)=1$ (resp. $s(R,\Delta)>0$). Moreover, for klt pair $(R,\Delta)$, one has $s(R,\Delta)\leq mld(R,\Delta)$\cite{blickle2014fsingularitiesalterations}, hence if $(X,\Delta)$ is $\epsilon$-F-regular then it is $\epsilon$-klt. Moreover, if $(X,\Delta)$ is strongly F-regular (resp. sharply F-pure) then it is klt (resp. lc) \cite{hara2000fregularfpureringsvs}. F-signatures are similar quantities to describe F-singularities like log discrepancies for singularities from MMP. F-singularities are similar but much stronger analogues of the MMP singularities in positive characteristic.
\end{remark}
\section{Adjunctions and F-adjunctions}
Adjunctions are important tools in birational geometry, the first kind of adjunctions is the divisorial adjunction. In characteristic 0, the smooth case of divisorial adjunction is well known as adjunction formulas in complex geometry. Generally speaking, divisorial adjunctions gives a rough relation between the singularities of the ambient variety and that of the divisors:
\begin{theorem}[\cite{birkar2014existenceflipsminimalmodels} 4.1,4.2]
    Let $(X,B)$ be a pair, $S$ be a component of $\lfloor B\rfloor$, and $S^\nu\to S$ be the normalization. Then there is a canonically determined R-divisor $B_{S^\nu} \geq 0$ such that
\[K_{S^\nu} + B_{S_\nu} \sim_\mathbb Q (K_X + B)|_S\]
Moreover let $\Phi\subset [0, 1]$ be a DCC set of rational numbers, assume that:
\begin{enumerate}
    \item $(X,B)$ is lc outside a codimension 3 closed subset, and
    \item the coefficients of $B$ are in $\Phi$.
\end{enumerate}
Then $B_{S^\nu}$ is a boundary with coefficients in $\mathfrak S_\Phi$, here
\[\mathfrak S_\Phi=\{\frac{m-1}{m}+\sum\frac{l_ib_i}{m}\leq 1|m\in\mathbb Z^{>0}\cup\{\infty\},l_i\in\mathbb Z^{\geq 0},b_i\in\Phi\}.\]
\end{theorem}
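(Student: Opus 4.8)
The argument naturally splits into three parts: the construction of $B_{S^\nu}$ together with its canonicity, effectivity and the $\mathbb{Q}$-linear equivalence $K_{S^\nu}+B_{S^\nu}\sim_\mathbb{Q}(K_X+B)|_S$; the identification of the coefficients of $B_{S^\nu}$; and the deduction that these coefficients lie in $[0,1]\cap\mathfrak S_\Phi$. For the first part, write $B=S+B'$ with $S\not\leq\Supp B'$, noting that the coefficient of $S$ in $B$ is exactly $1$ since $(X,B)$ is a pair. Take a log resolution $\pi\colon X'\to X$ of $(X,B)$ (for the parts that actually matter one only ever needs to resolve a surface germ, so this is harmless in any characteristic), let $S'$ be the strict transform of $S$, and define $B_{X'}$ by $K_{X'}+B_{X'}=\pi^*(K_X+B)$. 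Since $S'$ is smooth, ordinary adjunction gives $(K_{X'}+B_{X'})|_{S'}=K_{S'}+B_{S'}$ with $B_{S'}:=(B_{X'}-S')|_{S'}$; as the morphism $S'\to S$ factors through $S^\nu$, set $B_{S^\nu}$ to be the pushforward of $B_{S'}$ to $S^\nu$. Canonicity (independence of $\pi$) is the usual comparison of two log resolutions over a common one, and pushing the identity $K_{S'}+B_{S'}=(\pi|_{S'})^*\big((K_X+B)|_S\big)$ down to $S^\nu$ gives the desired $\mathbb{Q}$-linear equivalence. Effectivity $B_{S^\nu}\geq 0$, with no hypothesis on singularities, is the effectivity of the different: it follows from the conductor-theoretic comparison between $\omega_{S^\nu}$ and $\omega_X(S)|_{S^\nu}$, which is characteristic-free. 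Finally, since a Weil divisor is determined in codimension one, every coefficient $\mu_P B_{S^\nu}$ may be computed after replacing $X$ by $\Spec\mathcal O_{X,\eta_Z}$, where $Z$ is the image of $P$ in $X$; here $Z$ has codimension $2$ in $X$ (the finite morphism $S^\nu\to S$ contracts no divisor, so $\dim Z=\dim S^\nu-1$), so by hypothesis (1) the pair $(X,B)$ is lc at $\eta_Z$.

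The core is the local adjunction computation at $\eta_Z$. After the localization of the previous paragraph, $X$ is a normal surface germ, $S$ a curve germ with coefficient $1$ in $B=S+B'$, and $(X,S+B')$ is lc. If $(X,S+B')$ is plt at $\eta_Z$, then $(X,S)$ is plt, hence $K_X+S$ is $\mathbb{Q}$-Cartier there, say of index $m$; the coefficient of the different $\Diff_{S^\nu}(0)$ along $P$ is then $\tfrac{m-1}{m}$, which is the classical computation on the minimal resolution of the surface germ --- its dual graph combinatorics being independent of the characteristic --- or, equivalently, an appeal to the known description of two-dimensional plt singularities. Writing $B'=\sum_i b_iB_i$ and choosing $m$ so that $m(K_X+B)$ is also Cartier at $\eta_Z$, each $mB_i$ is Cartier there, so $B_i$ contributes $\tfrac{l_i}{m}$ to the coefficient along $P$ with $l_i\in\mathbb Z_{\geq0}$; altogether $\mu_PB_{S^\nu}=\tfrac{m-1}{m}+\sum_i\tfrac{l_ib_i}{m}$, and this is $\leq 1$ precisely because $(X,S+B')$ is lc at $\eta_Z$. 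If $(X,S+B')$ is not plt at $\eta_Z$, then $\mu_PB_{S^\nu}=1$ and, since lc forces $B'$ to avoid $\eta_Z$, the same formula holds with $m=\infty$ and all $l_i=0$. In all cases $\mu_PB_{S^\nu}\in[0,1]$, and by hypothesis (2) it lies in $\mathfrak S_\Phi$. This local computation --- in particular making sure that the two-dimensional adjunction and classification of surface singularities it relies on are genuinely valid in characteristic $p$ and conceal no vanishing theorem --- is the step I expect to be the main obstacle; everything else is formal and characteristic-free.

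It remains to check that $\mathfrak S_\Phi$ is DCC whenever $\Phi$ is, for then $B_{S^\nu}\geq 0$ is a boundary with coefficients in $\mathfrak S_\Phi$, as claimed. Suppose $c_n=\tfrac{m_n-1}{m_n}+\sum_i\tfrac{l_{i,n}b_{i,n}}{m_n}\leq 1$ is a non-increasing sequence in $\mathfrak S_\Phi$. The terms with $m_n=\infty$ all equal $1$ and may be discarded; for the remaining terms the inequality $\tfrac{m_n-1}{m_n}\leq c_n\leq c_1$ forces the $m_n$ to be bounded, so after passing to a subsequence $m_n=m$ is constant. Then $\sum_i l_{i,n}b_{i,n}=m\big(c_n-\tfrac{m-1}{m}\big)$ is non-increasing and at most $1$; since $\Phi$ is DCC its nonzero elements are bounded below by some $\delta>0$, hence $\sum_i l_{i,n}\leq 1/\delta$ is bounded, so each $\sum_i l_{i,n}b_{i,n}$ is a sum of at most $1/\delta$ elements of $\Phi$ and therefore ranges over a DCC set. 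Consequently $c_n$ stabilizes, proving that $\mathfrak S_\Phi$ is DCC.
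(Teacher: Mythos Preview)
The paper does not supply a proof of this statement: it is quoted verbatim from \cite{birkar2014existenceflipsminimalmodels}, Sections~4.1--4.2, as an input, so there is no in-paper argument to compare against. Your outline is the standard one used there (and in Shokurov's original treatment, and in Koll\'ar's \emph{Singularities of the Minimal Model Program}): localize at a codimension-$2$ point of $X$ to reduce to a normal lc surface germ, invoke the classification of such singularities --- which is valid in all characteristics --- and read off the coefficient of the different. The construction of $B_{S^\nu}$, its effectivity via the conductor, and your DCC argument for $\mathfrak S_\Phi$ are all fine.

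Two local imprecisions are worth fixing. In the plt case you write ``choosing $m$ so that $m(K_X+B)$ is also Cartier'', which reads as a fresh choice of $m$ and would break the identity $\mu_P\Diff_{S^\nu}(0)=\tfrac{m-1}{m}$; the point is rather that for a plt surface germ $(X,S)$ the local class group at $\eta_Z$ is cyclic of order exactly the index $m$ of $K_X+S$, so the \emph{same} $m$ already makes every Weil divisor, in particular each $B_i$, Cartier. In the non-plt case your claim ``lc forces $B'$ to avoid $\eta_Z$'' is false as stated (take $X=\mathbb A^2$, $S=\{x=0\}$, $B'=\{y=0\}$: this is lc, not plt, and $B'$ passes through the origin); the correct dichotomy is on whether $(X,S)$ itself is plt at $\eta_Z$. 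If it is, the formula with finite $m$ applies regardless of $B'$; if it is not, then indeed $B'$ must avoid $\eta_Z$ for $(X,S+B')$ to remain lc, and the coefficient is $1$. Either way $\mu_PB_{S^\nu}\leq 1$ follows from lc, so the conclusion stands.
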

Another useful adjunction is the fiber space adjunction, or more usually called the canonical bundle formula:
\begin{conjecture}[Canonical bundle formula]
    Suppose $(X,B)/Z$ a contraction with relative dimension $>0$, where $(X,B)$ is generically lc projective pair and $K_X+B\sim_\mathbb Q 0/Z$, let $\eta=\text{Spec}(k(Z))$ to be the generic point of $Z$, then there is 
    \[B_Z:=\sum\limits_{D\text{ prime divisor on }Z}(1-lct_\eta(X,B,f^*D))D\]
     and some pseudo-effective $\mathbb Q$-b-divisor $M_Z$ such that $K_X+B_X\sim_\mathbb Q f^*(K_Z+B_Z+M_Z)$. Moreover, if $(X,B)$ is lc, then $M_X$ is a b-nef $\mathbb Q$-b-divisor, where $M_X=K_X+B_X-f^*(K_Z+B_Z)\sim_\mathbb Qf^*M_Z$
\end{conjecture}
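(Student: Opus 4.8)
The plan is to separate the statement into two parts: the existence of the formula together with the asserted shape of $B_Z$, which is essentially a birational-bookkeeping argument, and the positivity of the moduli part, which is the substantive content.

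First I would construct the formula. Since $f$ is a contraction and $K_X+B\sim_{\mathbb Q}0/Z$, there is a $\mathbb Q$-divisor $L$ on $Z$ with $K_X+B\sim_{\mathbb Q}f^{*}L$, and $L$ is unique up to $\mathbb Q$-linear equivalence because $f_{*}\mathcal O_X=\mathcal O_Z$; after a small $\mathbb Q$-factorialization $L$ may be taken $\mathbb Q$-Cartier. Define $B_Z$ by the displayed discriminant formula and set $M_Z:=L-K_Z-B_Z$. One checks that $(X,B)$ being lc near $\eta$ forces $\mathrm{lct}_\eta\ge 0$, so the coefficients of $B_Z$ are $\le 1$, and that a prime divisor $D$ not dominating $Z$ contributes nothing, so $B_Z$ is a genuine $\mathbb Q$-divisor on $Z$. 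To upgrade $B_Z$ and $M_Z$ to b-divisors, for each birational model $Z'\to Z$ take a log resolution $X'\to X$ dominating $Z'$ with induced contraction $f':X'\to Z'$ and crepant pullback $K_{X'}+B_{X'}$ of $K_X+B$, and repeat the construction to get $B_{Z'}$, $M_{Z'}$; birational invariance of log canonical thresholds makes these push forward compatibly, yielding b-$\mathbb Q$-divisors $\mathbf B_Z$, $\mathbf M_Z$ with $K_{X'}+B_{X'}\sim_{\mathbb Q}f'^{*}(K_{Z'}+B_{Z'}+M_{Z'})$ on every model, hence in particular $M_X=K_X+B-f^{*}(K_Z+B_Z)\sim_{\mathbb Q}f^{*}M_Z$.

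Next comes the positivity. For pseudo-effectivity of $M_Z$ I would cut down by a general complete-intersection curve $C\subset Z$, reduce to a Calabi--Yau type fibration $X_C\to C$ over a curve, and bound $\deg(M_Z|_C)$ from below: over a curve this can be done by passing to a semistable model and observing that $M$ is a limit of effective corrections, or by identifying $M_Z|_C$ with the degree of a Hodge-type subsheaf of $f_{C*}\omega_{X_C/C}(\ldots)$. The deep point is the b-nefness of $\mathbf M$ when $(X,B)$ is lc. In characteristic $0$ this is the Ambro--Kawamata--Fujino theorem via variation of Hodge structure, which is unavailable here; the positive-characteristic substitute I would pursue uses that the moduli part depends only on the birational class of the generic fibre $(X_\eta,B_\eta)$, which under hypotheses like those of Theorem~3.3 (relative dimension $\le 2$, DCC or bounded-index or $\epsilon$-lc boundary, $B$ relatively big) ranges over a bounded family; combining this with the canonical bundle formulas already known for curve and surface fibrations in positive characteristic and the theory of complements, one passes to a (possibly inseparable) finite cover $Z''\to Z$ over which the family is isotrivial enough that $M$ becomes semiample, hence nef, and then descends nefness to $Z$.

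The \emph{main obstacle} is exactly this last step: establishing b-nefness (or even semiampleness) of the moduli part without Hodge theory. Since Frobenius can be pathological, nef divisors need not be semiample and vanishing fails, so the statement cannot be expected in full generality — this is why Theorem~3.3 restricts to $\dim X=3$, imposes DCC/bounded-index/$\epsilon$-lc conditions, and requires $p>p_0$, the last being needed so that the bounded family of fibres, the associated complements, the covers of degree prime to $p$, and the relevant Kawamata--Viehweg-type vanishing all behave as in characteristic $0$. Thus the realistic proposal is: prove the formula and pseudo-effectivity of $M_Z$ unconditionally, and prove b-nefness of $\mathbf M$ only after reducing, via boundedness of the fibres, to the low-dimensional canonical bundle formulas already available in positive characteristic.
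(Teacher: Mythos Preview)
The statement you were asked to prove is labeled \emph{Conjecture} in the paper, and the paper supplies no proof of it. Immediately after stating it the paper remarks that in characteristic $0$ the conjecture is completely settled, but that in positive characteristic it \emph{fails} in general (a counterexample in Witaszek's paper is cited). So there is no ``paper's own proof'' to compare against: the correct response here is to recognize that the paper is recording an open (indeed, false-as-stated) statement, not proving it.

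Your proposal does more or less acknowledge this by the end, where you retreat to ``prove the formula and pseudo-effectivity unconditionally, and prove b-nefness only under the restrictions of Theorem~3.3''. But that is a proof plan for Theorem~3.3, not for the conjecture, and even for Theorem~3.3 your outline diverges from what the paper actually does. The paper's proof of Theorem~3.3 does not cut to curves in $Z$, does not argue via isotriviality after a cover, and does not use complements or Kawamata--Viehweg-type vanishing. Instead it works case by case on $\dim Z$: when $\dim Z=2$ it bounds the inseparable degree of the horizontal components of $B$ over $Z$ by a degree count on the rational-curve fibres, forcing the geometric generic fibre to be lc for $p>p_0(\Phi)$, and then invokes Witaszek's canonical bundle formula for $\mathbb P^1$-fibrations; when $\dim Z=1$ it first runs a relative MMP to make $B$ relatively ample, then uses boundedness of $\epsilon$-lc del Pezzo surfaces over $\mathrm{Spec}\,\mathbb Z$ to bound all the relevant field-extension degrees and intersection numbers, shows directly that for $p$ large the log resolution of the generic fibre stays snc after base change to $\overline{\eta}$ (a hands-on argument with dual graphs of surface singularities and genus bounds), and concludes geometric lc-ness of $(F,B_F)$, so that Benozzo's formula applies. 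The common mechanism is ``boundedness of the fibre forces separability once $p$ exceeds an explicit threshold, hence the known curve/surface canonical bundle formulas apply'', which is related in spirit to your last paragraph but quite different in execution.
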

Here, $B_Z$ is called the determinant part, which is determined uniquely, and $M_X$ is called the moduli divisor. As $M_X=f^*M_Z$, we also call $M_Z$ the moduli part of the fibration. As $f:X\to Z$ is a contraction, the b-nefness of $M_X$ is equivalent to that of $M_Z$. 

In characteristic $0$, the conjecture was completely settled by a series of works (c.f.\cite{Kawamata1997SubadjunctionOL} \cite{Ambro_2005}\cite{2021arXivpositivityofmoduli}\cite{FilsemiampleModulipart}\cite{JLX22}\cite{CHLX23}.) However, in positive characteristic, the conjecture indeed fails (c.f. \cite[3.5]{Witaszek2017OnTC}).  Some works \cite{Witaszek2017OnTC}\cite{benozzocanonicalbundleformula} have settled the case when the geometric generic fiber, or equivalently, the general fibers are lc and the base or the fibers are projective curves. With some boundedness results of Fano varieties of lower dimensions, one can prove that some klt Fano type fibrations satisfy the condition of these papers, hence the canonical bundle formula holds for these fibrations. The following is our canonical bundle formula for 3-dimensional fibrations of Fano type in large characteristic:
\begin{theorem}[Canonical bundle formula for threefolds]
    Assume $(X,B)$ is a klt pair over an algebraically closed field of characteristic $p$, $f:X\to Z$ is a contraction with dim $X=3$ and dim $Z>0$, $K_X+B\sim_\mathbb Q 0/Z$, $B$ is relatively big, then we have:
    \begin{enumerate}
        \item if dim $Z=2$, let $\Phi\subset [0,1]\cap \mathbb Q$ be a DCC set. Suppose $B\in\Phi$, then there is a prime number $p_0=p_0(\Phi)$, such that for all $p>p_0$, the canonical bundle formula holds for $(X,B)/Z$.
        \item if dim $Z=1$ and general fibers of $X/Z$ are normal, let $I>0$ be a natural number. Then there is a $p_0=p_0(I)$ such that for all $p>p_0$, suppose $I(K_X+B)$ is Cartier near the generic fiber, the canonical bundle formula holds for $(X,B)/Z$.
        \item if dim $Z=1$ and general fibers of $X/Z$ are normal, let $\epsilon>0$ be a real number and  $\Phi\subset [0,1]\cap \mathbb Q$ be a finite set. Then there is a prime number $p_0=p_0(\epsilon,\Phi)$ such that for all $p>p_0$, suppose $B\in\Phi$ and $(X,B)$ is $\epsilon$-lc,  the canonical bundle formula holds for $(X,B)/Z$.
    \end{enumerate}
\end{theorem}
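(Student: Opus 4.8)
The strategy is to reduce each of the three cases to the known characteristic-$p$ canonical bundle formula for fibrations whose general fibers are lc and whose base (or fiber) is a curve, as established in \cite{Witaszek2017OnTC} and \cite{benozzocanonicalbundleformula}. The common thread is: after passing to a suitable crepant/$\mathbb{Q}$-factorial model and running an MMP, the fibers of $f\colon X\to Z$ become weak Fano (or Fano type) pairs of dimension $\dim X-\dim Z\le 2$, and I want to invoke boundedness of such fibers --- either Alexeev's theorem for surfaces \cite{boundednessandk2}, or the bounded-Gorenstein-index / bounded-coefficient results cited in the introduction --- to conclude that the general fiber $(X_\eta, B_\eta)$ has bounded (hence mild) singularities uniformly in $p$. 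Once the general fiber is lc for all $p>p_0$, the hypotheses of the known positive-characteristic canonical bundle formula are met and I can quote it directly.

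More precisely, I would argue as follows. Since $K_X+B\sim_{\mathbb{Q}}0/Z$ and $B$ is relatively big, the generic fiber $(X_\eta,B_\eta)$ is a klt pair with $K_{X_\eta}+B_\eta\sim_{\mathbb{Q}}0$ and $B_\eta$ big, so $X_\eta$ is of Fano type over $k(Z)$; geometrically, the general closed fibers $(F,B_F)$ are klt of Fano type of dimension $\le 2$. For case (1), $\dim Z=2$ so the fibers are curves: a klt Fano type curve is just $\mathbb{P}^1$ with a boundary, the coefficients of $B_F$ lie in the DCC set $\Phi$, and ACC for lc thresholds on curves (elementary here) plus the fact that $\deg(K_{\mathbb{P}^1}+B_F)=0$ forces the coefficients into a finite set; then $(X_\eta,B_\eta)$ is automatically lc (indeed klt), and the curve-base canonical bundle formula of \cite{benozzocanonicalbundleformula} applies once $p>p_0(\Phi)$. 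For cases (2) and (3), $\dim Z=1$ and the fibers are surfaces; here I invoke Alexeev's boundedness of $\epsilon$-lc (or bounded-index) del Pezzo-type surfaces to see that, for $p$ large, the general fiber $(F,B_F)$ lies in a bounded family, so its singularities are lc (even $\epsilon'$-lc for a uniform $\epsilon'$) and the known canonical bundle formula for fibrations with lc general fiber over a curve applies. In case (2) the bounded Gorenstein index $I$ of $K_X+B$ near the generic fiber feeds directly into the boundedness input; in case (3) the pair $(\epsilon,\Phi)$ does.

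The technical glue that needs care: I must check that "the canonical bundle formula holds for $(X,B)/Z$" in the sense of the Conjecture --- i.e. $B_Z$ is a genuine $\mathbb{Q}$-boundary, $M_X$ is $b$-nef and $M_Z$ is pseudo-effective --- is inherited along the reductions I make (crepant birational modifications of $X$, small $\mathbb{Q}$-factorializations, and MMP$/Z$, all of which preserve the Fano-type/klt hypotheses by the discussion in the Preliminaries), and that the moduli $b$-divisor pulls back correctly. I would also need the observation that $B_Z$ has coefficients in a DCC set, which follows from Theorem 3.1-type adjunction together with the DCC/finiteness of the fiber coefficients, and then boundedness of the base $Z$ (a curve or surface) ensures $B_Z$ is actually a $\mathbb{Q}$-divisor. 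The passage from "general closed fiber bounded" to "generic fiber lc" should be handled by spreading out / semicontinuity of log discrepancies in the family, exactly as in the characteristic-$0$ arguments of \cite{birkar2019antipluricanonicalsystemsfanovarieties}, which are characteristic-free once boundedness is known.

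**Main obstacle.** The crux is obtaining the uniform-in-$p$ boundedness (or at least uniform lc-ness) of the general fiber. For curve fibers (case 1) this is soft. For surface fibers (cases 2 and 3) one genuinely needs boundedness of del Pezzo-type surface pairs with either bounded index or $\epsilon$-lc singularities and bounded coefficients; Alexeev's theorem gives $\epsilon$-lc boundedness, but one must be careful that it applies with the coefficient/index constraints at hand and that the threshold $p_0$ depends only on the stated data $(I)$ or $(\epsilon,\Phi)$ and not on the individual fibration --- this is where a compactness/Noetherian-induction argument over the (bounded) family of possible fibers is essential, and where the restriction to large $p$ is really being used (to rule out the pathological fibrations of \cite[3.5]{Witaszek2017OnTC} that exist in small characteristic).
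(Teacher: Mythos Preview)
Your overall strategy matches the paper's: reduce to the curve-base/curve-fibre canonical bundle formulas of Witaszek and Benozzo by showing the general fibre is lc, using boundedness of low-dimensional Fano-type varieties as input. However, there is a genuine gap.

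The step you describe as characteristic-free --- passing from boundedness of general closed fibres to lc-ness of the (geometric) generic fibre via ``spreading out / semicontinuity'' --- is precisely where positive characteristic bites. The pair $(X_\eta,B_\eta)$ over the imperfect field $k(Z)$ may be klt, but its base change to $\overline{k(Z)}$ need not be: the geometric generic fibre can fail to be normal, and a horizontal component $D$ of $B$ with $D\to Z$ inseparable restricts to a general fibre with multiplicity $p^{k_D}$, so the coefficients of $B_F$ lie in $p^{k_D}\Phi$, not $\Phi$. Your assertion in case~(1) that ``the coefficients of $B_F$ lie in the DCC set $\Phi$'' already fails without ruling this out, and your claim that the restriction arguments are ``characteristic-free once boundedness is known'' is exactly backwards.

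The paper's proof handles this by explicit degree estimates. In case~(1): if $X_\eta$ is not geometrically normal, the conductor formula $K_Y+(p-1)C\sim\pi^*K_{X_\eta}$ on the normalisation forces $p\le 3$; if some horizontal $D_j$ is inseparable over $Z$, then $0=\deg(K_F+B_F)\ge -2+p\min\Phi$ forces $p\le 2/\min\Phi$. In cases~(2)--(3): after an MMP making $B$ ample$/Z$, the del Pezzo generic fibre lies in a bounded family over $\Spec\mathbb Z$ (the paper cites \cite{bernasconi2024boundinggeometricallyintegraldel} rather than Alexeev, since one needs boundedness over non-closed fields), and then a substantial argument --- using the classification of dual graphs of klt surface singularities to bound $[\kappa(E_i):\kappa(x)]\le 4$, hence show the exceptional curves of a log resolution are geometrically integral and smooth for $p>3$, and bounding the genera of the strict transforms $B_i$ via the conductor formula --- establishes that $(X_\eta,B_\eta)$ is \emph{geometrically} klt once $B_\eta$ is geometrically reduced; the latter is again a degree estimate $p\min(\Phi)/N\le M$ against the bounded very ample $H$. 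This inseparability control is the heart of the proof and cannot be replaced by a semicontinuity appeal.

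(Minor point: in case~(1) the relevant reference is \cite{Witaszek2017OnTC}, since the relative dimension is $1$; \cite{benozzocanonicalbundleformula} is used for the curve-base surface-fibre cases~(2)--(3).)
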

\begin{proof}
    The idea of the proof is to reduce to the case where the general fibers are lc and the base or the fiber are smooth curves, and then apply the known results.
    
    When dim $Z=2$. In this case, the general fibers are curves. Replacing $X$ by its resolution $X'$, we have a crepant model $(X',B'=B^+-B^-)\to (X,B)$, where $(X',B')$ is sub-lc. Also, replace $Z$ by its smooth locus, one may assume that $f:(X,B)\to Z$ is a fibration between smooth quasi-projective varieties. Moreover, the image of the exceptional divisors of $X'\to X$ under $f$ is not surjective on $Z$ since codim$(Sing(X)/X)\geq 2$, hence the horizontal parts of $B'$ and $B$ are the same.
        
        Consider $X_\eta$ the generic fiber of $X/Z$, then it is projective normal, Fano type and Gorenstein as $X$ is smooth. Suppose $\pi:Y\to X_\eta$ is a normalization of an irreducible component of $X_{\bar\eta}$, then by the behavior of canonical divisor under inseparable base changes \cite[1.1]{patakfalvi2018singularitiesgeneralfiberslmmp}, $K_Y+(p-1)C\sim\pi^*K_{X_\eta}$ where $C$ is an integral effective divisor (not 0 if $X_\eta$ is not geometrically normal), hence $K_Y\sim-(p-1)C+\pi^*K_{X_\eta}$ is anti-ample since deg$(-K_{X_\eta})>0$. Moreover $-2\leq$deg$(K_Y)\leq$deg$((1-p)C)\leq 1-p$, which implies $p\leq 3$, so we choose $p_0>3$.

        Since $X/Z$ admits normal geometric generic fiber, in the sense that general fiber $F$ is a projective smooth curve. Consider $(F,B_F)$, if a component $D$ of $B$ is vertical, then its contribution in $B_F=B|_F$ is $0$ as $F$ is a general fiber. So $B|_F=B^{hor}|_F$ and we only need to consider the horizontal part which impacts the singularity of the general fiber. For a horizontal component $D\in Supp(B)$, we can move the general fiber such that $D|_F$ has no multiplicity except the inseparable part. 
        
        Consider the surjection $D^\nu\to Z$ with inseparable degree $p^{k_D}$ of the field extension $K(D^\nu)/K(Z)$, then by the basic intersection theory, $D|_F=\sum p^{k_D} D_i$ where $D_i$ are reduced divisors. Hence, set $B^{hor}=\sum a_jD_j$ where $a_j\in \Phi$, we have $0\sim_\mathbb Q (K_X+B)|_F=K_F+B^{hor}_F=K_F+\sum_j\sum_ip^{k_{D_j}}a_jD_{j,i}$. If some $k_{D_j}>0$, then calculating the degree on both sides, one sees that $0\geq \text{deg}(K_F)+p\min(\Phi)\geq-2+p\min(\Phi)$, hence $p\leq \frac{2}{\min(\Phi)}$. Set $p_0\geq\frac{2}{\min(\Phi)}$ there is no inseparable horizontal part and the general fibers are lc, by \cite{Witaszek2017OnTC} the canonical bundle formula holds in this case. 
        
        In fact,  we have proved that the geometric generic fibre $X_{\bar{\eta}}$ is a smooth rational curve since it is normal and has a canonical divisor with a negative degree. Moreover we have proved that the horizontal part of $B$ is separable over $Z$, hence they are given by a union of rational sections of $X/Z$. We can use the arguments in the proof of \cite[3.1]{Witaszek2017OnTC} to see that $M_Z$ is semi-ample outside a codimension-$2$ closed subset, hence semi-ample since $Z$ is a surface. 
        
        When dim $Z=1$, the general fibers are now surfaces, and from now on one assumes $p>5$.  As $X\to Z$ is a contraction, we have that the generic fiber $X_\eta$ is defined on $\eta=k(Z)$ and $H^0(X,\mathcal O_X)=\eta$. Firstly, after running $K_X+(1+\upsilon)B\sim_\mathbb Q\upsilon B$-MMP on $(X,B)/Z$, we end with a good minimal model $(X',B')$ by abundance and LMMP and $(X,(1+\upsilon)B)$ is klt with $\upsilon$ small enough. Suppose $(X',B')\to (Y,B_Y)/Z$ is the semiample fibration, then one sees that $h:(X,B)\to (Y,B_Y)/Z$ is crepant and $-K_Y\sim_\mathbb QB_Y/Z$ is ample$/Z$ and $B_Y=h_*B$. Since $h:(X,B)\to (Y,B_Y)/Z$ is crepant, the construction of the canonical bundle formula is compatible and one can assume that $B$ is ample$/Z$. Now $(X_\eta,B_\eta)$ is $\epsilon=\frac{1}{I}$-lc and $X_\eta$ is a del Pezzo surface, hence $X_\eta$ falls in a bounded set $\mathcal P/Spec\mathbb Z$ by \cite{bernasconi2024boundinggeometricallyintegraldel}.

        Now there is a very ample divisor $H$ on $X_\eta$ with $-K_X\cdot H=B\cdot H$ bounded and $H^2$ bounded by some natural number $M$. Moreover $H$ induces a closed immersion $X_\eta\to \mathbb P^N_\eta$, hence its base change to algebraic closure $\bar H$ will also induce a closed immersion $X_{\bar\eta}\to \mathbb P^N_{\bar\eta}$, which implies that the geometric generic fiber also falls in a bounded set over $Spec\mathbb Z$. Consider the $\eta$-non-smooth (closed since $X_\eta$ is geometrically normal) points and on $X_\eta$ and the non-snc points of $B$, we call them $x_i$'s. As $X_\eta$ is a klt surface, we see $X_\eta$ is $\mathbb Q$-factorial and the Cartier index of $K_{X_\eta}$ and $B$ near $x_i$ is bounded by some natural number $N$ and, moreover, the defining equations of $X_\eta$ have bounded degrees $N$ since $(X_\eta,B_\eta)/k$ falls in a bounded set$/Spec\mathbb Z$. Thus, for the closed points $x_i$, $\kappa(x_i)/\eta$ has a degree bounded by $N$. Hence, for $p>N$, $\kappa(x_i)/k$ is separable. Now we are going to show that the pair $(X_\eta,B_\eta)$ is geometrically klt if $B_\eta$ is geometrically reduced.

        We denote $X$ to be $X_\eta$ for convenience in the following two paragraphs, the following arguments are similar to those in \cite{sato2024generalhyperplanesectionslog}. In fact, after shrinking $X$, we only need to consider a surface singularity $(x\in X,B)$. If $x\in X$ is smooth and $B$ is snc near $x$, since $\kappa(E)/k$ is smooth for each component $E$ of $B$, then the components of $B$ and the intersections are smooth over $\kappa(x)$, and therefore $(x\in X,B)$ is geometrically log smooth and the discrepancies will remain the same after base change to geometric case. So we may assume that $(x\in X,B)$ is an isolated (geometric) surface singularity. Pick a log resolution of singularity $f:Y\to X$ near $x$, say Exc$(x)=\sum\limits_iE_i$ is the sum of the exceptional divisors on $Y$, which is a scheme defined over $\kappa(x)$. Suppose $l$ is a purely inseparable field extension of $\kappa(x)$, then for any irreducible $\kappa(x)$-scheme $X$, $X\times_{\kappa(x)}l$ is homeomorphic to $X$ as topological spaces and hence also irreducible. Hence, we can take a finite separable field extension $k'$ of $\kappa(x)$ such that the irreducible components of $E_i\times_{\kappa(x)}k'$ are geometrically irreducible. Since $k'/\kappa$ is finite separable, and hence a single extension by a monic polynomial $g(t)\in \kappa[t]$. Suppose $X=Spec(R)$ with $\kappa=R/\mathfrak m$, take a monic lift $\tilde g(t)\in R[t]$ of $g(t)$, we have that $X'=Spec(R[t]/\tilde g(t))\to Spec(R)=X$ is finite étale and surjective, with the corresponding $x'\to x$ realizing $\kappa\to k'$.  Then $Y'=Y\times_XX'$ is a minimal log resolution of singularity by étale descent, whose components of exceptional locus are exactly such components of $E_i\times_\kappa k'$ and hence geometrically irreducible. Also $X'$ is klt with the same discrepancies on certain components by étale descent, and hence has rational singularities. Hence, we have $g(E_i)=0$ and by the classification of the dual graph \cite[A.3]{sato2024generalhyperplanesectionslog}, we have $\dim_{\kappa}H^0(E_i,\mathcal O_{E_i})\leq 4$. Hence $K=H^0(E,\mathcal O_E)$ is separable over $\kappa(x)$ and hence over $k$ if $p>3$, which means that $E$ is geometrically reduced as $E$ is reduced$/K$ which can be realized as a conic in $\mathbb P_{K}^2$ and no purely inseparable field extension $L/K$ will make $E_L$ to be a $p$-power multiple of the certain divisor since $E_L=\mathcal O_{\mathbb P_L^2}(2)$ and $p>2$.

        Now $E_i$ are geometrically integral and their base change to $\bar \kappa(x)$ are integral curves with arithmetic genus $0$, hence $\mathbb P^1$, which are smooth. Moreover, we have $B_i$'s that are geometrically reduced (and hence integral by a finite étale base change) by the assumption, hence we have the exceptional locus together with the strict transformation of $B$ is snc and geometrically integral with smooth exceptional divisors. Consider each $B_i$ which is a geometrically integral regular curve over $k$, suppose it is not normal, with the normalization of the geometric model $C_i$, then we have $K_{C_i}+(p-1)D\sim_\mathbb Q K_{B_i}|_{C_i}$. Count the degree, since $B_i$ is Gorenstein and by Riemann-Roch, we have $2g_C-2+(p-1)deg(D)=2g_{B_i}-2$ and hence $g_{B_i}\geq\frac{p-1}{2}$. However, in our case $HB_i$ is bounded, which means $g_{B_i}$ is bounded by some $G$, hence this could not happen if $p>2G+2$. Hence, all $E_i$'s and $B_i$'s are smooth$/k$. View $Exc(x)+\tilde B$ as a scheme over $k$, then all the extension degrees of $h^0$ of the components are bounded, hence when $p$ is large enough, all intersections $E_i\cap E_j$ or $E_i\cap B_j$ are smooth over $x\in X$. Hence $(Y,Exc(x)+\tilde B)$  is snc after arbitrary field base change and we see that all the discrepancies would remain the same after base change to the algebraic closure.
        
        Now, return to our initial fibration case, we are going to show that $B_\eta$ is geometrically reduced. Suppose $B=B^{hor}+B^{ver}=\sum a_jD_j+B^{ver}$, then the vertical part will make no matter on the singularity of the fibers. After restriction to a general fiber we have $0\sim_\mathbb Q (K_X+B)|_F=K_F+\sum_ja_{j}p^{k_{D_j}}D_{j,F}^{red}$ where $p^{k_{D_j}}$ is the inseparable degree of (the Stein factorization of) the map $D_j\to Z$. Then since $(F,B_F)$ falls into a bounded set and $-\bar H\cdot K_F\leq M$, we have $\bar H\cdot D_{j,F}^{red}\geq \frac{1}{N}$ as $NB$ is Cartier near the generic fiber by boundedness, if there is some $k_{D_j}>0$, then $M\geq -K_F\cdot \bar H\geq pa_jD_{j,F}^{red}\cdot\bar H\geq \frac{p\min(\Phi)}{N}$, hence a contradiction when $p>\frac{NM}{\min(\Phi)}$ or just $p>NMI$. Thus when $p$ is large enough, we see there is no inseparable part both in the map from the exceptional divisors and the components of $B$ to $Z$, in particular the log discrpancies are kept and hence $(F,B_F)$ is geometrically lc and by \cite[0.2]{benozzocanonicalbundleformula}, the canonical bundle formula holds in this case.

        For the assertion (3), we only need to treat the case when $B$ is relatively ample by running $K_X+(1+\upsilon)B$-MMP on $X/Z$. Here the generic fiber is an $\epsilon$-lc del Pezzo surface and hence falls in a bounded set. Since the coefficients of $B$ falls in a finite set, $(X_\eta,B_\eta)$ falls in a log bounded set. So there is a uniform $I\in\mathbb N$ such that $I(K_{X_\eta}+B_\eta)$ is Cartier. Then the assertion (3) follows from the assertion (2).

\end{proof}
\begin{remark} 
    The method in the proof of Theorem 3.3 is difficult to generalize to fibrations of Calabi-Yau type or of general type in positive characteristic since it relies on positivity (or negativity) and  boundedness of the geometric generic fiber to control the singularity of general fibers. 

    One usually expects that $\epsilon$-lc del Pezzo surfaces are geometrically normal for sufficiently large characteristics like the regular case, c.f. \cite[1.5]{patakfalvi2018singularitiesgeneralfiberslmmp}, which essentially removes the condition that we have normal general fibers and makes the proposition more general.  However, the author does not yet know if this is true. But another interesting observation is that if $p$ is large enough with exactly the same conditions but $X_\eta$ is not geometrically normal and geometrically $\mathbb Q$-factorial, then the normalization of general fibers $(X_g,B_g)$ cannot be lc hence the canonical bundle formula will seldom apply (in particular, the general fibers cannot admit klt singularities). In fact, if $X$ is a geometrically reduced variety$/k$ which falls into a bounded set$/Spec\mathbb Z$, consider the normalization $Y$ of $X_{\bar k}$, then we have $K_Y+(p-1)C+B_Y\sim_\mathbb Q(K_X+B)|_Y$.  Since $(X,B)$ falls into a bounded set$/Spec\mathbb Z$ and $Y$ is $\mathbb Q$-factorial, the Cartier index of $Y$ is bounded by a uniform $N$. So, if $Y$ is lc but is not isomorphic to $X_{\bar k}$, then $K_Y+B_Y$ is not nef as $C> 0$. By the cone theorem there is some rational curve $E$ such that $0<-(K_Y+B_Y)\cdot E\leq 2\text{dim}Y=4$. However, $-(K_Y+B_Y)\cdot E=(p-1)C\cdot E\geq \frac{p-1}{N^2}$, which is a contradiction when $p>4N^2+1$. 
\end{remark}
With some characteristic free arguments on the geometry of non-klt centers, we can prove an adjunction formula on non-klt centers for threefolds.
\begin{theorem}
    $(X,B)$ is a projective klt pair of dimension $3$ over an algebraically closed field $k$, $G\subset X$ a subvariety with normalization $F$, $X$ is $\mathbb Q$-factorial near $G$, $\Delta\geq 0$ a $\mathbb Q$-Cartier divisor and $(X,B+\Delta)$ is lc near $G$ and there is a unique non-klt place of this pair whose center is $G$. Let $\Phi$ be a finite set of rationals in $(0,1]$ such that $B\in\Phi$. Then there are a $p_0=p_0(\Phi)\in\mathbb N$ and a DCC set $\Psi=\Psi(\Phi)$ such that for all char$(k)>p_0$, there is a $\mathbb Q$-divisor $\Theta_F\in[0,1]$ such that the following hold:
    \begin{enumerate}
        \item There is $P_F$ on $F$ such that 
        \[K_F+\Theta_F+P_F\sim_\mathbb Q (K_X+B+\Delta)|_F,\]
        here $\Theta_F$ is well-defined and $P_F$ is determined up to $\mathbb Q$-linear equivalence and $P_F$ is pseudo-effective.
        \item $\Theta_F\in\Psi$.
    \end{enumerate}
\end{theorem}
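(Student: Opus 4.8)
The plan is to factor the adjunction onto the non-klt center through two adjunctions that are already available: first a divisorial adjunction onto the unique non-klt place, and then the canonical bundle formula of Theorem 3.3 applied to the induced fibration onto $F$. Concretely, let $\phi\colon W\to X$ be a log resolution of $(X,B+\Delta)$ (which exists in dimension $3$ by Theorem 2.1) such that the unique non-klt place over $G$ appears as a divisor $S\subset W$; write $K_W+S+B_W=\phi^*(K_X+B+\Delta)$, where, by uniqueness of the non-klt place and since $(X,B+\Delta)$ is lc near $G$, the divisor $B_W$ has all coefficients $<1$ at the generic point of $S$ and $(W,S+B_W)$ is log smooth there. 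After possibly blowing up $W$ further and replacing $S$ by its strict transform, the morphism $S\to G$ induced by $\phi$ factors through the normalization $F$, and we arrange that $g\colon S\to F$ is a contraction; this last point — that the unique non-klt place maps to $F$ with connected fibres — is the positive-characteristic analogue of the normality/connectedness of non-klt centers, and I expect it to require the genuine work (see the last paragraph). The $\mathbb Q$-factoriality of $X$ near $G$ is used to run the relevant MMP's and to keep the divisorial adjunction in the form of Theorem 3.1. When $\dim G=2$ one may take $S=G^\nu=F$ and only the first step is needed; when $\dim G=0$ the statement is essentially vacuous; so the substantive case is $\dim G=1$, with $S$ a surface and $F$ a curve.

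For the first step, apply the divisorial adjunction of Theorem 3.1 to $(W,S+B_W)$ along $S$: there is a canonically determined boundary $B_S\geq 0$ on $S$ with
\[K_S+B_S\sim_{\mathbb Q}(K_W+S+B_W)|_S=\phi^*(K_X+B+\Delta)|_S,\]
and, since the part of $B_W$ coming from $B$ has coefficients in $\Phi$, the corresponding contributions to $B_S$ lie in the DCC set $\mathfrak S_\Phi$ of Theorem 3.1; the remaining contributions come from $\phi$-exceptional divisors and from $\Delta$. Because $\phi^*(K_X+B+\Delta)$ is pulled back from $X$ and $S$ maps to $G$ through $F$, the divisor $\phi^*(K_X+B+\Delta)|_S$ is $\mathbb Q$-linearly trivial over $F$; hence $(S,B_S)$ is an lc pair with $K_S+B_S\sim_{\mathbb Q}0/F$, which is exactly the input for a canonical bundle formula. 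The klt hypothesis on $(X,B)$ is used here to ensure that $B_S$ genuinely has coefficients $\leq 1$ (no vertical component of $B_S$ is forced to coefficient $1$ by a hidden non-klt place) and to control the exceptional contributions.

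For the second step, first arrange, by further resolving $S$ over $F$, that the general fibres of $g$ are normal (smooth curves), so that the hypotheses of Theorem 3.3 (equivalently, of \cite{Witaszek2017OnTC},\cite{benozzocanonicalbundleformula} for a surface over a curve) are met — this is legitimate precisely because $\dim X=3$ forces $\dim S\leq 2$, so the fibration lies within the range where a canonical bundle formula is available in characteristic $p$; this is where the dimension hypothesis is essential. Applying it to $(S,B_S)/F$ for $\mathrm{char}(k)=p>p_0(\Phi)$ produces the discriminant divisor $\Theta_F$ and a moduli divisor $P_F$ with
\[K_F+\Theta_F+P_F\sim_{\mathbb Q}K_S+B_S\sim_{\mathbb Q}(K_X+B+\Delta)|_F,\]
where $\Theta_F=\sum_T\bigl(1-\mathrm{lct}_{\eta_T}(S,B_S;g^*T)\bigr)T\in[0,1]$ is intrinsically determined (so well defined independently of the models, being read off from lc thresholds of $(X,B+\Delta)$ over the divisors of $F$), $P_F$ is determined up to $\mathbb Q$-linear equivalence by the displayed relation, and $P_F$ is nef — in fact semiample, by Theorem 3.3 and the cited results — hence pseudo-effective. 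When $\dim G=2$, $\Theta_F$ is just the different and $P_F=0$, consistently.

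Finally, to produce the DCC set $\Psi=\Psi(\Phi)$ and to fix $p_0$: the coefficient of $\Theta_F$ along a prime $T\subset F$ depends only on the coefficients of $B_S$ along the components of the fibre $g^*T$, i.e. on the vertical components of $B_S$; these arise from $\phi$-exceptional divisors and from components of $B$ and of $\Delta$ lying over proper closed subsets of $G$. The $B$-contributions are in $\mathfrak S_\Phi$; for the exceptional and $\Delta$-contributions one uses that $(X,B+\Delta)$ is lc with $G$ an lc center, which bounds the relevant multiplicities of $\Delta$ in terms of the $\Phi$-coefficients meeting the same center, together with ACC for log canonical thresholds in dimension $\le 2$ (available in characteristic $p$), so that each $1-\mathrm{lct}_{\eta_T}$ lies in a fixed DCC set depending only on $\Phi$; collecting these gives $\Theta_F\in\Psi(\Phi)$. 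One takes $p_0=p_0(\Phi)$ large enough that Theorem 3.3 (and the surface-over-curve canonical bundle formula) applies, that the ACC threshold input holds, and that the geometry of the non-klt place is tame. That last item is the real obstacle: showing that in characteristic $p$ the unique non-klt place $S$ maps to $F$ as a contraction with normal general fibre, and that discrepancies are preserved under the base changes involved. In characteristic $0$ this rests on normality of lc centers and Kollár–Shokurov connectedness, neither unconditional in positive characteristic; instead one must exploit the boundedness of the geometric generic fibre and the large-$p$ hypothesis, exactly as in the proof of Theorem 3.3 and as in \cite{Das_2016}, to force the needed tameness.
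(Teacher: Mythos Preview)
Your overall two-step strategy --- divisorial adjunction onto the unique non-klt place, then the canonical bundle formula of Theorem~3.3 onto $F$ --- is the same as the paper's, but there are two genuine gaps.

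The principal one is in your construction of $\Theta_F$ and the DCC claim. You take $\Theta_F$ to be the discriminant of the \emph{full} boundary $B_S$ (which carries the contributions of $\Delta$), and then assert that the lc hypothesis ``bounds the relevant multiplicities of $\Delta$ in terms of the $\Phi$-coefficients.'' It does not: the coefficients of $\Delta$ are arbitrary rationals, constrained only to lie in $[0,1]$ by the lc condition, and $[0,1]$ is not DCC. Consequently the coefficients of $B_S$ are not in any DCC set, ACC for lc thresholds does not apply, and your discriminant need not lie in a fixed $\Psi(\Phi)$. The paper circumvents this by \emph{not} using the full boundary to define $\Theta_F$: on the dlt model $Y$ it introduces the auxiliary divisor $\Sigma_Y=B^\sim+\exc(\psi)$, whose coefficients lie in $\Phi\cup\{1\}$, adjoins to get $\Sigma_S$ with coefficients in a DCC set $\Phi'$ depending only on $\Phi$, and defines $\Theta_F$ as the discriminant of $(S,\Sigma_S)$. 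Since $\Sigma_S\leq\Gamma_S+N_S$ (the full boundary that is actually $\mathbb Q$-trivial over $F$), one has $\Theta_F\leq D_F$, and the non-negative difference $D_F-\Theta_F$ is absorbed into $P_F$, which therefore stays pseudo-effective. This separation of the $\Delta$-contribution from the $B$-contribution is the missing idea.

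The second gap is the choice of model. On the log resolution $W$ the crepant write-up $K_W+S+B_W=\phi^*(K_X+B+\Delta)$ yields only a \emph{sub}-boundary $B_W$ (exceptional divisors with log discrepancy $>1$ appear with negative coefficient), so Theorem~3.1 does not apply as stated, and $S\to F$ need not have connected fibres. Your proposed fix --- further blow-ups --- goes in the wrong direction. The paper instead runs an MMP on $(W,\Gamma_W)/X$ down to a $\mathbb Q$-factorial dlt model $(Y,\Gamma_Y)$; there the relevant boundary is effective and, by the characteristic-free statement \cite[2.33]{birkar2019antipluricanonicalsystemsfanovarieties}, there is a unique $S\subset\lfloor\Gamma_Y\rfloor$ over $G$ and $h\colon S\to F$ is a contraction. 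So the connectedness issue you flag as the ``real obstacle'' is handled by a standard MMP reduction, while the gap you did not flag --- defining $\Theta_F$ independently of $\Delta$ --- is where the actual work lies.
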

\begin{proof}
    If $G$ is a divisor, then the results follow from the divisorial adjunction formula. If $G$ is a point, then this is trivial. From now on, we assume dim$G= 1$. 

    Let $\phi:W\to X$ be a log resolution of $(X,B+\Delta)$ that extracts the place above $G$. Define $\Gamma=(B+\Delta)^{<1}+\text{Supp}(B+\Delta)^{\geq 1}$ and $\Gamma_W$ as the sum of the strict transformation $\Gamma^\sim$ and the reduced exceptional divisor of $\phi$. Running an MMP on $(W,\Gamma_W)/X$ we reach a model $\psi:(Y,\Gamma_Y)\to X$ such that $K_Y+\Gamma_Y$ is a limit of movable divisors over $X$. Now we set $N_Y:=\psi^*(K_X+B+\Delta)-(K_Y+\Gamma_Y)$. As $\psi_*(N_Y)=N:=B+\Delta-\Gamma\geq 0$, and $-N_Y$ is a limit of movable divisors$/X$, so by the general negativity lemma, $N_Y$ is effective. Let $U\subset X$ be the set of points where $(X,B+\Delta)$ is lc, then $N_Y=0$ over $U$ and $(Y,\Gamma_Y)$ is a $\mathbb Q$-factorial dlt model of $(X,B+\Delta)$ over $U$ by construction and $U$ contains $G$. Then by \cite[2.33]{birkar2019antipluricanonicalsystemsfanovarieties}, no non-klt center contains $G$ except $G$, so there is only one component $S$ of $\lfloor \Gamma\rfloor$ mapping onto $G$, which has coefficient $1$ so the image of $N_Y$ does not contain $G$. Again, by \cite[2.33]{birkar2019antipluricanonicalsystemsfanovarieties}, the natural morphism $h:S\to F$ predicted by the property of normalization is a contraction. By divisorial adjunction we have 
    \[K_S+\Gamma_S+N_S=(K_Y+\Gamma_Y+N_Y)|_S\sim_\mathbb R 0/F\]
    where $N_S=N_Y|_S$ is vertical over $F$ since the generic point is not mapped. Moreover $(S,\Gamma_S+N_S)$ is lc by inversion of adjunction. We consider $\Sigma_Y=B^\sim+exc(\psi)\in\Phi\cup \{1\}$ and set $K_S+\Sigma_S\sim_\mathbb Q(K_Y+\Sigma_Y)|_S$, one sees that $\Sigma_S\in \Phi'$ for some DCC set $\Phi'$ depending only on $\Phi$ by divisorial adjunction. By definition $\Sigma_S\leq\Gamma_S+N_S$, as $S\to F$ is a morphism from a surface to a curve, so by Theorem 3.3 the canonical bundle formula applies for $(S,\Gamma_S+N_S)$ for all char$(k)>p_0$ with some $p_0=p_0(\Phi')=p_0(\Phi)$. We have $K_S+\Gamma_S+N_S\sim_\mathbb Qh^*(K_F+D_F+M_F)$ where $D_F$ is the discriminant part and $M_F$ is pseudo-effective moduli divisor. Since the adjunction for fiber space is determined up to $\mathbb Q$-linear equivalence, so $K_F+D_F+M_F\sim_\mathbb Q(K_X+B+\Delta)|_F$ by chasing the pullback diagram:
    \[\begin{tikzcd}
	S & Y \\
	F & X
	\arrow[from=1-1, to=1-2]
	\arrow["h", from=1-1, to=2-1]
	\arrow[from=2-1, to=2-2]
	\arrow["\psi", from=1-2, to=2-2]
\end{tikzcd}\]
    Now let $\Theta_F$ to be discriminant part of $(S,\Sigma_S)$ as usual. Consider $P_F\sim_\mathbb RD_F-\Theta_F+M_F$, which is pseudo-effective as $D_F\geq\Theta_F$ by definition. As a result, the coefficients of $\Theta_F$ belong to
    \[\Psi:=\{1-a|a\in LCT_{2}(\Phi')\}\cup \{1\}\]
    where $LCT_{2}(\Phi')$ stands for the set of all lc thresholds of integral effective divisors with respect to pairs $(S,\Sigma)$ of dimension $2$ such that $\Sigma\in \Phi'$, which is an ACC set by ACC of lc thresholds in dimension 2. So $\Psi$ is also a DCC set.
\end{proof}
\begin{theorem}
    Assume $X,G,F,\Delta,B,\Phi,p_0,\Psi,\Theta_F,P_F$ defined as above, $M\geq 0$ is a $\mathbb Q$-Cartier divisor on $X$ with coefficients $\geq 1$ and $G$ is not a subset of $\text{Supp}M$, then for every component $D$ of $M_F:=M|_F$, we have $\mu_D(\Theta_F+M_F)\geq 1$.
\end{theorem}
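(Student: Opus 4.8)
The plan is to proceed exactly as in the proof of Theorem 3.5. The cases $\dim G=0$ (trivial) and $\dim G=2$ reduce to divisorial adjunction — in the divisor case the local computation below shows directly that $M|_F$ already has coefficient $\ge 1$ along each of its components — so I concentrate on $\dim G=1$ and retain all the notation of that proof: the $\mathbb{Q}$-factorial dlt model $\psi\colon Y\to X$, the unique component $S$ of $\lfloor\Gamma_Y\rfloor$ dominating $G$ (which is normal, of coefficient $1$), the induced contraction $h\colon S\to F$ onto the smooth curve $F$, the boundary $\Sigma_Y=B^{\sim}+\exc(\psi)$ with $K_S+\Sigma_S=(K_Y+\Sigma_Y)|_S$, and $\Theta_F$ the discriminant of $(S,\Sigma_S)/F$, so that $\mu_D(\Theta_F)=1-t_D$ with $t_D:=\lct_{\eta_D}(S,\Sigma_S,h^{*}D)$ (the log canonical threshold computed near the fibre $h^{-1}(D)$) for each closed point $D\in F$. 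Fixing a component $D$ of $M_F=M|_F$, I may assume $t_D>0$ (otherwise $\mu_D(\Theta_F)=1$ and there is nothing to prove), and it then suffices to show $\mu_D(M_F)\ge t_D$, since $\mu_D(\Theta_F+M_F)=(1-t_D)+\mu_D(M_F)$.

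First I would observe that $M_S:=(\psi^{*}M)|_S$ is well defined, because $G\not\subseteq\Supp M$ forces $S\not\subseteq\Supp(\psi^{*}M)$, and that by functoriality of pull-back of $\mathbb{Q}$-Cartier divisors — applied to the two factorizations of the natural map $S\to X$ through $Y$ and through $F$ — one has $M_S=h^{*}M_F$. Writing the fibre as $h^{*}D=\sum_k c_k C_k$ with integers $c_k\ge 1$, this gives $M_S=\mu_D(M_F)\sum_k c_k C_k$ near $h^{-1}(D)$; in particular every $C_k$ is a component of $M_S$ (note $\mu_D(M_F)>0$ since $D\in\Supp M_F$), and hence $C_k\subseteq\Supp M_S\subseteq\Supp(\psi^{*}M)$. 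Thus each $C_k$ lies either in the strict transform $M^{\sim}$ of $M$ or inside some $\psi$-exceptional component $E$ of $\psi^{*}M$. In the latter situation $E\in\exc(\psi)$, so $E$ has coefficient $1$ in $\Sigma_Y$ and $E|_S$ contains $C_k$ with coefficient $\ge 1$; as the different dominates the naive restriction, divisorial adjunction gives $\mu_{C_k}(\Sigma_S)\ge 1$, whereas sub-log-canonicity of $(S,\Sigma_S+t\,h^{*}D)$ near $C_k$ requires $\mu_{C_k}(\Sigma_S)+t\,c_k\le 1$, forcing $t\le 0$ and so $t_D=0$ — contrary to assumption. Therefore every $C_k$ lies in $M^{\sim}$; since the components of $M^{\sim}$ have coefficient $\ge 1$, $C_k$ occurs in $M^{\sim}|_S$ with coefficient $\ge 1$, and since $\psi^{*}M-M^{\sim}\ge 0$ the same holds in $M_S$ — equivalently $\mu_D(M_F)\,c_k\ge 1$ for every $k$.

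To conclude I would combine $\mu_{C_k}(\Sigma_S)\ge 0$ with $\mu_D(M_F)\,c_k\ge 1$ to obtain $\mu_{C_k}(\Sigma_S)+\mu_D(M_F)\,c_k\ge 1$ for every $k$; since sub-log-canonicity of $(S,\Sigma_S+t\,h^{*}D)$ near $C_k$ demands $\mu_{C_k}(\Sigma_S)+t\,c_k\le 1$, this forces $t\,c_k\le 1-\mu_{C_k}(\Sigma_S)\le\mu_D(M_F)\,c_k$, hence $t\le\mu_D(M_F)$; taking the supremum over admissible $t$ yields $t_D\le\mu_D(M_F)$, as required. I expect the one genuinely delicate point to be the dichotomy used above: checking that a component of the fibre $h^{-1}(D)$ not contained in $M^{\sim}$ must instead sit inside a $\psi$-exceptional component of $\psi^{*}M$ — this is where the hypothesis that $D$ is a component of $M_F$ enters, through $C_k\subseteq\Supp M_S\subseteq\Supp(\psi^{*}M)$ — together with the fact that such an exceptional component, lying in $\exc(\psi)$ and hence of coefficient $1$ in $\Sigma_Y$, raises the coefficient of $C_k$ in $\Sigma_S$ to at least $1$. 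The remaining steps are routine bookkeeping with coefficients and the definition of the log canonical threshold.
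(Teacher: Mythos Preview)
Your strategy is close to the paper's --- both arguments come down to showing $\mu_{C_k}(\Sigma_S+M_S)\ge 1$ for every component $C_k$ of the fibre $h^{-1}(D)$ --- but the way you obtain this inequality has a gap. In each branch of your dichotomy you use that if a prime divisor $P$ on $Y$ contains $C_k$, then $\mu_{C_k}(P|_S)\ge 1$ (stated as ``$E|_S$ contains $C_k$ with coefficient $\ge 1$'' in the exceptional case, and ``$C_k$ occurs in $M^{\sim}|_S$ with coefficient $\ge 1$'' in the other). This fails when $Y$ is only $\mathbb Q$-factorial at the generic point of $C_k$: on the threefold $\{xy=z^n\}\times\mathbb A^1$ with $S=\{x=z=0\}$, $P=\{y=z=0\}$ and $C=\{x=y=z=0\}$ one computes $\mu_{C}(P|_S)=\tfrac{1}{n}$. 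The dlt model $Y$ is only $\mathbb Q$-factorial (and $X$ is only assumed $\mathbb Q$-factorial near $G$), so this can occur; consequently your bound $\mu_D(M_F)\,c_k\ge 1$ is unjustified, and ``the different dominates the naive restriction'' alone does not give $\mu_{C_k}(\Sigma_S)\ge 1$ either.

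The repair is not to separate $\Sigma_S$ and $M_S$. Set $\Sigma'_Y:=\Sigma_Y+\psi^*M$ and let $\Sigma'_S$ be obtained from it by divisorial adjunction; since $\psi^*M$ is $\mathbb Q$-Cartier, $\Sigma'_S=\Sigma_S+M_S$. Each $C_k$ lies in some prime component of $\Sigma'_Y$ of coefficient $\ge 1$ --- either a component of $M^{\sim}$ (coefficient $\ge 1$ inherited from $M$) or a $\psi$-exceptional divisor (coefficient $1$ in $\Sigma_Y$) --- and then the adjunction formula itself yields
\[
\mu_{C_k}(\Sigma'_S)\ \ge\ \frac{m-1}{m}+\frac{1\cdot 1}{m}\ =\ 1,
\]
where $m$ is the local Cartier index: the orbifold term $\tfrac{m-1}{m}$ is exactly what compensates for the possible denominator in the naive restriction. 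This is precisely the paper's argument; once $\mu_{C_k}(\Sigma'_S)\ge 1$ for all $k$ one gets $\lct_{\eta_D}(S,\Sigma'_S;h^*D)\le 0$, i.e.\ $\mu_D(\Theta_F+M_F)\ge 1$, directly --- your case split on $t_D>0$ and the detour through $t_D\le\mu_D(M_F)$ become unnecessary.
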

\begin{proof}
    Set $M_Y=M|_Y$ and $\Sigma'_Y=\Sigma_Y+M_Y$, we have $K_S+\Sigma'_S\sim_\mathbb Q(K_Y+\Sigma'_Y)|_S$. Then $M_S=M|_S=h^*M_F$ is $h$-trivial and hence $K_S+\Sigma'_S+\Gamma_S-\Sigma_S+N_S\sim_\mathbb Q0/F$. Since $M$ does not pass through the generic point of $G$, $(S,\Sigma'_S+\Gamma_S-\Sigma_S+N_S)$ is lc over the generic point of $G$. So by Theorem 3.3, the canonical bundle formula applies for $(S,\Sigma'_S+\Gamma_S-\Sigma_S+N_S)$. Let $\Theta'_F$ be the discriminant part of $(S,\Sigma'_S)$, then we have for every component $D$ of $M_F$, $\mu_D(\Theta_F+M_F)=\mu_D(\Theta'_F)=1-\text{lct}_\eta(h^*D,S,\Sigma'_S)$.  For any $D$ component of $M_F$, $h^*D$'s components appear as components of $\lfloor\Sigma_S+M_S\rfloor=\lfloor \Sigma_S'\rfloor$  by divisorial adjunction, for $M\geq 1$ and $S$ is not a component of $M_Y$. So $\text{lct}_\eta(h^*D,S,\Sigma'_S)\leq 0$ as desired.
\end{proof}
The following is a proposition of common use (in a special case) when studying Fano varieties:
\begin{corollary}
    Let $X$ be a threefold of Fano type, $X\to Z$ be a contraction and dim $Z>0$ with normal general fibers. Suppose there is an ample $\mathbb Q$-divisor $A$ on $Z$ and some given $I>0$ such that $I(K_X+B)\sim -If^*A$ are Cartier divisors for some klt pair $(X,B)$, then there is an $p_0=p_0(I)$ such that for every $p>p_0$, such $Z$ is of Fano type.
\end{corollary}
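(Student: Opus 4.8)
\emph{The plan.} The idea is to feed $(X,B)/Z$ directly into the canonical bundle formula of Theorem~3.3 and read off a log Fano structure on $Z$. First note $Z$ is normal, being the base of a contraction from the normal variety $X$. If $\dim Z=3$ then $f$ is birational and $Z$ is of Fano type, being a birational contraction of the Fano type variety $X$ (as recorded in the preliminaries), with no constraint on $p$; so assume $\dim Z\in\{1,2\}$ and write $X_\eta$ for the generic fibre over $\eta=\Spec k(Z)$. The key preliminary observation is that $B$ is \emph{automatically} big over $Z$: since $X$ is of Fano type there is a boundary $\Delta_0\ge0$ with $(X,\Delta_0)$ klt and $-(K_X+\Delta_0)$ nef and big, hence big over $Z$, so $-K_X=-(K_X+\Delta_0)+\Delta_0$ is big over $Z$, i.e. $-K_{X_\eta}$ is big on $X_\eta$. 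Restricting $K_X+B\sim_{\mathbb Q}-f^*A$ to $X_\eta$ kills $f^*A$, so $B_\eta\sim_{\mathbb Q}-K_{X_\eta}$ is big; thus $B$ is big over $Z$, and $K_X+B\sim_{\mathbb Q}-f^*A\sim_{\mathbb Q}0/Z$.

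\emph{Applying Theorem~3.3.} Now $(X,B)/Z$ satisfies the hypotheses of Theorem~3.3: $(X,B)$ klt, $\dim X=3$, $\dim Z>0$, $K_X+B\sim_{\mathbb Q}0/Z$, and $B$ relatively big. If $\dim Z=1$, the general fibres are normal by hypothesis and $I(K_X+B)=-If^*A$ is Cartier, in particular near the generic fibre, so part~(2) of Theorem~3.3 gives, for all $p>p_0(I)$, the canonical bundle formula for $(X,B)/Z$. If $\dim Z=2$, apply part~(1) with $\Phi$ the finite coefficient set of $B$; the resulting bound depends only on $I$, because $X_\eta$ is a regular curve, $IK_{X_\eta}$ is Cartier, and $I(K_{X_\eta}+B_\eta)\sim0$ is an integral Cartier divisor, so $IB_\eta$ is integral and the horizontal coefficients of $B$ lie in $\tfrac1I\mathbb Z\cap(0,1]$, while the computations in the proof of Theorem~3.3 show only the least horizontal coefficient enters the bound; hence $p_0=p_0(I)$. (Alternatively one first replaces the vertical part of $B$ by a vertical divisor with coefficients in a set depending only on $I$, changing neither $K_X+B\sim_{\mathbb Q}0/Z$ nor the relevant fibre data.)

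\emph{Extracting the Fano structure.} The formula reads $K_X+B\sim_{\mathbb Q}f^*(K_Z+B_Z+M_Z)$, with $B_Z\ge0$ the discriminant and $M_Z$ the moduli part. As $(X,B)$ is klt, every coefficient of $B_Z$ is $<1$ and $(Z,B_Z)$ is klt, and $M_Z$ is b-nef: of non-negative degree when $\dim Z=1$, and semiample when $\dim Z=2$ (the latter by the proof of Theorem~3.3). Since $f$ is a contraction, pull-back along $f$ is injective on $\mathbb Q$-linear equivalence classes of $\mathbb Q$-Cartier $\mathbb Q$-divisors, so $K_Z+B_Z+M_Z\sim_{\mathbb Q}-A$, whence
\[
-(K_Z+B_Z)\sim_{\mathbb Q}M_Z+A .
\]
This is ample — semiample plus ample when $\dim Z=2$, of positive degree on the curve $Z$ when $\dim Z=1$ — so $(Z,B_Z)$ is a klt log Fano pair, and $Z$ is of Fano type, for every $p>p_0(I)$.

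\emph{Main obstacle.} The substance of the argument is Theorem~3.3 itself. Granting it, the only delicate point is the case $\dim Z=2$, where one must ensure the characteristic bound coming out of Theorem~3.3(1) depends on $I$ alone and not on the a priori uncontrolled vertical coefficients of $B$ — settled either by inspecting the proof of Theorem~3.3 or by the modification of $B$ indicated above — together with checking in positive characteristic that the discriminant $(Z,B_Z)$ of a klt fibration is genuinely klt, not merely that its coefficients are $<1$.
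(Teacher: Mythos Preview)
Your argument is essentially the same as the paper's: reduce to $\dim Z\le 2$, apply Theorem~3.3 to get $K_X+B\sim_{\mathbb Q}f^*(K_Z+B_Z+M_Z)$, observe $-(K_Z+B_Z)\sim_{\mathbb Q}M_Z+A$ is ample, and conclude $(Z,B_Z)$ is klt log Fano. You are more careful than the paper on two points it leaves implicit --- verifying that $B$ is relatively big (needed to invoke Theorem~3.3 at all) and explaining why, in the $\dim Z=2$ case, the bound $p_0$ depends only on $I$ (since $I(K_X+B)$ Cartier forces $IB$ integral, so the coefficients lie in $\tfrac1I\mathbb Z\cap[0,1]$). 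Conversely, the one point you flag but do not resolve --- that $(Z,B_Z)$ is genuinely klt rather than merely having coefficients $<1$ --- is handled in the paper by first passing to a high log resolution $X'\to Z'$ of the fibration and invoking the compatibility of the canonical bundle formula with crepant birational maps; on the smooth model the coefficient bound immediately gives klt, and this descends.
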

\begin{proof}
    If dim $Z=3$, then this follows from our arguments in Chapter 2. If dim$Z\leq 2$, then $I(K_X+B)$ is Cartier and $K_X+B\sim_\mathbb Q 0/Z$, hence the canonical bundle formula applies by Theorem 3.3. Taking a high resolution $X'\to Z'$ of $X\to Z$ we may assume $X$ and $Z$ to be smooth since the construction of the canonical bundle formula is compatible with crepant birational maps\cite[3.4]{birkar2019antipluricanonicalsystemsfanovarieties}. Consider the coefficients of $B_Z$, each has the form $1-lct_\eta(X,B,f^*D)$, and therefore $<1$ as $(X,B)$ is klt. So $(Z,B_Z)$ is klt, and moreover $K_Z+B_Z+M_Z+A\sim_\mathbb Q0$ by the condition. As $M_Z$ is nef, one sees that $M_Z+A$ is ample and there is some $A'\sim_\mathbb Q M_Z+A$ such that $(Z,B_Z+A')$ is klt. Hence $Z$ is of Fano type by definition.
\end{proof}
There is another kind of adjunction in positive characteristic concerning F-singularities, we usually call it F-adjunctions, which is obtained by the technology of map pairs:  
\begin{theorem}[\cite{schwede2009fadjunction} 5.2,6.17]
    Suppose that $X$ is an integral separated normal F-finite noetherian scheme essentially of finite type over an F-finite field of characteristic $p>0$. Further suppose that $\Delta$ is an effective $\mathbb Q$-divisor on $X$ such that $K_X + \Delta$ is $\mathbb Q$-Cartier with index not divisible by $p$. Let $W\subset X$ be a closed subscheme (or the normalization of a closed subscheme)that satisfies the following properties:
\begin{enumerate}
    \item $W$ is integral and normal.
    \item $(X,\Delta)$ is sharply F-pure at the generic point of $W$.
    \item The ideal sheaf of $W$ is locally a center of sharp F-purity for $(X,\Delta)$.
\end{enumerate}
Then there exists a canonically determined effective divisor $\Delta_W$ on $W$ satisfying the following properties:
\begin{enumerate}
    \item $(K_W +\Delta_W) \sim_\mathbb Q (K_X +\Delta)|_W$.
    \item Furthermore, if $(p^e-1)(K_X + \Delta)$ is Cartier then $(p^e-1)(K_W + \Delta_W )$ is Cartier and $(p^e-1)\Delta_W$ is integral.
    \item $W$ is minimal among the centers of sharp F-purity for $(X,\Delta)$, with respect to the containment of topological spaces (in other words, the ideal sheaf of $W$ is of maximal height as a center of sharp F-purity), if and only if $(W, \Delta_W )$ is strongly F-regular.
    \item There is a natural bijection between the centers of sharp F-purity of $(W,\Delta_W)$ and the centers of sharp F-purity of $(X, \Delta)$ that are properly contained in $W$ as topological spaces.
\end{enumerate}
\end{theorem}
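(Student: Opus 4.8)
The plan is to follow the map-pair approach of \cite{schwede2009fadjunction}, reducing everything to a statement about Cartier-linear maps on normal schemes. First I would work locally on an affine chart $U=\Spec R\subset X$ and fix $e\geq 1$ with $(p^e-1)(K_X+\Delta)$ Cartier, so that we have the line bundle $\mathcal L_{e,\Delta}=\mathcal O_U((1-p^e)(K_U+\Delta))$ and the associated map $\phi_\Delta:F^e_*\mathcal L_{e,\Delta}\to\mathcal O_U$ of the map pair $(U,\phi_\Delta)$. The basic input is the divisor--map dictionary already recorded in the ``Map pairs'' definition, now used on an arbitrary normal variety: for a normal scheme $V$, an $e\geq 1$, and a rank-one reflexive sheaf $\mathcal N$ on $V$, nonzero $\mathcal O_V$-linear maps $F^e_*\mathcal N\to\mathcal O_V$ taken up to multiplication by a global unit are in natural bijection with effective $\mathbb Q$-divisors $D$ on $V$ such that $(p^e-1)D$ is integral, $(p^e-1)(K_V+D)$ is Cartier, and $\mathcal N\cong\mathcal O_V((1-p^e)(K_V+D))$; under this bijection $\phi_\Delta$ corresponds to $\Delta$ itself. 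This dictionary is compatible with iterating Frobenius (the map attached to $\Delta$ for $2e$ is the $e$-fold ``twisted composite'' of the one for $e$), which is what will force the output to be independent of the auxiliary $e$.

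Next I would carry out the restriction. By hypothesis the ideal sheaf $\mathcal I_W$ of $W$ (or of the closed subscheme whose normalization is $W$) is locally a center of sharp F-purity, so locally $\mathcal I_W$ is a radical $\phi_\Delta$-compatible ideal, i.e. $\phi_\Delta\bigl(F^e_*(\mathcal I_W\mathcal L_{e,\Delta})\bigr)\subseteq\mathcal I_W$. Hence $\phi_\Delta$ descends to an $\mathcal O_X/\mathcal I_W$-linear map on the quotient, and this descended map is nonzero precisely because $(X,\Delta)$ is sharply F-pure at the generic point of $W$. When $W$ equals $V(\mathcal I_W)$ and is already normal, reflexifying the relevant rank-one sheaf turns the descended map into a nonzero map $\phi_W:F^e_*\mathcal O_W\bigl((1-p^e)(K_X+\Delta)|_W\bigr)\to\mathcal O_W$; when $W$ is instead the normalization of $V(\mathcal I_W)$, one first uses the compatibility of Cartier-linear maps with normalization to lift the descended map to $W$. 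Applying the dictionary on the normal scheme $W$ to $\phi_W$ produces an effective $\mathbb Q$-divisor, call it $\Delta_W$, with $(p^e-1)\Delta_W$ integral, $(p^e-1)(K_W+\Delta_W)$ Cartier, and $\mathcal O_W((1-p^e)(K_W+\Delta_W))\cong\mathcal O_W((1-p^e)(K_X+\Delta)|_W)$; dividing the induced linear equivalence by $1-p^e$ gives $K_W+\Delta_W\sim_{\mathbb Q}(K_X+\Delta)|_W$. This establishes (1) and (2). Canonicity of $\Delta_W$ — independence of the affine chart and of $e$ — follows from the naturality of the dictionary together with the Frobenius-power compatibility noted above, and lets the local divisors glue to a global $\Delta_W$.

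For (3) and (4) I would analyse compatible ideals. The restricted map $\phi_W$ has its own collection of $\phi_W$-compatible radical ideals on $W$, and the technical lemma is that these are exactly the images in $\mathcal O_W=\mathcal O_X/\mathcal I_W$ of the $\phi_\Delta$-compatible radical ideals of $\mathcal O_X$ that are contained in $\mathcal I_W$ (this uses that an ideal contained in a $\phi_\Delta$-compatible ideal is $\phi_\Delta$-compatible iff its image downstairs is $\phi_W$-compatible). Consequently the centers of sharp F-purity of $(W,\Delta_W)$ are in natural bijection with the centers of sharp F-purity of $(X,\Delta)$ that are properly contained in $W$, which is (4). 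Part (3) then drops out: $(W,\Delta_W)$ is strongly F-regular exactly when $\phi_W$ admits no nontrivial proper compatible ideal, i.e. $\tau(W,\Delta_W)=\mathcal O_W$, and by the bijection this holds precisely when $(X,\Delta)$ has no center of sharp F-purity strictly inside $W$, i.e. when $\mathcal I_W$ has maximal height among centers of sharp F-purity, i.e. $W$ is minimal.

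The main obstacle is the divisor--map dictionary together with its compatibility with restriction along a $\phi$-compatible subscheme and with normalization: one must track rank-one reflexive sheaves through the quotient by $\mathcal I_W$ in a setting where $K_X+\Delta$ is only $\mathbb Q$-Cartier, verify that the descended map really is of the shape coming from the canonical-divisor identification on the normal scheme $W$ (and, in the non-normal case, that it extends across the normalization), and confirm that the resulting $\Delta_W$ does not depend on the chosen $e$. Everything else — the descent of $\phi_\Delta$ modulo a compatible ideal, the nonvanishing at the generic point of $W$ granted by sharp F-purity there, and the bookkeeping of compatible ideals — is formal once this dictionary is in place.
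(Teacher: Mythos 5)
The paper does not prove this statement: it is quoted verbatim as a citation to Schwede's work (\cite{schwede2009fadjunction}, Theorems 5.2 and 6.17), so there is no ``paper's own proof'' to compare against. Your sketch correctly captures Schwede's actual strategy — the bijection between Cartier-linear maps $F^e_*\mathcal L\to\mathcal O$ (up to unit) and effective $\mathbb Q$-divisors $\Delta$ with $(p^e-1)\Delta$ integral and $(p^e-1)(K+\Delta)$ Cartier, the descent of $\phi_\Delta$ modulo a $\phi_\Delta$-compatible ideal, extension of the descended map across normalization, the well-definedness over the choice of $e$ via Frobenius iteration, and the bijection on compatible radical ideals yielding parts (3) and (4). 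One small slip: in your discussion of (4) you write that the $\phi_W$-compatible radical ideals of $\mathcal O_W$ are images of $\phi_\Delta$-compatible radical ideals \emph{contained in} $\mathcal I_W$; you mean ideals \emph{containing} $\mathcal I_W$ (so that the corresponding subschemes are contained in $W$ and have a nonzero image in $\mathcal O_X/\mathcal I_W$). With that correction the outline is a faithful summary of the reference.
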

    The theorem predicts that a similar phenomenon as general adjunctions for MMP singularities would happen under the setting of the F-singularities. F-adjunctions would be useful for us to construct sections in the proof of effective birationality in the next chapter.
\section{A weak version of effective birationality of Fano varieties}
 To see birationality of some linear system, the first observation is that F-pure centers of a single point (bad singularities) can give rise to sections separating points:
\begin{theoremdef}[F-potentially birational divisors]
    A big $\mathbb Q$-Cartier $\mathbb Q$-divisor $D$ on a variety $X$ (admitting a resolution of singularity) is called F-potentially birational, if for any two general points $x$ and $y$ on $X$, possibly switching $x$ and $y$, one may find some $0\leq\Delta\sim_\mathbb Q (1-\epsilon)D$ such that $(X,\Delta)$ is F-pure near $x$ but not F-regular at $y$ and $\{x\}$ is an isolated F-pure center, and the Cartier index of $K_X+\Delta$ is not divisible by $p$ near $x$. 

    Suppose $D$ is an F-potentially birational divisor on $X$, then $\phi_{K_X+\lceil D\rceil}$ defines a birational map.
\end{theoremdef}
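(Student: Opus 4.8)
The plan is to carry over the ``potentially birational'' argument of \cite{birkar2019antipluricanonicalsystemsfanovarieties} to positive characteristic, replacing the non-klt locus by the non-$F$-regular locus $V(\tau)$ and Nadel vanishing by the Frobenius-theoretic vanishing for test ideals. First I would reduce to a statement about two general points: it suffices to show that for two general closed points $x\neq y$ the linear system $|K_X+\lceil D\rceil|$ contains a member through $y$ but not through $x$ (this also forces general points to lie outside the base locus), since then $\phi_{K_X+\lceil D\rceil}$ is defined and injective on a dense open set, hence birational onto its image. Fix general $x,y$. Because $D$ is big, after possibly switching $x$ and $y$ the definition gives an effective $\mathbb Q$-divisor $\Delta\sim_\mathbb Q(1-\epsilon)D$ with $(X,\Delta)$ sharply $F$-pure near $x$ and not strongly $F$-regular at $y$, with $\{x\}$ an isolated center of sharp $F$-purity, and with $K_X+\Delta$ of Cartier index prime to $p$ near $x$, so that the map pair $\phi_\Delta$ of Definition~3.1 exists there.

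Next I would extract the local structure at $x$. Since $\{x\}$ is an isolated center of sharp $F$-purity, some neighbourhood of $x$ meets no center of sharp $F$-purity other than $\{x\}$; as the $\phi_\Delta$-compatible ideals are precisely the ideal sheaves of the centers of sharp $F$-purity and $\tau(X,\Delta)$ is the smallest of them (Definition~3.2), while $(X,\Delta)$ is not strongly $F$-regular at $x$, we get $\tau(X,\Delta)=\mathcal I_{\{x\}}$ near $x$; in particular $\{x\}$ is a reduced point which is isolated, hence a clopen subscheme, in $V:=V(\tau(X,\Delta))$, while $y\in V$ because $(X,\Delta)$ is not strongly $F$-regular at $y$. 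Write $V=\{x\}\sqcup V_1$ as a disjoint union of closed subschemes, so that $y\in V_1$. Here the quoted $F$-adjunction theorem \cite{schwede2009fadjunction} is what guarantees that, being a minimal center, $\{x\}$ really is the reduced point and that restriction to it behaves well.

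Then I would make the adjoint divisor ample and lift a section. Put $L:=K_X+\lceil D\rceil$, so $L-(K_X+\Delta)=\lceil D\rceil-\Delta\sim_\mathbb Q(\lceil D\rceil-D)+\epsilon D$ is big; choose an effective $\mathbb Q$-divisor $E\sim_\mathbb Q(\lceil D\rceil-\Delta)-A$ for some ample $\mathbb Q$-divisor $A$ with $x,y\notin\Supp E$, and set $\Delta_+:=\Delta+E$, so $L-(K_X+\Delta_+)\sim_\mathbb Q A$ is ample. Since $E$ avoids $x$ and $y$, near $x$ we still have $\tau(X,\Delta_+)=\mathcal I_{\{x\}}$ and $y\in V(\tau(X,\Delta_+))$, so I replace $V$ by $V(\tau(X,\Delta_+))=\{x\}\sqcup V_1'$ with $y\in V_1'$ (the locus may grow away from $x$, which does no harm). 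Now the characteristic-$p$ Nadel-type vanishing for test ideals applies: because $L-(K_X+\Delta_+)$ is ample and the Frobenius is affine, so $H^i(X,F^e_*(-))=H^i(X,-)$, and since $\tau(X,\Delta_+)\otimes\mathcal O_X(L)$ is the image of an iterated Frobenius-trace map whose source is twisted by $\mathcal O_X(L+(p^e-1)A)$, one reduces to Serre vanishing for $e\gg0$ and obtains $H^1\big(X,\tau(X,\Delta_+)\otimes\mathcal O_X(L)\big)=0$. The exact sequence
\[0\to\tau(X,\Delta_+)\otimes\mathcal O_X(L)\to\mathcal O_X(L)\to\mathcal O_V(L)\to0\]
then yields a surjection $H^0(X,\mathcal O_X(L))\twoheadrightarrow H^0(V,\mathcal O_V(L))=\big(\mathcal O_X(L)\otimes k(x)\big)\oplus H^0(V_1',\mathcal O_X(L)|_{V_1'})$. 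Picking $s\in H^0(X,\mathcal O_X(L))$ lifting $(1,0)$, we get $s(x)\neq0$ and, since near $y$ the section $s$ lies in $\tau(X,\Delta_+)\mathcal O_X(L)\subseteq\mathfrak m_y\mathcal O_X(L)$, also $s(y)=0$; hence $\{s=0\}\in|K_X+\lceil D\rceil|$ separates $x$ and $y$, and running this for general $x,y$ proves that $\phi_{K_X+\lceil D\rceil}$ is birational onto its image.

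The main obstacle is the vanishing $H^1(X,\tau(X,\Delta_+)\otimes\mathcal O_X(L))=0$ together with the identification of $\tau(X,\Delta_+)\otimes\mathcal O_X(L)$ as a limit of images of suitably twisted Frobenius-trace maps; this is the positive-characteristic replacement for Nadel vanishing, and it is exactly where the hypotheses ``$D$ big'' (to make $L-(K_X+\Delta_+)$ ample) and ``index of $K_X+\Delta$ prime to $p$ near $x$'' (so that the map pair $\phi_\Delta$, hence the whole test-ideal and $F$-adjunction package, is available) are used. A secondary, purely local, point is the claim that an isolated center of sharp $F$-purity equal to a point forces $\tau(X,\Delta)$ to be the reduced maximal ideal there; I would deduce this from the quoted $F$-adjunction theorem and the radicality of $\phi_\Delta$-compatible ideals. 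I do not expect the $\mathbb Q$-factoriality bookkeeping around $\lceil D\rceil$ to be a genuine difficulty, since in the intended applications $X$ is $\mathbb Q$-factorial.
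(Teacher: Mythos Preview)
Your proposal is correct and reaches the same conclusion, but it takes a noticeably more direct route than the paper. The paper spends most of its proof on a tie-breaking construction: it stratifies $X$ along F-pure centers, applies F-adjunction on each stratum, and uses openness of strong F-regularity in families to produce a carefully chosen $D'_\epsilon\sim_{\mathbb Q}\epsilon B+\lceil D\rceil-D$ so that $(X,\Delta+D'_\epsilon)$ remains sharply F-pure at $x$ with $\{x\}$ still isolated among centers. You bypass this entirely by observing that, since $x$ and $y$ are general and the class $\lceil D\rceil-\Delta$ is big, one can write it as ample plus an effective $E$ whose support misses both points; then $(X,\Delta+E)$ agrees with $(X,\Delta)$ near $x$ and $y$, so the local structure is preserved for free. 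This is genuinely simpler, and the tie-breaking machinery (while standard and reused later in the paper's Lemma~4.3) is not needed for this particular statement. On the lifting side, the paper invokes the $T^0$ surjectivity statement of Blickle--Schwede--Tucker, concluding that the image of the restriction $\gamma$ contains $T^0(Z,D_Z)$ and then using that $T^0$ of a reduced point is the whole residue field; you instead claim the full vanishing $H^1(X,\tau(X,\Delta_+)\otimes\mathcal O_X(L))=0$ and deduce surjectivity of $\gamma$ outright. Your version is cleaner when it applies, but be aware that the standard Nadel-type vanishing for test ideals is usually stated under the hypothesis that $K_X+\Delta_+$ has index prime to $p$ \emph{globally}, whereas the definition here only imposes this near $x$; the $T^0$ formulation the paper uses is somewhat more robust on this point. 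In the intended applications $X$ has $IK_X$ Cartier with $p\nmid I$, so this is not a real obstruction, but it is worth making the global index assumption (or a suitable perturbation) explicit.
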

\begin{proof}
     Since $D$ is big, we may write $D\sim_\mathbb QA+B$ for some ample divisor $A$ and effective divisor $B$. Set $D_\epsilon=\epsilon B+\lceil D\rceil -D$ for some $\epsilon\in\mathbb Z_{(p)}$ with $0<\epsilon\ll1$, then the following argument with the same strategy to do some tie breaking as in \cite{wang2018characterizationabelianvarietieslog} and\cite{tanaka2017semiampleperturbationslogcanonical} will give some $0\leq D'_\epsilon\sim_\mathbb Q D_\epsilon$ such that $(X,\Delta+D'_\epsilon)$ is strongly F-regular in a punctured neighbourhood of $x$ for $x\notin Bs(|D_\epsilon|_\mathbb Q)$:
    
    After shrinking near $x$, we may assume $(X,\Delta)$ is sharply F-pure. Set $W_i^*(X,\Delta)$ to be the union of non-F-regular centers of dimension $\leq i$, set $W_i(X,\Delta)=W_i^*(X,\Delta)-W_{i-1}^*(X,\Delta)$. Then every component $W$ of $W_i(X,\Delta)$ is a minimal F-pure center of the sharply F-pure pair $(X-(\bar W-W),W)$, hence normal by \cite{schwede2009centersfpurity}. Set $X=U\cup\bigcup_{i,k}W_i^k$ a stratification along irreducible F-pure centers, then for each $W_i^k$, we have an F-adjunction formula $K_{W_i^k}+\Delta_{W_i^k}=(K_X+\Delta)|_{W_i^k}$ by Theorem 3.8, such that $(W_i^k,\Delta_{W_i^k})$ is a strongly F-regular pair. Also, $(U,\Delta_{U})$ is strongly F-regular. Take $m$ not divisible by $p$, which is large enough such that $mD_\epsilon$ is Cartier and $Bs(|mD_\epsilon|)$ is the stable base locus of $D_\epsilon$.

    One sees if we take $\Gamma\in |mD_\epsilon|$ does not contain any F-pure center, there is $e_0$ such that for any $e\geq e_0$, all $(W_i^k,\Delta_{W_i^k}+\frac{d}{m(p^e-1)}\Gamma|_{W_i^k})$ and $(U,\Delta_{U}+\frac{d}{m(p^e-1)}\Gamma|_{U})$ are strongly F-regular. Now consider the map $W_i^k\times |mD_\epsilon|^d\to X\times |mD_\epsilon|^d\to |mD_\epsilon|^d$, applying \cite[4.19]{patakfalvi2017fsingularitiesfamilies}, one sees that there is an open neighbourhood $V$ of $\Gamma\in|mD_\epsilon|$ such that for any $(D_1,D_2,\cdots,D_d)\in V^d\subset |mD_\epsilon|^d$, $(W_i^k,\Delta_{W_i^k}+\frac{1}{m(p^e-1)}(\sum\limits_{i=1}^dD_i)|_{W_i^k})$ and $(U,\Delta_{U}+\frac{1}{m(p^e-1)}(\sum\limits_{i=1}^dD_i)|_{U})$ they are all strongly F-regular. Now take $D'_\epsilon=\frac{1}{m(p^e-1)}\sum\limits_{i=1}^{p^e-1}D_i$ where $D_i\in V$ are general elements for $i\in \{1,2,\cdots,p^e-1\}$ such that the intersection of any distinct $d+1$ $D_i$'s  is the base locus $Bs(|mD_\epsilon|)$. Then near each point $y\notin Bs(|D_\epsilon|_\mathbb Q)$, say $y\in W\in\{U,W_i^k\}$, there are at most $d$ of $D_i$'s  passing through $y$, so $(W,\Delta_W+\frac{1}{m(p^e-1)}(\sum\limits_{i=1}^{p^e-1}D_i)|_W)$ is strongly F-regular near $y$. Since $x\notin Bs(|mD_\epsilon|)$, after proper shrinking, $(X,\Delta+D'_\epsilon)$ is again F-pure near $x$ and $\{x\}$ is an isolated F-pure center.  
    
    Now come back to our problem, consider $\lceil D\rceil-(\Delta+D'_\epsilon)\sim_\mathbb Q \epsilon A$, which is ample. Set $Z\subset X$ to be the (normalized) non-F-regular center of $(X,\Delta+D'_\epsilon)$, with the defining ideal $I_Z=\tau(X,\Delta+D'_\epsilon)$ the test ideal (recall that the test ideal $\tau(X,B)$ of the pair $(X,B)$ is the smallest nonzero ideal $J\subset \mathcal O_X$ such that $\phi(F^e_*J)\subset J$ for any local section $\phi\in\mathscr{H}om_{\mathcal O_X}(F^e_*\mathcal O_X(\lceil (p^e-1)B\rceil),\mathcal O_X)$ and all $e>0$). Consider the following exact sequence:
    \[\begin{aligned}0&\to H^0(X,\mathcal O_X(K_X+\lceil D\rceil))\otimes I_Z)\to H^0(X,\mathcal O_X(K_X+\lceil D\rceil))\xrightarrow{\gamma} H^0(Z,(K_X+\lceil D\rceil)|_Z)\\&\to H^1(X,\mathcal O_X(K_X+\lceil D\rceil))\otimes I_Z)\to\cdots
    \end{aligned}\]
    Set $K_Z+D_Z=(K_X+\lceil D\rceil)|_Z$ by the general F-adjunction, by F-version of Nadel vanishing using test ideals \cite[5.5]{blickle2014fsingularitiesalterations}, one sees that the image of $\gamma$ contains 
    \[T^0(Z,D_Z)=\bigcap\limits_{f:Y\to Z \text{ finite dominant maps}}Image(H^0(Y,\lceil K_Y+f^*D_Z\rceil)\xrightarrow {Tr_f} H^0(Z,K_Z+D_Z))\]
    where $Y$ is a normal equidimensional variety and $Tr_f$ is the trace map (of the lowest cohomology) induced by $Tr_f:Rf_*\omega^\cdot_Y\to \omega^\cdot_Z$, which is dual to the structure map $\mathcal O_Z\to f_*\mathcal O_Y$. Since the point $\{x\}\subset Z$ is a connected component and $y\in Z$, there is therefore a section in $H^0(X,\mathcal O_X(K_X+\lceil D\rceil))$ which is not zero at $x$ but vanishes at $y$. Similarly $y$ can be taken very generally so that there is a section in $H^0(X,\mathcal O_X(K_X+\lceil D\rceil))$ which does not vanish at $y$. Hence, the sections of $K_X+\lceil D\rceil$ separate points, which means that $\phi_{K_X+\lceil D\rceil}$ is birational.
\end{proof}
\begin{remark}
    If $(X,\Delta)$ is lc at $x$ and not klt at $y$, where $x,y$ are in the smooth locus of $X$ and away from the stable base locus of $\Delta$, then the pair is at best sharply F-pure near $x,y$. One can always find $\Delta'\sim_\mathbb Q\Delta$ such that $(X,\Delta')$ is strongly F-regular at $x$ and $y$. So there is $\theta\in[0,1]$ such that $(X,\Delta_\theta=\theta\Delta'+(1-\theta)\Delta)$ is exactly F-pure at $x$ but not F-regular at $y$ after some possible switching of $x$ and $y$ by the following Lemma 4.2. However, these do not mean that a potentially birational divisor is an F-potentially birational divisor, since the F-pure center of $(X,\Delta_\theta)$ near $x$ may not be $\{x\}$.

    Indeed combine the statements above and \cite[7.1]{hacon2012acclogcanonicalthresholds} and some tie breaking arguments, we have if $D$ is an ample $\mathbb Q$-divisor with $vol(D)\geq (2d)^d$, then there is a bounded covering family $\{G_i\}$ of subvarieties of $X$ such that for any $x,y\in X$ general closed points, there is some $\Delta\sim_\mathbb Q D$ such that $(X,\Delta)$ is F-pure at $x$ with a unique F-pure center $G_i$ containing $x$ and not F-regular at $y$.
\end{remark}
The following statements are crucial for cutting the F-pure centers.
\begin{lemma}
    Assume $(X,B)$ is a strongly F-regular pair and $x\in X$ a closed point. Suppose $D$ is an effective $\mathbb Z_{(p)}$-Cartier $\mathbb Q$-divisor on $X$ such that $(X,B+D)$ is sharply F-pure but not strongly F-regular at $x$, with the non-F-regular center $Z\cup Z'$, where $Z$ is the unique component containing $x$ with the generic point $\xi$. Assume $\Delta$ is an effective $\mathbb Z_{(p)}$-Cartier $\mathbb Q$-divisor that does not pass through $\xi$, then $(X,B+D+\Delta)$ is sharply F-pure at $\xi$. Moreover, for any $0<\epsilon\in\mathbb Z_{p}$, we have that $(X,B+D+\epsilon D)$ is not sharply F-pure at $\xi$. 
\end{lemma}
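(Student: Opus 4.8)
The plan is to treat the two assertions separately; both are local questions at the generic point $\xi$ of $Z$. For the first, observe that since $\xi\notin\Supp(\Delta)$ there is an open neighbourhood $U\ni\xi$ on which $\Delta|_U=0$, so near $\xi$ the pair $(X,B+D+\Delta)$ — together with the sheaves $F^{ne}_*\mathcal O_X((p^{ne}-1)(B+D+\Delta))$ and the structure maps — agrees with $(X,B+D)$. Hence it suffices to know that $(X,B+D)$ is sharply F-pure at $\xi$. But $(X,B+D)$ is sharply F-pure at $x$ by hypothesis, and since $x\in\overline{\{\xi\}}=Z$ the local ring $\mathcal O_{X,\xi}$ is a localization of $\mathcal O_{X,x}$; as a splitting of $\mathcal O_X\to F^{ne}_*\mathcal O_X((p^{ne}-1)(B+D))$ at $x$ localizes to one at $\xi$, the pair $(X,B+D)$, hence $(X,B+D+\Delta)$, is sharply F-pure at $\xi$.

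For the "moreover" part I would first record two facts. (i) $D$ passes through $\xi$: away from $\Supp(D)$ the pair $(X,B+D)$ restricts to the strongly F-regular pair $(X,B)$, so its non-F-regular center is contained in $\Supp(D)$; since $\xi\in Z$ and $Z$ is a component of that center, $\xi\in\Supp(D)$, so near $\xi$ the divisor $D$ is a nonzero effective $\mathbb Z_{(p)}$-Cartier divisor. (ii) As in the first part $(X,B+D)$ is sharply F-pure at $\xi$, while it is \emph{not} strongly F-regular at $\xi$, since $\xi$ lies in the non-F-regular center $Z\cup Z'$. Now suppose, for contradiction, that $(X,B+D+\epsilon D)=(X,B+D')$ with $D':=(1+\epsilon)D$ is sharply F-pure at $\xi$, and put $s:=\tfrac{1}{1+\epsilon}\in(0,1)$, so that $B+sD'=B+D$ is $\mathbb Z_{(p)}$-log Gorenstein (its index is prime to $p$ since that of $K_X+B$ and of $K_X+B+D$ is). The plan is to show that this forces $(X,B+D)$ to be strongly F-regular at $\xi$, contradicting (ii): this is the interpolation/convexity principle for F-singularities, applied to the strongly F-regular pair $(X,B)$ and the sharply F-pure pair $(X,B+D')$.

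Concretely, fix $e_1$ admitting a splitting $\phi$ of $\mathcal O_X\to F^{e_1}_*\mathcal O_X((p^{e_1}-1)(B+D'))$ near $\xi$; for an arbitrary effective Cartier divisor $C$ near $\xi$, strong F-regularity of $(X,B)$ at $\xi$ provides $e_2$ and a splitting $\psi$ of $\mathcal O_X\to F^{e_2}_*\mathcal O_X((p^{e_2}-1)B+C)$ near $\xi$. Composing a suitable twist of the $F^{e}$-pushforward of $\psi$ (with $e$ a large multiple of $e_1$ for which $(p^e-1)D'$ is integral) with $\phi$ produces a splitting near $\xi$ of $\mathcal O_X\to F^{e+e_2}_*\mathcal O_X\big((p^{e+e_2}-1)B+(p^{e+e_2}-p^{e_2})D'+C\big)$; since $\tfrac{p^{e+e_2}-p^{e_2}}{p^{e+e_2}-1}\to 1>s$ as $e\to\infty$, for $e\gg 0$ this divisor dominates $\lceil(p^{e+e_2}-1)(B+sD')\rceil+C$, and restricting the splitting to the corresponding subsheaf shows $(X,B+sD')=(X,B+D)$ is strongly F-regular at $\xi$. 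I expect the main obstacle to be the bookkeeping in this last step: composing the two Frobenius splittings in the correct order, keeping track of the round-ups and of the $\mathbb Z_{(p)}$-Cartier indices (all available precisely because $sD'=D$), and checking that the ever-growing multiple $(1-s)(p^{e}-1)D'$ of $D'$, which is nonzero at $\xi$ by (i), eventually absorbs the fixed divisor $C$. Alternatively, one may simply quote the interpolation of F-singularities from the literature and omit the explicit splittings.
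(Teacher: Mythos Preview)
Your proposal is correct and follows essentially the same approach as the paper. For the first assertion both of you localize at $\xi$ and use that $\Delta$ vanishes there so the splitting for $(X,B+D)$ at $\xi$ (obtained by localizing the one at $x$) serves for $(X,B+D+\Delta)$; for the second assertion both of you observe $\xi\in\Supp(D)$ and then invoke the interpolation principle (sharply F-pure for a strictly larger coefficient forces strong F-regularity at the smaller one), the only difference being that the paper simply cites \cite[2.15]{Schwede_2013} for this step whereas you spell out the splitting-composition argument before noting it can be quoted.
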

\begin{proof}
    We may assume $X=Spec(R)$ since the problem is local. Consider the map of $R_\xi$-modules:
    \[R_\xi\to F^e_*R_\xi(\lceil (p^e-1)(B+D)\rceil)\xrightarrow{F^e_*(i\otimes R_\xi(\lceil (p^e-1)(B+D)\rceil))} F^e_*R_\xi(\lceil (p^e-1)(B+D+\Delta)\rceil)\]
    Where $(p^e-1)\Delta$ is Cartier. The first morphism splits by sharp F-purity and the second is an isomorphism since $i:R_\xi\to R_\xi((p^e-1)\Delta)$ is an isomorphism. By a similar reason for the strongly F-regular pair $(X,B)$, one sees that $D$ passes through $\xi$. If $(X,B+D+\epsilon D)$ is sharply F-pure near $\xi$, then by \cite[2.15]{Schwede_2013}, $(X,B+D)$ is strongly F-regular near $\xi$, which is a contradiction.
\end{proof}
\begin{lemma}
    Assume $(X,B)$ is a strongly F-regular pair and $x\in X$ a general closed point. Suppose $D$ is an effective $\mathbb Z_{(p)}$-Cartier $\mathbb Q$-divisor on $X$ such that $(X,B+D)$ is sharply F-pure but not strongly F-regular at $x$, with the non-F-regular center $Z\cup Z'$, where $Z$ is the unique component containing $x$ with the generic point $\xi$ and dimension $k$. Assume $H$ is an ample $\mathbb Z_{(p)}$-Cartier $\mathbb Q$-divisor with $H^k\cdot Z>k^k$, then there is an effective $\mathbb Q$-divisor $A\sim_\mathbb Q H$ and $0<\delta\ll1$ and $0<c<1$ such that:
    \begin{enumerate}
        \item $(X,B+(1-\delta)D+cA)$ is sharply F-pure at $x$ while not strongly F-regular.
        \item $(X,B+(1-\delta)D+cA)$ has non-F-regular center $Z_1\cup Z_1'$, where $Z_1$ is the unique component containing $x$ whose dimension is smaller than that of $Z$.
    \end{enumerate}
    Moreover, if $(X,B+D)$ is not strongly F-regular at $y$ for some general point $y$, then we can choose $(X,B+(1-\delta)D+cA)$ such that it is not strongly F-regular at $y$.
\end{lemma}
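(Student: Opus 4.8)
The plan is to transfer the problem onto the normal variety $Z$ by F-adjunction, use the hypothesis $H^k\cdot Z>k^k$ to manufacture on $Z$ a divisor of multiplicity $>k$ at $x$, lift it to a divisor $A\sim_\mathbb Q H$ on $X$ via Serre vanishing, and then select $\delta$ and $c$ by a threshold (tie-breaking) argument so that $(X,B+(1-\delta)D+cA)$ sits exactly on the sharply-F-pure / strongly-F-regular boundary at $x$ with a center of dimension $<k$ there. This parallels the characteristic-zero procedure for cutting non-klt centers (as in \cite{birkar2019antipluricanonicalsystemsfanovarieties}), with Theorem 3.8 playing the role of divisorial and fibre-space adjunction.

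First I would localise and adjoin. Because $x$ is a general point of $Z$ and $Z$ is the only component of the non-strongly-F-regular locus of $(X,B+D)$ through $x$, after shrinking $X$ to a neighbourhood of $x$ (equivalently of the generic point $\xi$ of $Z$) the subscheme $Z$ is a minimal center of sharp F-purity for $(X,B+D)$; in particular it is normal there, and Theorem 3.8 yields a canonically determined effective $\mathbb Q$-divisor $\Delta_Z$ with $(Z,\Delta_Z)$ strongly F-regular and $K_Z+\Delta_Z\sim_\mathbb Q(K_X+B+D)|_Z$ (the relevant Cartier indices being prime to $p$). Note $\dim Z=k$ and, being a general closed point of a variety over an algebraically closed field, $x$ is a smooth point of $Z$. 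Since $H$ is ample, $\mathrm{vol}_Z(H|_Z)\ge(H|_Z)^k=H^k\cdot Z>k^k=(\dim Z)^{\dim Z}$, so the usual Riemann--Roch / jet-separation estimate produces, for $m\gg0$, a nonzero $s\in H^0(Z,\mathcal O_Z(mH|_Z))$ vanishing to order $>mk$ at $x$. By Serre vanishing (valid in every characteristic), applied to the ideal sheaf of $Z$, the map $H^0(X,\mathcal O_X(mH))\to H^0(Z,\mathcal O_Z(mH|_Z))$ is surjective for $m\gg0$, so $s$ lifts to $\tilde G\in|mH|$ with $Z\not\subset\Supp\tilde G$ and $\tilde G|_Z=\divi(s)$. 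Put $A:=\tfrac1m\tilde G\sim_\mathbb Q H$: then $A\ge0$, $A|_Z$ is a genuine divisor with $\mult_x(A|_Z)>k$, and $A$ misses $\xi$ (an effective divisor on $Z$ avoids its generic point).

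Now I would choose the constants. For every $\delta>0$ the pair $(X,B+(1-\delta)D)$ is strongly F-regular at $\xi$: $(X,B)$ is strongly F-regular while $(X,B+D)$ is sharply F-pure but not strongly F-regular at $\xi$, so the F-pure threshold of $D$ over $(X,B)$ at $\xi$ is exactly $1$ (combine sharp F-purity of $(X,B+D)$ with the last assertion of Lemma 4.2), whence $1-\delta<1$ forces strong F-regularity there, and this survives adding $cA$ since $A$ misses $\xi$. Hence $Z$ ceases to be a center of the new pair, and — $x$ being general in $Z$ — its non-strongly-F-regular locus near $x$ is contained in a proper closed subset of $Z$, so its component through $x$ has dimension $<k$. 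On the other hand, at $\delta=0$ the pair $(X,B+D+sA)$ still has $Z$ as a minimal center, with F-adjunction $(Z,\Delta_Z+sA|_Z)$; since $\mult_x(A|_Z)>k=\dim Z$, $x$ is smooth on $Z$, and $(Z,\Delta_Z)$ is strongly F-regular, the F-pure threshold $c_0$ of $A|_Z$ at $x$ over $(Z,\Delta_Z)$ lies in $(0,1)$ and $(Z,\Delta_Z+c_0A|_Z)$ is sharply F-pure but not strongly F-regular at $x$, with a center through $x$ of dimension $<k$ (made unique by choosing $s$ general among the admissible sections). Feeding this back through F-adjunction and its inversion, and letting $\delta\searrow0$ while taking $c=c^*(\delta):=\sup\{s:(X,B+(1-\delta)D+sA)\text{ is sharply F-pure at }x\}$ — which stays in $(0,1)$ for $\delta$ small, tending to $c_0<1$ — the pair $(X,B+(1-\delta)D+cA)$ becomes sharply F-pure but not strongly F-regular at $x$, with a unique non-strongly-F-regular component $Z_1\ni x$ of dimension $<k$; this is (1) and (2). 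For the last clause, if $(X,B+D)$ fails to be strongly F-regular at a general point $y$, I would in addition impose on $\tilde G$ a vanishing condition at $y$ — possible for $m\gg0$ because $y\notin Z$ and the sections of $mH$ restricting to the chosen divisor on $Z$ still have full leading term — so that $A$ acquires enough multiplicity at $y$; with $\delta$ small the failure of strong F-regularity at $y$ persists.

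The main obstacle is this final tuning of $\delta$ and $c$: to push the center through $x$ below dimension $k$ one must make the boundary heavy enough at $x$, while to keep sharp F-purity at $x$ one must not overshoot, and in positive characteristic there is no Kodaira-type vanishing to lean on for either the lifting step or the semicontinuity of thresholds. I expect the delicate point to be bookkeeping the two competing roles of $D$ simultaneously — enough of $D$ must be retained near the new center $Z_1$ for the inversion of F-adjunction to produce $Z_1$ on $X$, while enough of $D$ must be removed near $\xi$ (where $A$ contributes nothing, since it must miss $\xi$) for $Z$ to genuinely stop being a center — together with checking that $c^*(\delta)$ remains strictly below $1$ as $\delta\to0$, which is precisely where lower semicontinuity of F-pure thresholds and inversion of F-adjunction (in the style of Schwede and Das) are needed; the bulk of the work should be a careful two-parameter tie-break carried out on $Z$, where $x$ is smooth and multiplicity bounds are available.
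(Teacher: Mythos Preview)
Your overall strategy matches the paper's: lift a high-multiplicity divisor from $Z$ to $X$ by Serre vanishing, then perform a threshold (tie-breaking) argument in the two parameters $\delta,c$ using Lemma~4.2. The main technical device you are missing is the averaging step. The paper does \emph{not} take a single lift $A=\tfrac1m\tilde G$; instead it considers the linear subsystem of $|mH|$ consisting of divisors that either contain $Z$ or restrict to the fixed $F_Z$, picks $p^e-1$ general members $F_1,\dots,F_{p^e-1}$ of that subsystem (using the same tie-breaking argument as in Theorem~4.1), and sets $A=\tfrac{1}{m(p^e-1)}\sum F_i$. The point of this averaging is precisely to guarantee that on $X\setminus Z$ the pair $(X,B+D+A)$ is sharply F-pure with the \emph{same} F-pure centers as $(X,B+D)$. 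With your single $A$ this is not clear: the lift $\tilde G$ may have uncontrolled singularities away from $Z$, so adding $cA$ could create new non-F-regular components through $x$ not contained in $Z$, and your claim that $Z_1$ is ``made unique by choosing $s$ general'' does not follow, since the generality is imposed on $Z$ rather than on $X$. Once the averaged $A$ is in hand, the paper's threshold argument is simpler than yours: one fixes $\delta$ small (so the pair becomes strongly F-regular at $\xi$ by Lemma~4.2) and then chooses $c$ so that the pair is exactly F-pure at $x$; no limit $\delta\to 0$ or continuity of $c^*(\delta)$ is needed.

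Your treatment of the ``moreover'' clause also diverges from the paper and has a gap. You assume $y\notin Z$ without justification and propose to build the vanishing at $y$ into $\tilde G$ from the start. The paper instead handles $y$ \emph{after} constructing $A$, by case analysis: if $y\in Z_1$ we are done; if $y\in Z\setminus Z_1$ one reruns the construction with the roles of $x$ and $y$ swapped to produce an auxiliary $A'\sim_\mathbb Q H$ missing $x$; if $y$ lies in some component $J$ of $Z'$ one takes $A'\sim_\mathbb Q H$ containing $J$ and missing $x$. In each case one adds $c'A'$ on top of $cA$. This separation is what lets one keep the sharp F-purity at $x$ undisturbed while forcing failure of F-regularity at $y$.
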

\begin{proof}
    Take $m>>1$ large enough such that $mH$ is Cartier and very ample enough such that $\mathcal O_X(mH)\otimes I_Z$ is globally generated and has vanished higher cohomologies by Serre's vanishing. As a result, $H^0(X,\mathcal O_X(mH))\to H^0(Z,\mathcal O_Z(mH|_Z))$ is surjective. Therefore, if we take a suitable $F_Z\sim_\mathbb Q mH$ on $Z$ such that $\text{mult}_x(F_Z)>mk$, there is $F$ on $X$ such that $F|_Z=F_Z$. Consider the linear subsystem $|F|$ of $|mH|$ consisting of those $F'$ such that $F'$ contains $Z$ or $F'|_Z=F_Z$, it is a closed subset of $|mH|$. We can take some $F''\in |F|$ such that it contains no F-pure centers of $(X,B+D)$, we see that $|F|$ is free on $X-(Z\cup Z')$. By the same cutting arguments as in the proof of Theorem 4.1, there is an open neighbourhood $V$ of $F''$ in $|F|$ such that we can take $F_1,F_2,\cdots,F_{p^e-1}\in |F|$ for $e$ large enough (possibly after enlarging $m$), which is simple normal crossing outside $Z\cup Z'$, and $(X-Z,B+D+\frac{1}{m(p^e-1)}\sum\limits_{i=1}^{p^e-1}F_i)$ is sharply F-pure and shares the same F-pure center with $(X-Z,B+D)$. 

    Now, since $\frac{1}{m(p^e-1)}\sum\limits_{i=1}^{p^e-1}F_i|_Z=\frac{1}{m}F_Z$, while $(Z,B_Z+D_Z+\frac{1}{m}F)$ is not lc at $x$ by counting multiplicity, hence $(Z,B_Z+D_Z+\frac{1}{m}F_Z)$ is not F-pure near $x$. By F-adjunction, one sees that $(X,B+D+\frac{1}{m(p^e-1)}\sum\limits_{i=1}^{p^e-1}F_i)$ is not F-pure near x. Since $\frac{1}{m(p^e-1)}\sum\limits_{i=1}^{p^e-1}F_i$ contains no F-pure center of $(X,B+D)$ by our choice,  $(X,B+D+\frac{1}{m(p^e-1)}\sum\limits_{i=1}^{p^e-1}F_i)$ is F-pure near $\xi$. Taking $0<\delta\ll1$, one sees that $(X,B+(1-\delta)D+\frac{1}{m(p^e-1)}\sum\limits_{i=1}^{p^e-1}F_i)$ is strongly F-regular near $\xi$ while not F-pure near $x$ according to Lemma 4.2, so there is some $0<c<1$ making $(X,B+(1-\delta)D+\frac{c}{m(p^e-1)}\sum\limits_{i=1}^{p^e-1}F_i)$ is exactly F-pure with a F-pure center $Z_1$ near $x$, where $\text{dim}$ $Z_1<\text{dim}$ $Z$, hence taking $A=\frac{1}{m(p^e-1)}\sum\limits_{i=1}^{p^e-1}F_i)$ and we are done.

    Finally, consider the pair $(X,B+(1-\delta)D+cA)$ near $y$. If $y\in Z_1$, then we are done. If $y\notin Z_1$ but $y\in Z$, we can run the same process above on $y$ (like we have done for $x$) and find $A'\sim_\mathbb Q H$ not passing $x$ and $c'$ so that $(X,B+(1-\delta)D+cA+c'A')$ is not F-regular near $y$. If $y\in Z'$, say in some component $J$, then $x\notin J$, take $A'\sim_\mathbb Q H$ containing $J$ that does not pass $x$ and suitable $c'$, we can see that $(X,B+(1-\delta)D+cA+c'A')$ is not F-regular near $y$ as well by freeness.
\end{proof}
The following statement is an analogue of effective birationality in characteristic $0$ case.
\begin{theorem}
    Let $d,I\in\mathbb N$ be natural numbers where $I$ is not a multiple of $p$, then there is $n,m\in\mathbb N$ depending only on $d,I$, such that if $X$ is a strongly F-regular weak Fano variety of dimension $d$ and $IK_X$ is a Cartier divisor, then $vol(-nK_X)>(2d)^d$ and $|-mK_X|$ defines a birational map.
\end{theorem}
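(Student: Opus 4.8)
The plan is to handle the two conclusions separately: the volume estimate is almost immediate from the bounded Gorenstein index, and the birationality then follows by producing, for general pairs of points, a boundary witnessing the F-potential birationality of a fixed multiple of $-K_X$ and invoking Theorem-Definition 4.1.

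\emph{Volume.} Since $IK_X$ is Cartier and $-K_X$ is nef and big, $-IK_X$ is a nef and big Cartier divisor, so $(-IK_X)^{d}$ is a positive integer and hence $\geq 1$. As $vol(-K_X)=(-K_X)^{d}$ for the normal projective variety $X$, this gives $vol(-K_X)\geq I^{-d}$, and by homogeneity $vol(-nK_X)=n^{d}\,vol(-K_X)\geq (n/I)^{d}$. Thus $n:=(2d+1)I$ (any multiple of $I$ exceeding $2dI$ works) already yields $vol(-nK_X)\geq(2d+1)^{d}>(2d)^{d}$, which is the first assertion.

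\emph{Reduction to F-potential birationality.} It suffices to find $a=a(d,I)\in\mathbb N$ divisible by $I$ such that $D:=-aK_X$ is F-potentially birational; since $I\mid a$, $D$ is an integral Cartier divisor, so $\lceil D\rceil=D$ and $K_X+\lceil D\rceil=-(a-1)K_X$, and Theorem-Definition 4.1 then makes $\phi_{-(a-1)K_X}$ birational, so $m:=a-1$ works. The Cartier index of $K_X$ divides $I$, which is prime to $p$ by hypothesis, and all boundaries we build will have coefficients in $\mathbb Z_{(p)}$, so the index of $K_X+\Delta$ near $x$ is automatically prime to $p$; that clause in the definition is therefore free. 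Hence the task is: for general closed points $x,y\in X$, possibly after switching them, to construct $0\leq\Delta\sim_\mathbb Q(1-\epsilon)(-aK_X)$ with $(X,\Delta)$ sharply F-pure at $x$, with $\{x\}$ an isolated F-pure center, and not strongly F-regular at $y$.

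\emph{Construction and cutting.} By the volume estimate and the construction recorded in the remark following Theorem-Definition 4.1 — which, though phrased for ample $D$, applies to the big divisor $-nK_X$ after the usual perturbation — we obtain a covering family $\{G_i\}$ of subvarieties of $X$, bounded in terms of $d$ and $I$ only, and, for general $x,y$, a divisor $\Delta_0\sim_\mathbb Q-nK_X$ with $(X,\Delta_0)$ sharply F-pure at $x$, possessing a unique F-pure center $G=G_i$ through $x$, and not strongly F-regular at $y$. If $\dim G=0$ we are essentially done. Otherwise, with $k=\dim G>0$, we iterate Lemma 4.3: having a pair sharply F-pure at $x$ whose F-pure center $Z$ through $x$ has dimension $k$, we choose an ample $\mathbb Z_{(p)}$-Cartier $\mathbb Q$-divisor $H\sim_\mathbb Q t(-K_X)$ with $H^{k}\cdot Z>k^{k}$ and pass to $(X,(1-\delta)(\text{boundary})+cA)$ with $A\sim_\mathbb Q H$; by Lemma 4.3 this is again sharply F-pure at $x$ with an F-pure center through $x$ of strictly smaller dimension, and by the last assertion of Lemma 4.3 still not strongly F-regular at $y$. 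Lemma 4.2 guarantees the intermediate pairs meet the hypotheses of Lemma 4.3 (in particular that after the $(1-\delta)$-scaling the pair is strongly F-regular at the generic point of the previous center), and the F-adjunction of Theorem 3.8 is used along the centers. After at most $d-1$ steps the center through $x$ is $\{x\}$, which is then isolated. Collecting coefficients, the resulting $\Delta$ satisfies $\Delta\sim_\mathbb Q\mu(-K_X)$ with $0<\mu$ bounded in terms of $d,I$; fixing a multiple $a$ of $I$ greater than this bound and setting $\epsilon:=1-\mu/a\in(0,1)$ gives $\Delta\sim_\mathbb Q(1-\epsilon)(-aK_X)$, so $-aK_X$ is F-potentially birational and the reduction is complete.

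\emph{The main obstacle.} The crux is that in each cutting step the auxiliary divisor must be a uniformly bounded multiple $t(-K_X)$ of the anticanonical class — so that $\mu$, and hence $a$, depends only on $d,I$ — while still satisfying $H^{k}\cdot Z>k^{k}$. This is where one exploits the boundedness of $\{G_i\}$ (and of the subvarieties appearing as later centers) together with the nef-and-bigness of $-K_X$: for a general member $Z$ of such a family lying through a general point, $(-IK_X)|_Z$ is nef, integral, and big (a general $Z$ avoids the non-big locus of the big divisor $-IK_X$), so $((-IK_X)|_Z)^{k}$ is a positive integer, hence $\geq 1$; consequently a bounded ample perturbation of $t(-K_X)$ meets $Z$ with $k$-th power above $k^{k}$ for $t$ depending only on $d$ and $I$. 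The remaining ingredients — staying within $\mathbb Z_{(p)}$-coefficients, the freeness of the auxiliary systems away from the current centers needed to run Lemma 4.3, the family results of \cite{patakfalvi2017fsingularitiesfamilies} and the test-ideal vanishing of \cite{blickle2014fsingularitiesalterations} underlying Theorem-Definition 4.1 and Lemma 4.3, and the exact bookkeeping of $\mu$ — are routine transcriptions to the F-singularity setting of the characteristic-zero effective birationality argument of \cite{birkar2019antipluricanonicalsystemsfanovarieties}.
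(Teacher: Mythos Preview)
Your approach coincides with the paper's: the volume bound via the Cartier index of $-IK_X$, the covering family of F-pure centers from Remark~4.1, the dimension-cutting via Lemma~4.3, and the control of the auxiliary multiplier through $(-IK_X)|_Z$ being nef Cartier with positive (hence $\geq 1$) top self-intersection are exactly the paper's steps; the paper merely packages the induction as a contradiction argument on a sequence $(X_i,m_i)$ with $k=\liminf d_i$, which is the same content phrased differently. One remark: the paper explicitly restricts the birationality argument to the Fano case, so that $H=t(-K_X)$ is honestly ample as Lemma~4.3 requires; your handling of the weak Fano case by an ``ample perturbation of $t(-K_X)$'' is not justified as written --- the perturbation term is not $\sim_\mathbb Q$ a multiple of $-K_X$, so the resulting $\Delta$ would fail to satisfy $\Delta\sim_\mathbb Q(1-\epsilon)(-aK_X)$ --- but this gap is shared by the paper rather than particular to your argument.
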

\begin{proof}
    The rough idea of the proof is to make a covering family of F-pure centers and use induction by adjunction formulas.

    For the first assertion, one sees that $-IK_X$ is a nef and big Cartier divisor, hence $vol(-IK_X)=\frac{(-IK_X)^d}{d!}\geq\frac{1}{d!}$. If $n'>2d\sqrt[d]{d!}$, one can see $vol(-n'IK_X)>(2d)^d$ as desired.

    For the second assertion, we only prove the case where $X$ is Fano. Suppose $m$ is not bounded, where $m$ is the smallest natural number such that $|-mK_X|$ defines a birational map, one can find a sequence of such tuples $(X_i,m_i)$ where $m_i\to\infty$. Keep $n$ as in the first assertion, one sees by Remark 4.1, there is a bounded covering family $\{G_i^s\}_{s\in S}$ of $X_i$ such that for general points $x,y\in X_i$, there is a $0\leq\Delta_i\sim_\mathbb Q-(n+1)K_{X_i}$ such that $(X_i,\Delta_i)$ is sharply F-pure at $x$ with a unique (normalized) F-pure center $G_i^s$ for some $s\in S$ and not strongly F-regular at $y$. Set $d_i$ to be the dimension of general $G_i^s$, consider the limit inferior of $\{d_i\}$, say $k$. 
    
    If $k=0$, then consider some $X_i$ such that $d_i=0$, and one sees that $-2(n+1)K_{X_i}$ is F-potentially birational (c.f. \cite[2.3.4]{hacon2012birationalautomorphismsvarietiesgeneral}), hence $|K_{X_i}+\lceil-2(n+1)K_{X_i}\rceil|$ defines a birational map by Theorem 4.1. Hence $m_i\leq Mn$ for some big $M$.

    If $k> 0$. Consider the smallest $l_i(=l)$ such that $vol(-lnK_{X_i}|_{G_i})>k^k$, then by Lemma 4.3, there is a replacement $\Delta'_i\sim_\mathbb Q -4lnK_{X_i}$ of $\Delta_i$ such that the dimension of the certain F-pure center decreases. As a result, $X$ has a bounded covering family $\{G_i^{'s}\}|_{s\in S'}$ such that for general points $x,y\in X_i$, there is a $0\leq\Delta'_i\sim_\mathbb Q-4lnK_{X_i}$ such that $(X_i,\Delta'_i)$ is sharply F-pure at $x$ with a unique (normalized) F-pure center $G_i^{'s}$ for some $s\in S$ and not strongly F-regular at $y$, and  $d_i'$ decreases. By Theorem 3.8 $IK_X|_G$ is again Cartier, and hence $l$ is bounded. So the statement is proved by induction on $k$.

\end{proof}
\begin{corollary}
    Assume $X$ is $\epsilon$-strongly F-regular weak Fano variety of dimension $d$ and the Gorenstein index is not divisible by $p$, then there is a constant $m=m(d,\epsilon)$ such that $|-mK_X|$ defines a birational map.
\end{corollary}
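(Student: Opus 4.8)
The plan is to reduce to Theorem~4.4: $\epsilon$-strong F-regularity in the sense of Definition~2.8 means $s(\mathcal{O}_{X,x})>\epsilon$ at every closed point, so in particular $X$ is strongly F-regular and all the standing hypotheses of Theorem~4.4 are in force; the only extra input I need is a bound on the Gorenstein index of $X$ depending on $\epsilon$ (and $d$) alone, after which the corollary follows by finiteness bookkeeping. One could try instead to use $s\le \mathrm{mld}$ from Remark~2.4 to replace the F-signature hypothesis by an $\epsilon$-klt hypothesis, but in positive characteristic a bound on the Gorenstein index of an $\epsilon$-klt weak Fano is essentially the content of BAB, so the F-signature input seems unavoidable.

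\emph{Bounding the index.} Fix a closed point $x\in X$ and let $r_x$ be the smallest positive integer with $r_xK_X$ Cartier near $x$. The index-one cover attached to a local trivialization of $r_xK_X$ is a cyclic cover $Y\to\Spec\mathcal{O}_{X,x}$ of degree $r_x$ that is étale over the regular locus, hence étale in codimension one; since strongly F-regular rings are normal and Cohen--Macaulay, purity of the branch locus lets one view this as a connected étale cover of the punctured spectrum in the appropriate sense, so it determines a surjection from the local étale fundamental group onto $\mathbb{Z}/r_x$. By the F-signature bound on that group (Carvajal-Rojas--Schwede--Tucker), its order is at most $1/s(\mathcal{O}_{X,x})<1/\epsilon$, whence $r_x\le\lfloor 1/\epsilon\rfloor$. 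As every $r_x$ divides $N:=\operatorname{lcm}(1,2,\dots,\lfloor 1/\epsilon\rfloor)$, the divisor $NK_X$ is Cartier at every point, hence Cartier, so the Gorenstein index $I_X$ of $X$ divides $N$; and $I_X$ is coprime to $p$ by hypothesis.

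\emph{Conclusion.} For each divisor $I$ of $N$, Theorem~4.4 provides an integer $m(d,I)$, independent of $p$ so long as $p\nmid I$, such that $|-m(d,I)K_X|$ defines a birational map whenever $X$ is a strongly F-regular weak Fano $d$-fold with $IK_X$ Cartier. Set $m=m(d,\epsilon):=\operatorname{lcm}\{m(d,I): I\mid N\}$, which depends only on $d$ and $\epsilon$. Applying Theorem~4.4 with $I=I_X$ shows $|-m(d,I_X)K_X|$ is birational; since $m(d,I_X)\mid m$ and birationality of $\phi_{-m_0K_X}$ is inherited by $\phi_{-mK_X}$ whenever $m_0\mid m$ (the image of $\operatorname{Sym}^{m/m_0}H^0(-m_0K_X)$ in $H^0(-mK_X)$ already cuts out a birational map, and enlarging a linear system preserves generic injectivity), $|-mK_X|$ defines a birational map, as desired.

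The step I expect to be the main obstacle is the index bound in positive characteristic: one has to justify carefully that the index-one cover yields an honest tame or étale cover of the punctured spectrum over which the F-signature estimate genuinely applies (handling the codimension $\ge 2$ singular locus via purity together with Cohen--Macaulayness of strongly F-regular rings) and to cite the F-signature bound on local fundamental groups, or equivalently on the torsion of the local class group, in exactly the form needed. Everything downstream of that bound is routine.
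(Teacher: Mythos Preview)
Your proposal is correct and follows essentially the same approach as the paper: bound the Gorenstein index using the F-signature hypothesis and then invoke Theorem~4.4. The paper's proof is a one-liner citing \cite[4.8]{Bhatt_2017} to get that $\lfloor\frac{1}{\epsilon}+1\rfloor!\,K_X$ is Cartier and then applies Theorem~4.4 directly; your version supplies the same index bound via the Carvajal-Rojas--Schwede--Tucker estimate on the local \'etale fundamental group and is more careful with the bookkeeping (taking the lcm over the finitely many admissible indices so that the resulting $m$ is genuinely independent of $p$), which is a harmless and arguably cleaner elaboration of the same idea.
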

\begin{proof}
    By \cite[4.8]{Bhatt_2017}, $[\frac{1}{\epsilon}+1]!K_X$ is Cartier, then apply Theorem 4.4 and we are done.
\end{proof}

\bibliographystyle{alpha}
\bibliography{cite}
\end{document}